\documentclass[11pt,reqno]{amsart}
\usepackage[T1]{fontenc}
\usepackage{lmodern}
\usepackage{microtype}
\usepackage{amssymb,mathtools,mathrsfs}
\usepackage{tikz-cd}
\usepackage[margin=1in]{geometry}
\usepackage{booktabs,array}
\usepackage[hidelinks]{hyperref}
\hypersetup{pdftitle={A Secondary Multiplicative Structure on Lawson Homology},pdfsubject={Lawson--Massey brackets, singular-homology-invisible torsion, and applications},pdfauthor={Wenchuan Hu}}
\numberwithin{equation}{section}
\newtheorem{theorem}{Theorem}[section]
\newtheorem{proposition}[theorem]{Proposition}
\newtheorem{lemma}[theorem]{Lemma}
\newtheorem{corollary}[theorem]{Corollary}
\theoremstyle{definition}
\newtheorem{definition}[theorem]{Definition}
\newtheorem{question}[theorem]{Question}
\theoremstyle{remark}
\newtheorem{remark}[theorem]{Remark}
\newcommand{\C}{\mathbb C}
\newcommand{\Z}{\mathbb Z}
\newcommand{\Q}{\mathbb Q}
\newcommand{\R}{\mathbb R}
\newcommand{\F}{\mathbb F}
\newcommand{\PP}{\mathbb P}
\newcommand{\CH}{\operatorname{CH}}
\newcommand{\NS}{\operatorname{NS}}
\newcommand{\Pic}{\operatorname{Pic}}

\newcommand{\cl}{\operatorname{cl}}
\newcommand{\im}{\operatorname{im}}

\newcommand{\mor}{\mathrm{mor}}
\newcommand{\sing}{\mathrm{sing}}
\newcommand{\sst}{\mathrm{sst}}
\newcommand{\alg}{\mathrm{alg}}
\newcommand{\homol}{\mathrm{hom}}

\newcommand{\an}{\mathrm{an}}

\newcommand{\MZ}{\mathsf M\Z}
\newcommand{\cA}{\mathcal A}
\newcommand{\cC}{\mathcal C}
\newcommand{\cZ}{\mathcal Z}
\newcommand{\one}{\mathbf 1}
\newcommand{\br}[1]{\langle #1\rangle}
\newcommand{\id}{\operatorname{id}}
\newcommand{\PD}{\operatorname{PD}}
\newcommand{\Map}{\operatorname{Map}}
\newcommand{\Spec}{\operatorname{Spec}}
\newcommand{\doi}[1]{\href{https://doi.org/#1}{doi:\nolinkurl{#1}}}
\newcommand{\arxiv}[1]{\href{https://arxiv.org/abs/#1}{arXiv:\nolinkurl{#1}}}
\allowdisplaybreaks[2]
\title[Secondary structure on Lawson homology]{A Secondary Multiplicative Structure on Lawson Homology}
\author{Wenchuan Hu}
\address{School of Mathematics, Sichuan University, Chengdu 610064, China}
\email{huwenchuan@gmail.com, wenchuan@scu.edu.cn}
\date{\today}
\subjclass[2020]{Primary 14C25; Secondary 14F43, 55S30, 14C15, 14J28}
\keywords{Lawson homology, morphic cohomology, Massey product, Toda bracket, Griffiths group, higher Chow group, torsion algebraic cycle, nonformality}
\begin{document}
\begin{abstract}
We introduce a new secondary operation on Lawson homology of smooth complex projective varieties. It refines the Lawson intersection product by retaining null-homotopy data that are invisible at the level of the graded Lawson intersection algebra. Intrinsically, the operation is the Toda bracket in multiplicative morphic cohomology transported through Friedlander--Lawson duality; in an associative cochain model it is represented by the classical Massey formula. For every defined triple, the affine bracket modulo its full indeterminacy gives a well-defined secondary invariant.

The new structure is genuinely nontrivial and strictly finer than its singular-homology realization. On a smooth projective eightfold we construct integral Lawson--Massey brackets whose quotient classes are nonzero, while specified values have exact order two and map to zero under the Lawson cycle map to singular homology. A square-zero perturbation makes all three associated singular-homology classes nonzero without changing the secondary values. Infinitely many such values remain independent even after quotienting by the sum of all their indeterminacies, over one fixed singular-homology triple. Projection-formula transfers preserve the phenomenon under projective bundles and odd-degree generically finite morphisms, and cyclic triple covers give examples with ample canonical bundle in every dimension at least eight.

The same defining systems yield nonzero motivic Massey values, scalar-indecomposable higher Chow torsion, and obstructions to integral and two-local formality. 
\end{abstract}
\maketitle

\section{Introduction}
Lawson homology of a complex projective variety is defined by
\[
 L_pH_k(X)=\pi_{k-2p}\cZ_p(X),\qquad k\geq 2p\geq0,
\]
where \(\cZ_p(X)\) is the topological abelian group of algebraic \(p\)-cycles. This homotopy group definition is primary throughout the paper. For a smooth projective variety, Friedlander--Lawson duality transports the multiplicative structure of morphic cohomology to the Lawson intersection product. Lawson homology also carries other classical operations, notably the unary \(s\)-operation coming from joins of cycles; see \cite{FriedlanderFiltrations}. The operation constructed here is different in kind: it is a partial, multivalued \emph{secondary} operation, defined only when adjacent primary products vanish and depending on coherent null-homotopies of those products.

\subsection*{The new secondary structure}
The principal structural contribution of this paper is a natural system of triple and higher Lawson--Massey brackets. Intrinsically these are Toda brackets in the multiplicative spectrum representing morphic cohomology, transported to Lawson homology by duality. In a cochain model they are the usual Massey brackets. Thus ``secondary multiplicative structure'' means the natural collection of these partially defined brackets, together with their full indeterminacies and naturality; we do \emph{not} claim a canonical \(A_\infty\)-algebra structure on the graded Lawson groups themselves.

For a defined triple \((x_1,x_2,x_3)\), the bracket is an affine coset in a Lawson group. After quotienting by the full indeterminacy, that coset becomes a single well-defined class; this is the associated \emph{Lawson--Massey secondary invariant}. It vanishes exactly when the bracket contains zero. In this sense the paper introduces both a new secondary operation and the quotient invariants extracted from it. To our knowledge, while primary Lawson operations and filtrations have been extensively studied, a Massey/Toda secondary operation on Lawson homology with its full indeterminacy and comparison to the Lawson cycle map has not previously been developed systematically.

The examples below show that this is not merely a formal repackaging of an existing product. The secondary quotient can be nonzero even when its image under the Lawson cycle map to the corresponding singular-homology Massey quotient is zero. Moreover, over one fixed nonzero singular-homology triple, the common quotient kernel contains an infinite direct sum of order-two secondary classes. The construction therefore detects a layer of algebraic cycle information not visible in ordinary singular homology.

Coefficient conventions are necessarily derived: finite-coefficient Lawson homology is generally not the tensor product of the integral Lawson group with the coefficient ring. The precise spectrum-level convention and universal coefficient sequence are given in Section~\ref{sec:models}.

\subsection*{Classical Massey products and their role}
Massey products were introduced by W.~S.~Massey as higher cohomology operations on singular cochains \cite{Massey1958}; systematic treatments of higher and matric products were developed by Kraines and May \cite{Kraines,May}. For cohomology classes \(\alpha,\beta,\gamma\) with \(\alpha\beta=\beta\gamma=0\), a choice of null-homotopies for the two adjacent cup products produces a triple bracket
\[
 \langle\alpha,\beta,\gamma\rangle
 \subset H^{|\alpha|+|\beta|+|\gamma|-1}(-;R),
\]
well defined up to the classical indeterminacy
\(\alpha H^{|\beta|+|\gamma|-1}+H^{|\alpha|+|\beta|-1}\gamma\). Thus a Massey product records secondary multiplicative information not determined by the graded cohomology ring alone. The standard geometric example is higher linking: triple Massey products detect the Borromean-type phenomenon in link complements even when all pairwise linking data vanish; see Massey's original higher-operation paper and his account of higher-order linking numbers \cite{Massey1958,MasseyLinking}.

Massey products also play a central role in rational homotopy theory. A defined bracket that does not contain zero obstructs formality of the relevant differential graded algebra, and Sullivan's theory makes such secondary operations part of the structure seen beyond the cohomology ring \cite{Sullivan}. In the opposite direction, the Deligne--Griffiths--Morgan--Sullivan formality theorem implies that all defined rational Massey products on compact K\"ahler manifolds contain zero \cite{DGMS}. In this paper the ``singular homology Massey bracket'' on a closed oriented manifold is the ordinary singular \emph{cohomology} bracket transported by Poincar\'e duality; it is not based on an independent multiplication on singular homology. This viewpoint is exactly the one used in the comparison theorem below.

\subsection*{Notation and conventions}
For a smooth complex variety \(V\), we write \(K_V=\omega_V=\det\Omega_V^1\) for its canonical line bundle, \(\Pic^0(V)\) for the identity component of the Picard scheme, and \(\NS(V)=\Pic(V)/\Pic^0(V)\) for the N\'eron--Severi group. We write \(\CH^r(V)_{\homol}\) and \(\CH^r(V)_{\alg}\) for the subgroups of codimension-\(r\) cycles that are homologically trivial and algebraically trivial, respectively. The symbol \(\boxtimes\) denotes exterior product. For a line bundle \(L\) on a smooth variety, \(c_1^{\mor}(L)\in L^1H^2(V;\Z)\) denotes its morphic first Chern class; under the standard identification \(L^1H^2(V;\Z)\cong\NS(V)\), it is the N\'eron--Severi class of \(L\) \cite[Theorem~8]{FLcocycles}. When no superscript is written, \(c_1(L)\) denotes the corresponding Chow or singular first Chern class according to context.

A secondary operation is not determined by a graded ring alone. It requires multiplication together with coherent null-homotopies. For smooth projective varieties, multiplicative morphic cohomology supplies this structure, and Friedlander--Lawson duality transfers it to Lawson homology. Heller's commutative motivic ring spectrum \(\MZ^{\sst}\) represents morphic cohomology \cite{Heller}. We make the integral linear structure explicit in Section~\ref{sec:models}; this also justifies the cochain calculations used in the new torsion example.

Throughout, a triple bracket is called nonzero when it \emph{does not contain zero}, equivalently when its class modulo its full indeterminacy is nonzero. This distinction is essential: exhibiting a nonzero element of a kernel of the Lawson cycle map is not enough to exhibit a nonzero Massey bracket.

\begin{theorem}[Construction and degree]\label{thm:intro-construction}
Let \(X\) be a smooth complex projective variety of dimension \(n\), let \(R=\Z,\Q,\R\), or \(\F_\ell\), with coefficients understood in the derived spectrum-level sense explained below, and let \(x_i\in L_{p_i}H_{k_i}(X;R)\). If
\(x_1\bullet x_2=x_2\bullet x_3=0\), there is a natural triple Lawson--Massey bracket
\[
 \br{x_1,x_2,x_3}_L
 \subset L_{p_1+p_2+p_3-2n}H_{k_1+k_2+k_3-4n+1}(X;R).
\]
It is a coset modulo
\begin{align*}
 I_L={}&x_1\bullet L_{p_2+p_3-n}H_{k_2+k_3-2n+1}(X;R)\\
 &+L_{p_1+p_2-n}H_{k_1+k_2-2n+1}(X;R)\bullet x_3.
\end{align*}
The assertion is an operation on ordinary Lawson groups when all displayed cycle indices are nonnegative. The morphic operation remains defined beyond that range.
\end{theorem}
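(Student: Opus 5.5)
The plan is to build the bracket in multiplicative morphic cohomology and then transport it by Friedlander--Lawson duality; the degree and indeterminacy formulas come from bookkeeping the duality indices. First fix the indexing. For smooth projective \(X\) of dimension \(n\), duality gives
\[
 \PD\colon L^{n-p}H^{2n-k}(X;R)\xrightarrow{\ \cong\ } L_pH_k(X;R).
\]
Under this isomorphism the Lawson intersection product \(x\bullet y\) corresponds to the cup product in \(L^*H^*\). This is the definition of \(\bullet\) recalled in the introduction. Write \(x_i=\PD(a_i)\) with \(a_i\in L^{q_i}H^{m_i}\), where \(q_i=n-p_i\) and \(m_i=2n-k_i\). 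The hypotheses become \(a_1a_2=0\) and \(a_2a_3=0\).

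Next, take a model that realizes the multiplication together with its null-homotopies. The intrinsic choice is Heller's commutative ring spectrum \(\MZ^{\sst}\). Bigraded morphic cohomology (with derived coefficients \(R\), i.e.\ smashing with the Moore spectrum or using \(\MZ^{\sst}\wedge HR\)) is \([\Sigma^\infty X_+,\Sigma^{m,q}\MZ^{\sst}_R]\). The triple \(a_1,a_2,a_3\) gives composable maps, and the standard Toda bracket construction applies. It uses the multiplication \(\mu\), the two null-homotopies of \(\mu(a_1,a_2)\) and \(\mu(a_2,a_3)\), and the homotopy associativity of \(\mu\). The output is a subset of \(L^{q_1+q_2+q_3}H^{m_1+m_2+m_3-1}\). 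Its indeterminacy is exactly \(a_1\cdot L^{q_2+q_3}H^{m_2+m_3-1}+L^{q_1+q_2}H^{m_1+m_2-1}\cdot a_3\): changing a null-homotopy changes the glued class by a product with an arbitrary class of the stated degree. I would cross-check this with a strictly associative cochain model. Section~\ref{sec:models} is meant to supply such an integral \(E_\infty\)/dga model, obtained by rectifying the ring spectrum to a commutative monoid in \(\MZ\)-modules. There the usual formula \(\bar a_1 u_{23}\pm u_{12}\bar a_3\) with \(du_{12}=\pm\bar a_1\bar a_2\) and \(du_{23}=\pm\bar a_2\bar a_3\) gives the same coset. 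Naturality follows from functoriality of the ring spectrum under pullback.

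Then transport back by \(\PD\). The weight is \(q_1+q_2+q_3=3n-(p_1+p_2+p_3)\), so the cycle index is \(n-\sum q_i=p_1+p_2+p_3-2n\). The degree is \(\sum m_i-1=6n-\sum k_i-1\), so the homological degree is \(2n-(6n-\sum k_i-1)=\sum k_i-4n+1\). The same computation for the indeterminacy terms gives \(L_{p_2+p_3-n}H_{k_2+k_3-2n+1}\) and \(L_{p_1+p_2-n}H_{k_1+k_2-2n+1}\). These match the statement. When some index is negative, the morphic group still exists but is no longer a Lawson group in the \(\pi_*\cZ_p\) sense. This explains the final sentence of the theorem.

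The main obstacle is the comparison step: showing that the duality isomorphism carries the spectrum-level multiplication to \(\bullet\) compatibly with homotopies, not just on homotopy groups, and integrally with derived finite coefficients. I would handle it by never transporting homotopies at all. The bracket is defined entirely on the morphic side, where the ring structure is coherent. Only the resulting coset is moved by the group isomorphism \(\PD\). The cycle-level identification is therefore needed only for the primary products that enter the hypotheses and the indeterminacy, and there the fact that \(\PD\) is a ring isomorphism suffices.
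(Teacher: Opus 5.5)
Your proposal follows essentially the paper's route: the bracket is the Toda/Massey bracket of the classes \(a_i=D_X^{-1}x_i\) in multiplicative morphic cohomology (Section~\ref{sec:brackets}, with the coset structure of Lemma~\ref{lem:indeterminacy}), the Lawson bracket is \(D_X\) applied to that coset with \(\bullet\) defined through \(D_X\), and the degree and indeterminacy formulas come from the same substitution \((q,m)\mapsto(n-q,2n-m)\) as in Theorem~\ref{thm:lawson-degree}. One correction: the paper's integral cochain model is only an associative DGA (Lemma~\ref{lem:linear-model}, via Shipley's theorem after identifying the weight-zero coefficient spectrum with \(H\Z\)), and the paper explicitly avoids any commutative or \(E_\infty\) cochain rectification; this does not hurt you, because your cross-check formula uses only associativity.
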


\begin{theorem}[Comparison]\label{thm:intro-comparison}
The cycle map sends the bracket into the corresponding singular bracket, with singular homology brackets interpreted through Poincar\'e duality:
\[
 \Phi\bigl(\br{x_1,x_2,x_3}_L\bigr)
 \subseteq\br{\Phi(x_1),\Phi(x_2),\Phi(x_3)}_{\sing}.
\]
It also sends Lawson indeterminacy into singular indeterminacy, and hence induces a map between the two quotient classes.
\end{theorem}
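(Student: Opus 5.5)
The plan is to prove the comparison at the level of multiplicative spectra and only afterwards pass to Lawson homology via duality. By construction (Theorem~\ref{thm:intro-construction}), \(\br{x_1,x_2,x_3}_L\) is the Toda bracket in the homotopy of the module/ring spectrum \(\MZ^{\sst}\) evaluated on \(X\), i.e.\ in morphic cohomology, transported to Lawson homology by Friedlander--Lawson duality. The singular bracket is the Massey/Toda bracket in singular cohomology \(H^*(X^{\an};R)\), transported by Poincar\'e duality. So I will break the statement into three facts:
\begin{enumerate}
\item There is a multiplicative comparison map from morphic cohomology to singular cohomology.
\item Duality on the morphic side is compatible with Poincar\'e duality.
\item Toda brackets are natural under maps of ring spectra (equivalently, Massey products under multiplicative cochain maps).
\end{enumerate}

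\textbf{Step 1: the comparison map.} I would realize the cycle map \(\Phi\) as a map of commutative (at least \(A_\infty\), or \(E_\infty\)) ring spectra. The Betti realization of Heller's \(\MZ^{\sst}\) maps to the Eilenberg--MacLane spectrum \(HR\), and this map induces the natural transformation \(L^qH^k(X;R)\to H^k(X^{\an};R)\). For \(\Z\) and the derived coefficients of Section~\ref{sec:models}, I would work in the associative cochain model from that section. There I need an explicit multiplicative cochain map, or a zigzag of quasi-isomorphisms of DGAs, from the morphic cochain model to singular cochains. The finite-coefficient case follows by smashing with the Moore spectrum, or by tensoring the cochain models with \(R\) in the derived sense.

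\textbf{Step 2: naturality of the bracket.} Given a defining system \((a_1,a_2,a_3;u,v)\) with \(da_i=0\), \(du=\pm a_1a_2\) and \(dv=\pm a_2a_3\), a multiplicative chain map \(f\) sends it to a defining system \((fa_i;fu,fv)\) for the image classes. Hence \(f\) of the Massey representative \(u a_3\pm a_1 v\) is a representative of the image bracket, which gives the inclusion. Indeterminacy goes over directly: \(f(x_1\bullet y)=f(x_1)f(y)\) lies in \(\Phi(x_1)\cdot H^{*}\), and similarly on the right. So \(\Phi(I_L)\subseteq I_{\sing}\), and \(\Phi\) descends to the quotient classes. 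If the model is only a zigzag of quasi-isomorphisms, I would invoke invariance of Massey brackets under quasi-isomorphisms of DGAs, which holds integrally because brackets are defined from cohomology classes and defining systems lift along surjective quasi-isomorphisms after replacing by a fibrant model.

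\textbf{Step 3: duality compatibility, the main obstacle.} I must show that the square formed by Friedlander--Lawson duality \(L^{q}H^{k}(X)\cong L_{n-q}H_{2n-k}(X)\), Poincar\'e duality, and the cycle maps on cohomology and homology commutes. This is the known compatibility of the Friedlander--Lawson duality theorem with Poincar\'e duality under the cycle maps, which I would cite rather than reprove. It must hold on the nose at the level of groups, possibly up to a universal sign. A sign is harmless because the brackets and their indeterminacies are closed under negation. Since the Lawson intersection product is defined by transport, both the products and the brackets on homology are transports, and commutativity of this square converts Step 2 into the stated inclusion \(\Phi(\br{x_1,x_2,x_3}_L)\subseteq\br{\Phi(x_1),\Phi(x_2),\Phi(x_3)}_{\sing}\). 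The delicate point is making the multiplicative map of Step 1 strictly compatible with the model used to define the bracket. I would first try to use the natural map of Friedlander--Lawson mapping spaces, \(\Map(X,\cZ_q)\) to singular cochains, which is multiplicative up to coherent homotopy via the join or external product.
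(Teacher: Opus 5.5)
Your plan is essentially the paper's proof: apply naturality of the bracket (Proposition~\ref{prop:naturality}) to Heller's multiplicative realization, note that in the Eilenberg--Mac~Lane target the Toda bracket is the ordinary Massey formula, transfer to homology by \eqref{eq:compatibility}, and use multiplicativity on the two indeterminacy summands. One small correction: \eqref{eq:compatibility} holds with no sign, and your fallback justification is wrong as stated, since a coset $w+I$ is generally not closed under negation; a universal sign $\varepsilon$ would still be harmless, but only because $\br{\varepsilon\alpha,\varepsilon\beta,\varepsilon\gamma}=\varepsilon\br{\alpha,\beta,\gamma}$ matches the sign on the output.
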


\begin{theorem}[A bracket detected by singular homology]\label{thm:intro-ekedahl}
For some prime \(\ell\), there is a smooth complex projective threefold \(X\) and classes \(x_i\in L_2H_5(X;\F_\ell)\) with vanishing adjacent products and
\[
 0\notin\br{x_1,x_2,x_3}_L\subset L_0H_4(X;\F_\ell).
\]
The corresponding singular homology bracket is nonzero.
\end{theorem}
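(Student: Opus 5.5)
The plan is to reduce everything to singular cohomology. In the relevant bidegrees the Lawson cycle map with finite coefficients is an isomorphism, and a nonvanishing singular Massey product can then be pulled back through Theorem~\ref{thm:intro-comparison}.

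\emph{Step 1 (input from singular cohomology).} I would take Ekedahl's examples of smooth complex projective varieties carrying a defined mod-\(\ell\) triple Massey product \(\langle a_1,a_2,a_3\rangle\subset H^2(Y;\F_\ell)\) with \(a_i\in H^1(Y;\F_\ell)\) that does not contain zero. These examples are Godeaux--Serre type quotients of complete intersections by a finite group whose mod-\(\ell\) cohomology has a nontrivial Massey product. If \(Y\) has dimension larger than three, I would cut it down by a general complete-intersection threefold \(X\subset Y\). By the Lefschetz hyperplane theorem, \(H^1(Y;\F_\ell)\to H^1(X;\F_\ell)\) is an isomorphism and \(H^2(Y;\F_\ell)\to H^2(X;\F_\ell)\) is injective. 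I would then check that the restricted triple is still defined and still does not contain zero. Naturality of Massey products sends the bracket on \(Y\) into the bracket on \(X\). The indeterminacy on \(X\) is \(a_1H^1(X)+H^1(X)a_3\). Since \(H^1\) does not change under restriction, this is exactly the image of the indeterminacy on \(Y\). Injectivity on \(H^2\) therefore shows that zero is not in the restricted bracket.

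\emph{Step 2 (Lawson classes).} Let \(n=3\). By Poincar\'e duality \(H^1(X)\cong H_5(X)\) and \(H^2(X)\cong H_4(X)\). I would invoke the finite-coefficient comparison: the Beilinson--Lichtenbaum/Suslin consequence, in Friedlander--Suslin form, says that \(\Phi:L_pH_k(X;\F_\ell)\to H_k(X;\F_\ell)\) is an isomorphism for \(k\ge n+p\). This applies to \(L_2H_5\), since \(5\ge 3+2\); to \(L_1H_4\), since \(4\ge 3+1\); and trivially to \(L_0H_4\) by Dold--Thom.

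Put \(x_i=\Phi^{-1}(\PD\,a_i)\in L_2H_5(X;\F_\ell)\). The products \(x_1\bullet x_2\) and \(x_2\bullet x_3\) lie in \(L_{1}H_{4}(X;\F_\ell)\), by the degree formula in Theorem~\ref{thm:intro-construction}. Their images under \(\Phi\) are the Poincar\'e duals of \(a_1a_2=0\) and \(a_2a_3=0\), because \(\Phi\) is multiplicative. Injectivity of \(\Phi\) on \(L_1H_4\) then gives \(x_1\bullet x_2=x_2\bullet x_3=0\). Theorem~\ref{thm:intro-construction} therefore provides the bracket \(\br{x_1,x_2,x_3}_L\subset L_0H_4(X;\F_\ell)\).

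\emph{Step 3 (nonvanishing).} By Theorem~\ref{thm:intro-comparison},
\[
\Phi\bigl(\br{x_1,x_2,x_3}_L\bigr)\subseteq\br{\Phi(x_1),\Phi(x_2),\Phi(x_3)}_{\sing}=\PD\langle a_1,a_2,a_3\rangle .
\]
The right-hand side does not contain zero. Since \(\Phi(0)=0\), zero cannot lie in the Lawson bracket. This step needs neither surjectivity nor any comparison of indeterminacies.

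The main obstacle I expect is Step 1, together with the precise form of the finite-coefficient isomorphism used in Step 2. For Step 1, one must confirm that Ekedahl's example, or its hyperplane section, really lives in dimension three with the Massey product in \(H^1\)-classes, and that this survives the passage to \(X\). For Step 2, one must make sure that the derived, spectrum-level \(\F_\ell\)-coefficients of Section~\ref{sec:models} are the ones to which the Suslin-type comparison applies. If the cleanest comparison is only available in the range \(k\ge n+p\), the degrees above are exactly at its boundary. I would double-check the case \(L_1H_4\) first.
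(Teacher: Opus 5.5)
Your argument is correct in substance, but it takes a different route from the paper.

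\textbf{Your route versus the paper's.} The paper avoids the Beilinson--Lichtenbaum/Suslin comparison entirely. It works on Ekedahl's \emph{surface} \(S_0\), where the only comparisons needed are elementary:
\begin{itemize}
\item \(L^1H^1(S_0;\F_\ell)\cong H^1(S_0;\F_\ell)\) in weight one (Lemma~\ref{lem:low-weight}, via the exponential and coefficient sequences);
\item \(L^2H^2(S_0;\F_\ell)\cong L_0H_2(S_0;\F_\ell)\cong H_2(S_0;\F_\ell)\) by Dold--Thom.
\end{itemize}
So the lifted classes have vanishing adjacent products on \(S_0\). The inclusion argument of your Step~3 then shows that the morphic bracket on \(S_0\) avoids zero. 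The paper then pulls back to \(X_\ell=S_0\times\PP^1\). There the adjacent products vanish simply because they are pullbacks of zero. Nonvanishing persists because, by naturality and restriction along a section of \(\pi\), zero in the pulled-back bracket would give zero in the bracket on \(S_0\).

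In particular, the paper never needs injectivity of \(\Phi\) on \(L_1H_4\) of a threefold. In your route this is the deepest input. Your use of \(L_2H_5\cong H_5\) is only the weight-one statement \(L^1H^1\cong H^1\), which Lemma~\ref{lem:low-weight} gives for free. But \(L_1H_4(X;\F_\ell)\cong L^2H^2(X;\F_\ell)\to H^2(X;\F_\ell)\) is a weight-two statement. It needs the finite-coefficient motivic--morphic comparison together with Merkurjev--Suslin. The range \(k\ge n+p\) is inclusive, so your degrees are indeed covered. Your coefficient worry is also harmless, since the derived coefficients of Section~\ref{sec:models} are the usual mod-\(\ell\) Lawson groups.

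What each approach buys:
\begin{itemize}
\item Yours gives more generality. On \emph{any} smooth projective threefold, every defined mod-\(\ell\) singular triple of degree-one classes lifts to a defined Lawson triple whose nonvanishing is detected singularly.
\item The paper's is self-contained, using only weight-one comparison, Dold--Thom, and a section.
\end{itemize}

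\textbf{A fix needed in Step~1.} Ekedahl's example, as used here, is a surface, so there is nothing to cut down; you must go \emph{up} in dimension. The simplest fix is to take \(S_0\times\PP^1\), which is exactly the paper's threefold. K\"unneth gives \(H^1\) unchanged, and the section argument shows that the pulled-back singular bracket still avoids zero. Alternatively, if you use the finite-group input directly, run Serre's construction with a three-dimensional complete intersection. The map \(Y\to B\pi_1(Y)\) is \(2\)-connected, so \(H^1(-;\F_\ell)\) is unchanged and \(H^2(-;\F_\ell)\) injects, and your indeterminacy argument goes through verbatim.
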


The stronger result is the following integral example.
\begin{theorem}[Integral torsion invisible to singular homology]\label{thm:main}
There is a smooth complex projective eightfold
\[
 X=S\times T\times B\times\PP^1
\]
and classes
\[
 x_1,x_2\in L_7H_{14}(X;\Z),\qquad
 x_3\in L_5H_{10}(X;\Z)
\]
such that the adjacent products vanish and
\begin{equation}\label{eq:main-bracket}
 0\notin\br{x_1,x_2,x_3}_L\subset L_3H_7(X;\Z).
\end{equation}
There is a value \(\xi\) of this bracket with
\begin{equation}\label{eq:main-value}
 \xi\neq0,\qquad 2\xi=0,\qquad \Phi_{3,7}(\xi)=0.
\end{equation}
Its Lawson quotient class is nonzero, whereas its image in the Poincar\'e-dual singular homology Massey quotient is zero. On the morphic side all three classes have positive cohomological degree, namely \(2,2,6\).
\end{theorem}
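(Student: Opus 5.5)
The plan is to build the example by exterior products, so that the defining system, the bracket value and the indeterminacy all split along the four factors of \(X=S\times T\times B\times\PP^1\). The degrees fix the shape. The classes \(x_1,x_2\) have morphic degree \(2\), so they are divisor classes in \(L^1H^2(X;\Z)\). The class \(x_3\) has morphic degree \(6\) and codimension \(3\). The bracket then lands in morphic degree \(9\), codimension \(5\).

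\textbf{Choice of classes.} I would take \(S\) to be a surface carrying a \(2\)-torsion line bundle, for instance an Enriques surface with \(\tau=c_1^{\mor}(K_S)\). I would take \(T\times B\) to be a fivefold carrying a codimension-\(3\) class whose Lawson image is nonzero but which contributes a class killed by the cycle map. The intended source of such a class is a Griffiths-type or higher-Chow torsion class, detected in Lawson homology but not in singular homology. I would put \(x_1=\tau\boxtimes 1\boxtimes 1\boxtimes h\), where \(h\) is the point class on \(\PP^1\). I would choose \(x_2\) so that \(x_1\bullet x_2\) is forced to vanish either by \(h^2=0\) or by \(2\tau=0\) together with a factor of \(2\). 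Finally \(x_3=1\boxtimes y\boxtimes 1\) with \(y\) supported on \(T\times B\), chosen so that \(x_2\bullet x_3=0\) for dimension reasons on \(T\times B\).

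\textbf{Computing a value.} Using the integral cochain model of Section~\ref{sec:models} and Theorem~\ref{thm:intro-construction}, I would write down explicit null-homotopies of the two adjacent products. The one coming from \(2\tau=0\) is a Bockstein-type cochain on \(S\); the one coming from the fivefold is a cycle-level null-homotopy. Inserting these into the Massey formula gives a value \(\xi\) of exterior-product form \(\xi=\beta(\tau)\boxtimes z\boxtimes[\PP^1\text{-part}]\), where \(z\) is the torsion class on \(T\times B\) produced by the null-homotopy of \(y\). From this form, \(2\xi=0\) follows from the \(2\)-torsion of the \(S\)-factor. The vanishing \(\Phi_{3,7}(\xi)=0\) follows because \(\Phi\) is compatible with exterior products and kills \(z\) by construction. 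Comparison (Theorem~\ref{thm:intro-comparison}) then gives the statement that the image in the singular Massey quotient is zero.

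\textbf{Main obstacle: \(0\notin\br{x_1,x_2,x_3}_L\).} The hardest step is showing that \(\xi\) does not lie in the full indeterminacy \(I_L\). I would first compute \(I_L\) by the Künneth-type decomposition of \(L_{p_2+p_3-n}H_{*}(X)\) and \(L_{p_1+p_2-n}H_*(X)\) in the relevant degrees. The aim is to show that every element of \(x_1\bullet(-)+(-)\bullet x_3\) lies in a summand of \(L_3H_7(X;\Z)\) that maps to zero under a suitable detecting functional. My first choice of functional is: cap with a class on the \(S\times\PP^1\) factor, push forward to \(T\times B\), and reduce mod \(2\). This should send \(\xi\) to the nonzero torsion class \(z\), while the factor \(h\) in \(x_1\) and the factor \(1\) in \(x_3\) force both parts of the indeterminacy into its kernel. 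If the Künneth decomposition of Lawson groups is not available integrally, the fallback is to use the projective bundle formula for the \(\PP^1\) factor and naturality of brackets under the projections. This isolates the component where the indeterminacy visibly vanishes. The final claim about morphic degrees \(2,2,6\) is then immediate from the indices.
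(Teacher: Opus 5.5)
The genuine gap is the step you yourself call the main obstacle, showing \(\xi\notin I_L\); several set-up choices also do not yet give a working defining system. As written, \(x_1=\tau\boxtimes1\boxtimes1\boxtimes h\) has morphic bidegree \((2,4)\), not \((1,2)\), and \(x_2\) is never fixed. In the paper the triple is \(a=\pi^*t_S\), \(b=2h\), \(c=\pi^*z\) with \(z=c_1^{\mor}(K_T)\boxtimes\gamma\). Both adjacent products vanish because \(2t_S=2z=0\), not for dimension reasons. What is needed about \(z\) is not only \(2z=0=\cl(z)\) but \(\bar z\neq0\) in \(L^3H^6(T\times B)/2\) (Lemma~\ref{lem:gamma} with Theorem~\ref{thm:schreieder}). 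The value is also not an exterior product \((\text{class on }S)\boxtimes z\boxtimes h\). By degrees its \(S\)-factor would lie in \(L^1H^1(S)=H^1(S;\Z)=0\), and a value of the form \(t_S\cdot(\cdots)\) would lie in \(x_1\bullet L_4H_9(X)\subseteq I_L\) anyway. The actual value is \(\widetilde w=\pi^*w\cdot h\) with \(w=[A\boxtimes V-U\boxtimes B_0]\boxtimes\gamma\), a torsion-linking class mixing the \(S\)- and \(T\)-factors; on \(S\times T\) it is the Bockstein of the product of the Bockstein preimages of \(t_S\) and \(t_T\). Getting \(\Phi(\xi)=0\) from a homologically trivial exterior factor is the right idea, but that factor is \(\gamma\), via \(W=W_0\boxtimes G\). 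Likewise \(2w=0\) comes from \(d(U\boxtimes V_Y)=2W\), not from a torsion factor on \(S\).

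Your detector does not exist in the needed degrees. Start from bidegree \((5,9)\) and push forward along a projection of relative dimension three. To land in the bidegree \((3,6)\) of \(z\), you would have to cap with a class in \(L^1H^3(S\times\PP^1)\). But weight-one morphic cohomology vanishes above degree two, and this also holds with \(\F_2\)-coefficients by \eqref{eq:coefficients}. More basically, nothing in your plan controls the summand \(x_1\bullet L_4H_9(X)\), which is \(t_S\) times arbitrary non-product classes. Once degrees are corrected \(x_1\) carries no \(h\), and without the integral K\"unneth decomposition you rightly doubt, you have no handle on it. Your \(\PP^1\) fallback matches a step of the paper (\(\pi_*J\subseteq I_0\), \(\pi_*\widetilde w=w\)). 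However, it only moves the problem to \(X_0=S\times T\times B\), where the indeterminacy does not visibly vanish.

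The missing idea is the paper's relative detector on \(X_0\):
\begin{itemize}
\item Construct \(S\) with \(H^1(S;\Z)=0\) containing an elliptic curve \(E\), chosen beforehand relative to \(Y=T\times B\) as in Theorem~\ref{thm:AZ}.
\item Arrange that \(t_S|_E=0\) in \(\NS(E)\) while the Bockstein preimage of \(t_S\) restricts nontrivially to \(E\) (Lemma~\ref{lem:surface}).
\item The additive map \(\mathcal R=j^*-p^*s^*\) then kills all of \(t_SL^3H^5(X_0)+L^1H^1(X_0)z\). The first summand dies because \(t_S\) vanishes on \(E\) and at a point; the second because every degree-one class on \(X_0\) comes from \(Y\).
\item Yet \(\mathcal R\) sends the chosen value to \(-u\boxtimes z\). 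Here \(u\) is an integral degree-one class on \(E\) with \(\bar u\neq0\) that does not extend to \(S\).
\item Theorem~\ref{thm:AZ} together with \(\bar z\neq0\) makes \(-u\boxtimes z\) nonzero modulo two.
\end{itemize}
An Enriques surface might still serve as \(S\), using a half-fibre \(E\) of an elliptic fibration, on which \(K_S\) restricts to a nontrivial \(2\)-torsion bundle of degree zero. You would still need this restriction mechanism and the generality of \(E\) relative to \(T\times B\).
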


Here \(T\) is an Enriques surface chosen relative to a very general principally polarized abelian threefold \(B\). The other surface \(S\) is an equivariant complete-intersection quotient containing a prescribed elliptic curve. The order of the choices is important. We first choose \(B,T\), then an elliptic curve \(E\) relative to \(Y=T\times B\), and only then construct \(S\) containing \(E\). No simultaneous very-generality assertion about all factors is required.

The geometric cycle \(z=c_1(K_T)\boxtimes\gamma\) on \(Y\), where \(K_T=\omega_T\) is the canonical line bundle of the Enriques surface \(T\) and \(\gamma\) is homologically trivial and nonzero modulo two on \(B\), is supplied by established exterior-product results. The new calculation is that a torsion-linking defining system involving \(z\) survives \emph{all} global Massey indeterminacy. In particular, the proof does not rely on a Lawson K\"unneth decomposition.

The third singular homology class in this first construction is zero. The following strengthening removes that feature without increasing the dimension.

\begin{theorem}[Fixed nonzero singular homology classes]\label{thm:intro-fixed}
The eightfold \(X\) can be chosen with classes \(x_1,x_2\in L_7H_{14}(X;\Z)\) and \(x_{3,j}^+\in L_5H_{10}(X;\Z)\), for \(j\geq1\), such that every bracket \(\br{x_1,x_2,x_{3,j}^+}_L\) is defined and nonzero. Their three singular homology classes are nonzero and do not depend on \(j\). There are specified values \(\xi_j\in L_3H_7(X;\Z)\) of exact order two with \(\Phi(\xi_j)=0\).

If \(I_{L,j}\) is the full indeterminacy of the \(j\)-th bracket and \(I_\Sigma=\sum_{j\geq1}I_{L,j}\), the images of \(\xi_j\) are linearly independent modulo \(I_\Sigma+2L_3H_7(X;\Z)\). They generate a subgroup \(\bigoplus_{j\geq1}\Z/2\) in the kernel of the common quotient comparison to the ordinary Massey quotient of the fixed singular homology triple.
\end{theorem}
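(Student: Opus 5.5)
The plan is to deduce Theorem~\ref{thm:intro-fixed} from the construction behind Theorem~\ref{thm:main}, by perturbing the third class and producing infinitely many independent torsion-linking defining systems. Keep \(X=S\times T\times B\times\PP^1\), together with \(x_1,x_2\) and the class \(z=c_1(K_T)\boxtimes\gamma\) on \(Y=T\times B\).

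\textbf{Step 1: infinitely many inputs \(\gamma_j\).} Replace the single cycle \(\gamma\) on \(B\) by a sequence \(\gamma_j\in\CH(B)_{\homol}\). For a very general principally polarized abelian threefold one has Nori-type infinite generation results: the Griffiths group modulo two contains an infinite \(\F_2\)-independent family, for instance Ceresa-type cycles and their images under isogenies or multiplication maps. The established exterior-product results used for \(z\) then give classes \(z_j=c_1(K_T)\boxtimes\gamma_j\). These are \(2\)-torsion, lie in the kernel of the Lawson cycle map, and are independent modulo two in the relevant Lawson group of \(Y\). Running the defining system of Theorem~\ref{thm:main} with \(z_j\) in place of \(z\) produces classes \(x_{3,j}\) and values \(\xi_j\) with \(2\xi_j=0\) and \(\Phi(\xi_j)=0\).

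\textbf{Step 2: the square-zero perturbation.} Choose a fixed class \(w\in L_5H_{10}(X;\Z)\) with \(\Phi(w)\neq0\) and \(x_2\bullet w=0\), and whose contribution to the bracket can be nullified. A natural choice is an algebraic class pulled back from a factor on which the relevant products vanish for degree reasons, for example one involving the point class of \(\PP^1\) paired against the \(\PP^1\)-hyperplane factor appearing in \(x_2\). Set \(x^+_{3,j}=x_{3,j}+w\). Since \(x_{3,j}\) has \(\Phi(x_{3,j})=0\), the singular triple \((\Phi x_1,\Phi x_2,\Phi x^+_{3,j})\) is fixed and nonzero. By linearity of Massey brackets in each variable, \(\br{x_1,x_2,x_{3,j}^+}_L\) contains \(\xi_j+\eta\), where \(\eta\in\br{x_1,x_2,w}_L\). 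Choose the null-homotopy for \(w\) so that \(\eta=0\) at cochain level; this is the square-zero condition, using \(x_2\cdot w=0\) strictly in the cochain model of Section~\ref{sec:models}. With that choice, \(\xi_j\) itself is a value of the \(j\)-th bracket.

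\textbf{Step 3: independence modulo \(I_\Sigma+2L_3H_7\).} This is the main obstacle. The sum \(I_\Sigma=x_1\bullet L(\dots)+\sum_j L(\dots)\bullet x^+_{3,j}\) is much larger than any single indeterminacy. The approach is to build one detecting functional \(\lambda\colon L_3H_7(X;\Z)\to\bigoplus_j\F_2\). It is obtained by pushing forward to \(Y\) via the point and \(\PP^1\) factors, after capping with the dual of \(E\subset S\), and then applying a mod-two Abel--Jacobi or \(K_T\)-linking invariant in each \(\gamma_j\)-coordinate.

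Three facts then need checking. First, \(\lambda\) kills \(x_1\bullet(\text{anything})\), because \(x_1\) restricted along \(E\) is divisible by \(2\) or factors through \(\NS\)-classes that pair trivially with the torsion of \(T\); this is the same argument as in Theorem~\ref{thm:main}. Second, \(\lambda\) kills \(u\bullet x^+_{3,j}\) for every \(u\): the \(w\)-part dies by the degree reason of Step 2, and \(u\bullet x_{3,j}\) lands in the span of \(2\)-divisible or algebraically trivial classes of the form \(\alpha\boxtimes\gamma_j\). Third, \(\lambda(\xi_j)=e_j\). The second fact is the most delicate point, because one must rule out mixing, where \(u\bullet x_{3,j}\) might hit \(e_i\) for \(i\neq j\). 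I would try to handle this by computing \(\lambda(u\bullet x_{3,j})\) through the projection formula, which expresses it as a pairing of an algebraic class with \(\gamma_j\). Such a pairing can only contribute in the \(j\)-th coordinate, and there it gives an even multiple.

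Given \(\lambda\), the images of the \(\xi_j\) are independent modulo \(I_\Sigma+2L_3H_7\), and in particular each bracket is nonzero. They generate \(\bigoplus_j\Z/2\), and by \(\Phi(\xi_j)=0\) and Theorem~\ref{thm:intro-comparison} they lie in the kernel of the common quotient comparison.
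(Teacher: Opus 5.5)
\textbf{The gap is in Steps~2--3.} An arbitrary perturbation \(w\) changes the full indeterminacy, and nothing in your argument controls the change.

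On the morphic side your \(w\) is \(h\omega\), with \(\omega\) pulled back from \(X_0\). The \(j\)-th indeterminacy then contains \(v\cdot(c_j+h\omega)\) for every \(v\in L^2H^3(X)\). The relation \(h^2=0\), your ``degree reason'', only disposes of \(b\cdot h\omega\) and of those \(v\) divisible by \(h\). For \(v=\pi^*v_0\) with \(v_0\in L^2H^3(X_0)\) arbitrary, the extra term \(h\pi^*(v_0\omega)\) survives, and after \(\pi_*\) your detector must kill \(v_0\omega\).

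For natural choices, the detector that actually sees the \(\xi_j\) does not kill it. Suppose \(\omega\) comes from \(B\) with \(\cl(\omega)\not\equiv0\) modulo two. Take \(v_0\) to be a value of the torsion-linking bracket \(\br{t_S,2,t_T}\subset L^2H^3(S\times T)\). The computation of Lemma~\ref{lem:R-value} gives \(\mathcal R(v_0\omega)=-u\boxtimes t_T\boxtimes\omega\). By Theorems~\ref{thm:schreieder} and~\ref{thm:AZ}, this is nonzero modulo two and independent of the classes \(u\boxtimes z_j\). So your second ``fact'' fails for the restriction detector. The mixing analysis you sketch, with pairings contributing only in the \(j\)-th coordinate, has no mechanism behind it.

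Your detector as described has further defects. Capping with \([E]\) and pushing forward to \(Y\) integrates the degree-one class \(u\) over the curve \(E\). It therefore annihilates \(-u\boxtimes z_j\) and cannot give \(\lambda(\xi_j)=e_j\). Also, ``\(\gamma_j\)-coordinate'' functionals presuppose a K\"unneth-type splitting that is not available. Finally, strict vanishing of \(H^2\) at cochain level, used in Step~2, is not available in the models of Section~\ref{sec:models}.

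\textbf{The missing idea} is to perturb by a multiple of the \emph{first} class. The paper takes \(c^+=c+ah\vartheta\) with \(a=\pi^*t_S\). Then \(v\cdot ah\vartheta\in aL^4H^7(X)\), so the full indeterminacy is literally unchanged: \(J^+=J\) (Proposition~\ref{prop:perturbation}). The explicit system \(U_{12}=UH\), \(V_{23}=HV_Y+UH^2P_\vartheta\) produces \(\widetilde w\) plus \((AU-UA)H^2P_\vartheta\). That extra term is a closed commutator times \(H^2\), so it vanishes in cohomology because \(h^2=0\), with no strictness assumed.

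Independence then comes from a single additive detector \(\mathcal D=\mathcal R\pi_*\), with \(\mathcal R=j^*-p^*s^*\), which keeps the \(E\)-factor.
\begin{itemize}
\item It kills \(aL^4H^7(X)\) because \(j^*t_S=s^*t_S=0\).
\item It kills \(L^2H^3(X)c_j\) because \(\pi_*\) sends it into \(L^1H^1(X_0)z_j\), which is pulled back from \(Y\) since \(H^1(S;\Z)=0\).
\end{itemize}
Hence \(\mathcal D(J_\Sigma)=0\) outright, while the values \(\mathcal D(\widetilde w_j)=-u\boxtimes z_j\) are independent modulo two.

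Your route could instead be repaired by choosing \(\omega\) that restricts to zero on both \(E\times Y\) and \(\{s_0\}\times Y\), for instance the pulled-back point class of \(S\). But that is a restriction condition you would have to impose and verify, together with a new null-homotopy for \(bc^+\); it is not a consequence of \(h^2=0\).

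A smaller point: Step~1 should rest on Totaro's theorem via Lemma~\ref{lem:gamma}, not on an unspecified Nori-type family.
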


The key change is elementary but must be made at the level of defining systems. On the morphic side, replace the third class \(c\) by \(c^+=c+a h\vartheta\), where \(h=c_1(\mathcal O_{\PP^1}(1))\) and \(\vartheta=c_1^{\mor}(\Theta)\) for an ample line bundle \(\Theta\) defining a principal polarization on \(B\). The full indeterminacy is unchanged; the difference of the chosen values is a closed commutator multiplied by \(h^2\), and hence vanishes. Thus no strictly commutative integral cochain model is assumed. The resulting third singular cohomology class on the morphic side is the nonzero torsion class \(\cl(a)h\cl(\vartheta)\).

\begin{theorem}[Examples with ample canonical bundle]\label{thm:intro-general-type}
For every \(d\geq8\), a smooth connected complex projective \(d\)-fold with ample canonical bundle supports the fixed singular homology data phenomenon of Theorem~\ref{thm:intro-fixed}. The relevant Lawson groups are
\[
 L_{d-1}H_{2d-2},\quad L_{d-1}H_{2d-2},\quad L_{d-3}H_{2d-6},
\]
and its selected order-two values lie in \(L_{d-5}H_{2d-9}\), remain independent modulo the sum of all full indeterminacies and modulo two, and map to zero under the Lawson cycle map to singular homology.
\end{theorem}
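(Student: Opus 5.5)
The plan is to transfer the eightfold example of Theorem~\ref{thm:intro-fixed} to a variety with ample canonical bundle, using the projection-formula transfers described in the abstract, and then to reach all dimensions \(d\geq8\) by a product or bundle step.

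\textbf{The dimension-eight case.} Let \(X=S\times T\times B\times\PP^1\) be the eightfold of Theorem~\ref{thm:intro-fixed}. Choose a very ample line bundle \(M\) on \(X\) and a general section of \(M^{\otimes 3}\), and let \(f\colon W\to X\) be the resulting cyclic triple cover. Since \(K_W=f^*(K_X\otimes M^{\otimes 2})\), taking \(M\) sufficiently ample makes \(K_W\) ample. Bertini and the standard properties of cyclic covers make \(W\) smooth and connected. The map \(f\) is finite of degree \(3\), which is odd.

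Set \(y_1=f^*x_1\), \(y_2=f^*x_2\), \(y_{3,j}=f^*x_{3,j}^+\), with pullback taken on the morphic side via multiplicativity. Pullback is a ring map of multiplicative morphic cohomology spectra, so it carries defining systems to defining systems. Hence \(f^*\xi_j\in\br{y_1,y_2,y_{3,j}}_L\). Naturality of \(\Phi\) under \(f^*\), together with \(\Phi(\xi_j)=0\), gives \(\Phi(f^*\xi_j)=0\). The singular classes \(f^*\Phi(x_i)\) are nonzero and independent of \(j\), because \(f_*f^*=3\) is an isomorphism on the relevant classes: the third class is \(2\)-torsion and the others are nonzero.

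\textbf{Independence, the main obstacle.} The pullback values could a priori become zero modulo the larger indeterminacy on \(W\). I would use the projection formula
\[
f_*(f^*a\bullet b)=a\bullet f_*b .
\]
This gives
\[
f_*\bigl(y_1\bullet L_*H_*(W)+L_*H_*(W)\bullet y_{3,j}\bigr)\subseteq I_{L,j},
\]
so \(f_*\) maps \(I_\Sigma(W)+2L_3H_7(W)\) into \(I_\Sigma(X)+2L_3H_7(X)\). Since \(f_*f^*\xi_j=3\xi_j\equiv\xi_j\) modulo \(2\), a relation \(\sum\epsilon_j f^*\xi_j\in I_\Sigma(W)+2L(W)\) pushes forward to the same relation among the \(\xi_j\) on \(X\). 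That contradicts Theorem~\ref{thm:intro-fixed}. The same argument shows that the order of \(f^*\xi_j\) is exactly two and that each bracket is nonzero.

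\textbf{Higher dimensions.} For \(d>8\), form \(W\times C^{d-8}\) or a projective-bundle variant, where \(C\) is a curve of genus \(\ge2\); the canonical bundle \(K_W\boxtimes K_{C}^{\boxtimes(d-8)}\) is ample. Pull back along the projection \(\pi\) and cap with the class of a point of \(C^{d-8}\) to split it. That is, use \(\pi_*(\pi^*a\bullet[\mathrm{pt}\times W])=a\), again via the projection formula, to rerun the argument above. This shifts the Lawson indices to \(L_{d-1}H_{2d-2}\), \(L_{d-1}H_{2d-2}\), \(L_{d-3}H_{2d-6}\), with values in \(L_{d-5}H_{2d-9}\), as stated.

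An alternative is to take the cyclic cover directly over \(X\times(\PP^1)^{d-8}\), using the projective-bundle transfer. The part I would check most carefully is that the pushforward respects the derived coefficient conventions and the full (not just classical) indeterminacy. This is the step where the projection formula must hold at the level of morphic cohomology products, not merely on homology groups.
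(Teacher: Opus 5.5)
Your proposal is correct, but it orders the steps differently from the paper. The paper first passes to $V_d=X\times\PP^{d-8}$ by the projective-bundle transfer $T(\eta)=p_*(\zeta^{d-8}\eta)$. It then takes a single cyclic triple cover $Z_d\to V_d$, branched along a general section of $L^{\otimes3}$, with $L$ chosen so that $K_{V_d}\otimes L^{\otimes2}$ is ample. In that route the negativity of $K_{\PP^{d-8}}$ is absorbed by the cover.

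You instead take the triple cover already in dimension eight. You then raise the dimension by multiplying with $C^{d-8}$, $g(C)\ge2$. This keeps the canonical bundle ample, because $K_W\boxtimes K_C^{\boxtimes(d-8)}$ is an exterior product of ample bundles. The transfer you need for that step is only the section retraction, or $p_*([\mathrm{pt}]\cdot -)$ with trace $1$, which is Corollary~\ref{cor:product-stability}. Your alternative, a cover of $X\times(\PP^1)^{d-8}$, is essentially the paper's route.

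Both routes rest on the same mechanism, which is exactly Proposition~\ref{prop:transfer-family}. The projection formula sends each full indeterminacy $y_1\bullet L_*H_*(W)+L_*H_*(W)\bullet y_{3,j}$ into $I_{L,j}$, hence the sum into $I_\Sigma$. The odd trace $3$ then acts as the identity on the order-two values.

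What each buys: your version avoids the projective bundle formula for the dimension increase and needs only a retraction. The paper's version gives one uniform construction for each $d$, and its output is not visibly a product with curves.

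One small slip: injectivity of multiplication by $3$ on the three singular classes needs that none of them is $3$-torsion, not merely that they are nonzero. This does hold here, since $\cl(a)$ and $\cl(c^+)$ have order two and $\cl(b)=2h$ has infinite order. You should state it that way.
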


The proof uses a transfer compatible with the two complete indeterminacy summands, followed by a cyclic cover of odd degree. Two further consequences are proved: the explicit degree-one higher Chow cycles underlying the defining systems give an infinite order-two subgroup modulo scalar-decomposable cycles, and both motivic and morphic cochain algebras of \(X\) fail to be formal over \(\Z\) and \(\Z_{(2)}\). These are applications of the specified secondary values, not claims that infinite higher Chow torsion or the general Massey obstruction to formality are new phenomena; see \cite{AZ,May}.

Sections~\ref{sec:models}--\ref{sec:comparison} give the construction and comparison. Sections~\ref{sec:ekedahl}--\ref{sec:criterion} retain the finite-coefficient example and the elliptic--quintic obstruction. Sections~\ref{sec:ingredients}--\ref{sec:eightfold} prove Theorem~\ref{thm:main}. Section~\ref{sec:nonzero-classes} proves Theorem~\ref{thm:intro-fixed}; Section~\ref{sec:transfer} proves Theorem~\ref{thm:intro-general-type}. Sections~\ref{sec:higher-chow}--\ref{sec:nonformality} give the motivic, higher Chow, and formality applications. The final section states the remaining limitations and questions.

\section{Multiplicative morphic cohomology, coefficients, and integral models}\label{sec:models}
\subsection{Lawson homotopy groups and derived coefficients}
To specify coefficients without replacing Lawson homotopy groups by ordinary singular homology, let \(\operatorname{Sing}_\bullet\cZ_p(X)\) denote the singular simplicial abelian group, with addition taken pointwise. No free abelian group on the set of singular simplices is introduced. Its normalized Moore complex computes the homotopy groups of \(\cZ_p(X)\) by the standard theory of simplicial abelian groups and the Dold--Kan correspondence; see \cite{MaySimplicial}. Via the standard equivalence between chain complexes of abelian groups and modules over the Eilenberg--Mac~Lane spectrum \(H\Z\) \cite{SchwedeShipley}, this gives a connective \(H\Z\)-module spectrum \(\mathbf L_p(X)\) satisfying
\[
 \pi_j\mathbf L_p(X)=\pi_j\cZ_p(X)\quad(j\ge0),\qquad \pi_j\mathbf L_p(X)=0\quad(j<0).
\]
Here ``connective'' has its standard stable-homotopy meaning, and for a ring \(R\), \(HR\) denotes the Eilenberg--Mac~Lane ring spectrum.

For a commutative ring \(R\), semicolon notation means spectrum-level derived extension,
\begin{equation}\label{eq:lawson-coeff-def}
 L_pH_k(X;R):=\pi_{k-2p}\bigl(\mathbf L_p(X)\mathbin{\wedge^{\mathbf L}_{H\Z}}HR\bigr),
\end{equation}
where \(\wedge^{\mathbf L}_{H\Z}\) is the derived relative smash product of module spectra \cite{HSS,SchwedeShipley}. This is a coefficient convention on the homotopy theory of the cycle space, not a redefinition of Lawson homology as singular-chain homology. If \(R\) is flat over \(\Z\), then
\[
 L_pH_k(X;R)\cong L_pH_k(X)\otimes_\Z R.
\]
For \(R=\F_\ell\) there is instead a natural exact sequence
\begin{equation}\label{eq:lawson-uct}
 0\to L_pH_k(X)/\ell\to L_pH_k(X;\F_\ell)
 \to L_pH_{k-1}(X)[\ell]\to0.
\end{equation}
It is the homotopy long exact sequence of \(H\Z\xrightarrow{\ell}H\Z\to H\F_\ell\). In particular, finite-coefficient Lawson homology must not in general be replaced by \(L_pH_k(X)\otimes\F_\ell\).

\subsection{The representing spectrum}
For a smooth complex quasi-projective variety \(V\), Heller's representation theorem gives
\begin{equation}\label{eq:represent}
 [\Sigma^\infty V_+,\Sigma^{m,q}\MZ^{\sst}]_{\mathrm{SH}(\C)}
 \cong L^qH^m(V;\Z).
\end{equation}
The multiplication is induced by exterior products of equidimensional cycle presheaves; see \cite[Proposition~5.6]{Heller}. It yields the usual product
\begin{equation}\label{eq:morphic-product}
 L^qH^m(V;R)\otimes L^{q'}H^{m'}(V;R)
 \longrightarrow L^{q+q'}H^{m+m'}(V;R).
\end{equation}
The diagonal of \(V\) turns external multiplication into internal multiplication.

To keep track of weights, use the ordinary mapping spectra
\[
 \cA_V(q)=\Map_{\mathrm{SH}(\C)}
   (\Sigma^\infty V_+,\Sigma^{0,q}\MZ^{\sst}),\qquad q\geq0.
\]
Then \(\pi_{-m}\cA_V(q)=L^qH^m(V;\Z)\). These spectra form a weight-graded multiplicative object. They retain the null-homotopies needed for secondary operations; their homotopy groups alone do not.

The weight-zero coefficient spectrum
\[
 K:=\Map_{\mathrm{SH}(\C)}(\one,\MZ^{\sst})
\]
has \(\pi_0K\cong\Z\) and \(\pi_jK=0\) for \(j\ne0\), because weight-zero morphic cohomology of a point is ordinary cohomology; see \cite{FLcocycles,Heller}. Hence \(K\simeq H\Z\) as an associative ring spectrum. The coefficient action therefore makes every \(\cA_V(q)\) an \(H\Z\)-module, compatibly with the weight-graded products.

For a commutative coefficient ring \(R\), we similarly set
\[
 L^qH^m(V;R):=\pi_{-m}\bigl(\cA_V(q)\mathbin{\wedge^{\mathbf L}_{H\Z}}HR\bigr).
\]
Here the coefficient object \(HR\) and the derived relative smash product are taken in the standard homotopy theory of module spectra \cite{HSS,SchwedeShipley}.
For flat \(R\) this is \(L^qH^m(V;\Z)\otimes R\). For \(R=\F_\ell\), the homotopy long exact sequence of multiplication by \(\ell\) gives
\begin{equation}\label{eq:coefficients}
 0\longrightarrow L^qH^m(V;\Z)/\ell
 \longrightarrow L^qH^m(V;\F_\ell)
 \xrightarrow{\beta} L^qH^{m+1}(V;\Z)[\ell]
 \longrightarrow0.
\end{equation}
This is the cohomological counterpart of \eqref{eq:lawson-uct}; the shift is reversed because \(L^qH^m=\pi_{-m}\).

\subsection{Why associative integral cochain calculations are legitimate}
\begin{lemma}[Integral linear model]\label{lem:linear-model}
For each smooth complex variety \(V\), the weight-graded mapping spectrum \(\bigoplus_q\cA_V(q)\) is naturally an associative algebra over \(H\Z\). Hence it admits an associative weight-graded differential graded \(\Z\)-algebra model \(\cC_V\) with
\[
 H^m(\cC_V(q))\cong L^qH^m(V;\Z).
\]
No strict commutativity of an integral differential graded model is asserted or used.
\end{lemma}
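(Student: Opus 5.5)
The plan is to pass through the Schwede--Shipley equivalence between $H\Z$-module spectra and chain complexes, using three steps.

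\textbf{Step 1: an associative $H\Z$-algebra.} Heller's spectrum $\MZ^{\sst}$ is a commutative monoid in a symmetric monoidal model of $\mathrm{SH}(\C)$, for instance motivic symmetric spectra \cite{Heller}. Hence the endomorphism-type object $\bigoplus_q\Map(\Sigma^\infty V_+,\Sigma^{0,q}\MZ^{\sst})$ acquires an associative, weight-graded ring spectrum structure. Its multiplication is external product followed by pullback along the diagonal of $V$. The unit map $\one\to\Sigma^\infty V_+$ induced by $V\to\Spec\C$ gives a ring map $K\to\cA_V(0)$. We have already observed that $K\simeq H\Z$ as an associative ring spectrum. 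To avoid subtleties with centrality, the actual step will be to rigidify $K$ to a commutative model. For this I use that a connective commutative ring spectrum with $\pi_*=\Z$ concentrated in degree $0$ is equivalent, as a commutative ring spectrum, to $H\Z$; this is a Postnikov-truncation argument. Then $\bigoplus_q\cA_V(q)$ is an algebra over $K$ because $\MZ^{\sst}$ itself is a $K$-algebra: the weight-zero part of the commutative motivic ring spectrum acts centrally through the commutative structure. Naturality in $V$ comes from functoriality of $\Map(\Sigma^\infty(-)_+,-)$ in the source.

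\textbf{Step 2: pass to a differential graded model.} Shipley's theorem, refining \cite{SchwedeShipley}, gives a zig-zag of Quillen equivalences between associative $H\Z$-algebra spectra and associative differential graded $\Z$-algebras. To carry the weight grading, I apply it to the category of weight-graded objects: either view $\bigoplus_q\cA_V(q)$ as a monoid in $\Z_{\ge0}$-graded $H\Z$-modules with Day convolution, or use the many-object (enriched category) version of Shipley's theorem with objects indexed by weight. The resulting associative weight-graded dga $\cC_V$ satisfies
\[
H^m(\cC_V(q))\cong\pi_{-m}\cA_V(q)=L^qH^m(V;\Z)
\]
by \eqref{eq:represent}. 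The products match those of \eqref{eq:morphic-product}, since the equivalences are monoidal on homotopy. The Quillen equivalences are weakly monoidal rather than strong symmetric monoidal, which is exactly why only associativity is transported; this is the source of the final disclaimer.

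\textbf{Step 3 and the main obstacle.} I expect the main obstacle to be the $H\Z$-linearity together with the grading. Specifically, one must check that the weight-zero coefficient spectrum acts compatibly, so that one gets a genuine algebra over $H\Z$ and not merely an $\mathbb S$-algebra; over $\mathbb S$ the dga transport would fail integrally. The approach I would try first is to work throughout in modules over the commutative ring spectrum $K$ inside $\MZ^{\sst}$-modules, and then invoke the equivalence $K\simeq H\Z$ of commutative ring spectra. Naturality in $V$ is needed only up to the zig-zag, that is, as a functor to the homotopy category of dgas, and that is what ``naturally'' will mean here.
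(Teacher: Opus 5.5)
Your proposal is correct and follows the paper's route: Heller's commutative strict ring spectrum, then the identification $K\simeq H\Z$ making $\bigoplus_q\cA_V(q)$ an associative $H\Z$-algebra, then Shipley's Quillen equivalence with associative DGAs; you additionally identify $K$ with $H\Z$ as a \emph{commutative} ring spectrum (so the coefficient action is central and you really get an algebra in $H\Z$-modules) and run Shipley's equivalence on weight-graded objects, which fills in points the paper's short proof passes over. One small slip: the ring map $K\to\cA_V(0)$ is induced by the map $\Sigma^\infty V_+\to\one$ coming from $V\to\Spec\C$, by contravariance of $\Map$, not by a map $\one\to\Sigma^\infty V_+$.
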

\begin{proof}
Heller constructs \(\MZ^{\sst}\) as a commutative strict motivic ring spectrum representing morphic cohomology \cite[Proposition~5.6]{Heller}. By the preceding identification \(K\simeq H\Z\), the coefficient action makes \(\bigoplus_q\cA_V(q)\), after passage to ordinary spectra, an associative \(H\Z\)-algebra. Shipley's Quillen equivalence between associative \(H\Z\)-algebra spectra and differential graded \(\Z\)-algebras \cite[Theorem~1.1]{Shipley} gives the asserted associative DGA model.
\end{proof}

Naturality under pullbacks, exterior products, and realization maps is understood intrinsically at the ring-spectrum/Toda-bracket level. Whenever a cochain formula is used for one fixed variety, we choose an associative DGA representative of that multiplicative theory; we do not require one simultaneous strict DGA model for an entire diagram of varieties. For factorwise exterior-product calculations we use the multiplicative maps of the underlying spectra (or derived tensor products after choosing models), and no K\"unneth quasi-isomorphism is assumed. When reduction modulo a prime is used at cochain level, a cofibrant degreewise-flat DGA representative is chosen first; this computes the spectrum-level derived extension above. In particular, the notation \(G/\ell\) always means the quotient \(G/\ell G\), not the full finite-coefficient group. The new Lawson bracket invisible to singular homology is integral; reduction modulo two is only a detector.

\subsection{Realization, duality, and pushforward}
Heller's monoidal factorization \(\id\to Q^{\sst}\to R\operatorname{Sing}_{\C}L\operatorname{Re}_{\C}\) \cite[Theorem~4.11(2)]{Heller} gives the multiplicative realization, inducing
\[
 \cl:L^qH^m(V;R)\longrightarrow H^m(V^{\an};R).
\]
If \(V\) is smooth projective of complex dimension \(n\), Friedlander--Lawson duality gives
\begin{equation}\label{eq:duality}
 D_V:L^qH^m(V;R)\xrightarrow{\ \sim\ }
 L_{n-q}H_{2n-m}(V;R)
\end{equation}
in the ordinary Lawson range \cite{FLduality}. The underlying graphing map is a weak homotopy equivalence of cocycle and cycle spaces, so the duality is compatible with the derived coefficient extension used above. We define
\[
 x\bullet y=D_V\bigl(D_V^{-1}x\cdot D_V^{-1}y\bigr).
\]
Thus
\[
 L_pH_k(V;R)\otimes L_{p'}H_{k'}(V;R)
 \longrightarrow L_{p+p'-n}H_{k+k'-2n}(V;R).
\]
The compatibility with topological Poincar\'e duality is
\begin{equation}\label{eq:compatibility}
 \Phi\circ D_V=\PD_V\circ\cl.
\end{equation}
We also use external products, proper pushforward, and the projection formula. In particular, for \(\pi:V\times\PP^1\to V\),
\begin{equation}\label{eq:projective-push}
 \pi_*:L^qH^m(V\times\PP^1)\longrightarrow L^{q-1}H^{m-2}(V),
 \qquad \pi_*h=1,
\end{equation}
where \(h=c_1(\mathcal O_{\PP^1}(1))\). These are compatible with duality and the projective bundle formula; see \cite{FG,HuLi}.

\section{Morphic Toda brackets and Massey representatives}\label{sec:brackets}
\subsection{Triple products and signs}
Let \(a_i\in L^{q_i}H^{m_i}(V;R)\) and assume
\begin{equation}\label{eq:adjacent}
 a_1a_2=0,\qquad a_2a_3=0.
\end{equation}
Choose representatives and null-homotopies in the multiplicative model of Section~\ref{sec:models}. Their two composites with the outer classes have opposite boundaries; gluing gives a class of weight \(q_1+q_2+q_3\) and cohomological degree \(m_1+m_2+m_3-1\). The set of resulting values is
\begin{equation}\label{eq:morphic-bracket}
 \br{a_1,a_2,a_3}_{\mor}
 \subset L^{q_1+q_2+q_3}H^{m_1+m_2+m_3-1}(V;R).
\end{equation}
This is the ring-spectrum Toda operation, expressed in cohomological grading; see \cite{BauesMuro,May,Toda}.

In an associative cochain model choose closed \(A_i\) representing \(a_i\), and choose \(U,V\) with
\[
 dU=A_1A_2,\qquad dV=A_2A_3.
\]
Our sign convention is
\begin{equation}\label{eq:cochain-bracket}
 W=A_1V+(-1)^{m_1+1}UA_3.
\end{equation}

\begin{lemma}[Full triple indeterminacy]\label{lem:indeterminacy}
The cochain \(W\) is closed, and \eqref{eq:morphic-bracket} is an affine coset modulo
\begin{align}
 I_{\mor}={}&a_1L^{q_2+q_3}H^{m_2+m_3-1}(V;R)\nonumber\\
 &+L^{q_1+q_2}H^{m_1+m_2-1}(V;R)a_3.
 \label{eq:morphic-indeterminacy}
\end{align}
In particular, the bracket determines a well-defined quotient class.
\end{lemma}
\begin{proof}
The derivation rule gives
\[
 dW=(-1)^{m_1}A_1A_2A_3+(-1)^{m_1+1}A_1A_2A_3=0.
\]
Changing \(U,V\) by closed cochains \(U_0,V_0\) changes the value by
\(a_1[V_0]+(-1)^{m_1+1}[U_0]a_3\). Every element of the displayed subgroup arises in this way. Changing the representatives produces boundaries and the same two kinds of terms. The equivalent ring-spectrum description says that the difference between two null-homotopies is a loop in the appropriate mapping space; its group of components is exactly the corresponding cohomology group in \eqref{eq:morphic-indeterminacy}.
\end{proof}

\begin{definition}
A defined triple bracket \emph{vanishes} when it contains zero. This is equivalent to vanishing of its quotient class modulo \eqref{eq:morphic-indeterminacy}.
\end{definition}

\begin{proposition}[Naturality]\label{prop:naturality}
For a multiplicative map of the models, including pullback by a morphism of smooth varieties,
\[
 f^*\br{a_1,a_2,a_3}_{\mor}
 \subseteq\br{f^*a_1,f^*a_2,f^*a_3}_{\mor}.
\]
\end{proposition}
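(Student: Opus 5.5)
The proof of Proposition~\ref{prop:naturality} is a direct transport argument. A multiplicative map carries defining systems to defining systems, so every value of the source bracket maps to some value of the target bracket. I would carry this out first in an associative cochain model and then explain why it is intrinsic.

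\emph{Cochain level.} Suppose \(f^\sharp:\cC\to\cC'\) is a map of associative weight-graded DGAs modelling the multiplicative map; for a morphism \(f:V'\to V\) this is the map on the algebras \(\bigoplus_q\cA_V(q)\to\bigoplus_q\cA_{V'}(q)\) induced by precomposition with \(\Sigma^\infty f_+\). Let \(W=A_1V+(-1)^{m_1+1}UA_3\) be any value of \(\br{a_1,a_2,a_3}_{\mor}\) as in \eqref{eq:cochain-bracket}. Since \(f^\sharp\) is a chain map and multiplicative, the following hold:
\begin{itemize}
\item the cochains \(f^\sharp A_i\) are closed and represent \(f^*a_i\);
\item \(d(f^\sharp U)=f^\sharp A_1\,f^\sharp A_2\) and \(d(f^\sharp V)=f^\sharp A_2\,f^\sharp A_3\), so in particular the target adjacent products vanish and the target bracket is defined;
\item \(f^\sharp W=f^\sharp A_1\,f^\sharp V+(-1)^{m_1+1}f^\sharp U\,f^\sharp A_3\), because \(f^\sharp\) preserves degrees and hence signs.
\end{itemize}
Thus \([f^\sharp W]\) is the value of \(\br{f^*a_1,f^*a_2,f^*a_3}_{\mor}\) attached to the transported defining system. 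Since \(W\) was arbitrary, this gives the inclusion. As a consistency check, \(f^*\) sends \(I_{\mor}\) into the target indeterminacy \eqref{eq:morphic-indeterminacy}, since \(f^*(a_1y)=f^*a_1\,f^*y\).

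\emph{Model independence.} The expected obstacle is that the paper does not fix one strict DGA model for a whole diagram of varieties; by Lemma~\ref{lem:linear-model} only a multiplicative map of \(H\Z\)-algebras is given. I would handle this in two steps.

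\begin{enumerate}
\item Use Shipley's Quillen equivalence \cite{Shipley} to rectify the single map \(f^*:\bigoplus_q\cA_V(q)\to\bigoplus_q\cA_{V'}(q)\). Concretely, take a cofibrant replacement of the source DGA and a fibrant target, and realize \(f^*\) as a strict DGA map up to a zigzag of quasi-isomorphisms.
\item Show that brackets are invariant under quasi-isomorphisms of associative DGAs. This follows from the cochain computation above, applied in both directions of the zigzag: a quasi-isomorphism \(g\) gives \(g(\br{\ })\subseteq\br{g(\ )}\), and both sides are cosets of indeterminacies identified by the isomorphism \(g_*\) on cohomology, so equality holds.
\end{enumerate}

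Alternatively, the statement can be argued directly at the ring-spectrum level. There the bracket is the Toda bracket, and a multiplicative map composed with null-homotopies yields null-homotopies of the image products; the glued map then transforms compatibly, as in \cite{Toda,BauesMuro}.

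\emph{Coefficients.} For \(R\neq\Z\) the same argument applies after the derived extension \(-\wedge^{\mathbf L}_{H\Z}HR\), which is functorial in multiplicative maps.
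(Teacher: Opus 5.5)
Your proposal is correct and is essentially the paper's proof: apply the multiplicative map to the representatives \(A_i\) and the null-homotopies \(U,V\), note that the glued cochain \eqref{eq:cochain-bracket} goes to the glued cochain of the transported defining system, and vary the defining system to obtain the inclusion. Your rectification via Shipley's equivalence and the quasi-isomorphism invariance of brackets only make explicit the model-independence that the paper leaves to its conventions in Section~\ref{sec:models}.
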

\begin{proof}
Apply the map to the representatives and their chosen null-homotopies. It sends the glued representative to the representative for the transported defining system. Varying the defining system gives the inclusion.
\end{proof}

\subsection{Higher operations}
The same multiplicative model supports higher Massey--Toda operations. In a differential graded presentation one uses a compatible defining system in the sense of May \cite{May}. For an \(r\)-fold operation, an entry corresponding to the classes \(a_i,\ldots,a_j\) has weight
\(\sum_{t=i}^j q_t\) and degree \(\sum_{t=i}^j m_t-(j-i)\). Hence, whenever a full defining system exists,
\begin{equation}\label{eq:higher-degree}
 \br{a_1,\ldots,a_r}_{\mor}
 \subset L^{\sum_iq_i}H^{\sum_im_i-(r-2)}(V;R).
\end{equation}
Pairwise vanishing suffices only for \(r=3\); for \(r\geq4\), compatible choices of lower null-homotopies are required. We do not assert that the indeterminacy of a higher bracket is an affine subgroup of the simple form \eqref{eq:morphic-indeterminacy}. All nonvanishing arguments in this paper concern triple products.

\section{Transfer to Lawson homology}\label{sec:lawson}
Let \(X\) be smooth projective of dimension \(n\). For \(x_i\in L_{p_i}H_{k_i}(X;R)\), put
\[
 a_i=D_X^{-1}x_i\in L^{n-p_i}H^{2n-k_i}(X;R).
\]
\begin{definition}
When the adjacent intersection products vanish, define
\[
 \br{x_1,x_2,x_3}_L=D_X\br{a_1,a_2,a_3}_{\mor}.
\]
\end{definition}

\begin{theorem}[Degree and indeterminacy]\label{thm:lawson-degree}
The bracket takes values in
\[
 L_{p_1+p_2+p_3-2n}H_{k_1+k_2+k_3-4n+1}(X;R)
\]
and its indeterminacy is
\begin{align*}
 I_L={}&x_1\bullet L_{p_2+p_3-n}H_{k_2+k_3-2n+1}(X;R)\\
 &+L_{p_1+p_2-n}H_{k_1+k_2-2n+1}(X;R)\bullet x_3.
\end{align*}
\end{theorem}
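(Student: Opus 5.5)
The plan is to transport Lemma~\ref{lem:indeterminacy} through the duality isomorphism \eqref{eq:duality}; the content is degree bookkeeping plus the observation that \(D_X\) intertwines the morphic product with \(\bullet\) by definition.

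\emph{Degrees.} Set \(q_i=n-p_i\) and \(m_i=2n-k_i\), so that \(a_i\in L^{q_i}H^{m_i}(X;R)\). First, the hypothesis \(x_1\bullet x_2=x_2\bullet x_3=0\) is equivalent to \(a_1a_2=a_2a_3=0\), because \(x\bullet y=D_X(D_X^{-1}x\cdot D_X^{-1}y)\) and \(D_X\) is injective; hence \eqref{eq:adjacent} holds. By \eqref{eq:morphic-bracket} the morphic bracket lies in \(L^{Q}H^{M}(X;R)\) with \(Q=3n-\sum p_i\) and \(M=6n-\sum k_i-1\). Applying \(D_X\) lands in \(L_{n-Q}H_{2n-M}\), and
\[
n-Q=p_1+p_2+p_3-2n,\qquad 2n-M=k_1+k_2+k_3-4n+1,
\]
which is the asserted target group.

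\emph{Indeterminacy.} By Lemma~\ref{lem:indeterminacy}, the morphic bracket is a coset of \(I_{\mor}\). Since \(D_X\) is an additive isomorphism, the Lawson bracket is a coset of \(D_X(I_{\mor})\), so it suffices to compute \(D_X(I_{\mor})\) term by term.

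The group \(L^{q_2+q_3}H^{m_2+m_3-1}\) is carried by \(D_X\) onto \(L_{n-q_2-q_3}H_{2n-m_2-m_3+1}\), which equals \(L_{p_2+p_3-n}H_{k_2+k_3-2n+1}\). For \(u\) in this morphic group,
\[
D_X(a_1u)=D_X\bigl(D_X^{-1}x_1\cdot D_X^{-1}D_Xu\bigr)=x_1\bullet D_Xu.
\]
So the first summand of \(I_{\mor}\) maps onto \(x_1\bullet L_{p_2+p_3-n}H_{k_2+k_3-2n+1}\). The same computation with \(a_3\) multiplying on the right sends the second summand onto \(L_{p_1+p_2-n}H_{k_1+k_2-2n+1}\bullet x_3\). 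Because \(D_X\) is additive, the image of the sum of the two summands is the sum of their images, which gives \(I_L\).

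\emph{Main point of care.} The only delicate issue is range. Duality \eqref{eq:duality} is stated in the ordinary Lawson range, and the intermediate groups \(L_{p_2+p_3-n}H_{\ast}\) and \(L_{p_1+p_2-n}H_{\ast}\) must have nonnegative cycle index for \(D_X\) to be an isomorphism onto genuine Lawson groups. Following Theorem~\ref{thm:intro-construction}, the statement should be read under the hypothesis that all displayed indices are nonnegative. The conditions \(k\ge 2p\) then follow automatically, since \(m\le 2q\) holds for morphic cohomology in weight \(q\) (the bound is preserved when the degree drops by one). Outside this range one retains only the morphic statement, interpreting the Lawson groups formally through \(D_X\).
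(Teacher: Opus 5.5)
Your proof is correct and follows the paper's argument: you compute the total morphic bidegree \((3n-\sum p_i,\,6n-\sum k_i-1)\), apply \(D_X\), and make the same substitution in the two summands of the morphic indeterminacy. Your explicit checks that \(D_X(a_1u)=x_1\bullet D_Xu\) and that adjacent vanishing transfers through \(D_X\) spell out what the paper's one-line ``same substitution'' leaves implicit in the definition of \(\bullet\).
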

\begin{proof}
The total morphic weight and degree are
\[
 q=3n-(p_1+p_2+p_3),\qquad
 m=6n-(k_1+k_2+k_3)-1.
\]
Duality sends these to cycle index \(n-q=p_1+p_2+p_3-2n\) and homological degree
\(2n-m=k_1+k_2+k_3-4n+1\). Applying the same substitution to the two groups in \eqref{eq:morphic-indeterminacy} gives the asserted formula for \(I_L\).
\end{proof}

\begin{corollary}[Higher Lawson degree]
An \(r\)-fold Lawson bracket, whenever defined, has target
\[
 L_{\sum_i p_i-(r-1)n}
 H_{\sum_i k_i-2(r-1)n+(r-2)}(X;R).
\]
\end{corollary}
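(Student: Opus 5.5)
The plan is to obtain the corollary from the morphic degree formula \eqref{eq:higher-degree} by the same duality substitution used in the proof of Theorem~\ref{thm:lawson-degree}. I first fix the meaning of the statement. An \(r\)-fold Lawson bracket is defined as
\[
 \br{x_1,\ldots,x_r}_L:=D_X\br{a_1,\ldots,a_r}_{\mor},
 \qquad a_i=D_X^{-1}x_i\in L^{n-p_i}H^{2n-k_i}(X;R),
\]
and it is defined exactly when the morphic classes \(a_i\) admit a compatible May defining system in the multiplicative model of Section~\ref{sec:models}. This is the natural extension of the triple definition, and the phrase ``whenever defined'' refers to this existence condition.

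Given a defining system, \eqref{eq:higher-degree} places the morphic bracket in weight
\[
 q=\sum_i(n-p_i)=rn-\sum_ip_i
\]
and in cohomological degree
\[
 m=\sum_i(2n-k_i)-(r-2)=2rn-\sum_ik_i-(r-2).
\]
Friedlander--Lawson duality \eqref{eq:duality} sends \(L^qH^m\) to \(L_{n-q}H_{2n-m}\). The cycle index is therefore
\[
 n-q=\sum_ip_i-(r-1)n.
\]
The homological degree is
\[
 2n-m=\sum_ik_i-2(r-1)n+(r-2).
\]
This is the asserted target. As a check, setting \(r=3\) recovers Theorem~\ref{thm:lawson-degree}.

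The one point needing care is the range of validity. Duality \eqref{eq:duality} is stated in the ordinary Lawson range. When the resulting cycle index is negative, the statement must be read as in Theorem~\ref{thm:intro-construction}: the operation is a morphic one, and the Lawson notation is formal. I will add this remark explicitly. No indeterminacy claim is needed, since the corollary concerns only the target group; this is consistent with the caveat after \eqref{eq:higher-degree}. I expect no genuine obstacle beyond this bookkeeping.
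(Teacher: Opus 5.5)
Your proposal is correct and follows the paper's own proof exactly: the paper's argument is simply to apply duality to \eqref{eq:higher-degree}, and your substitution \(q_i=n-p_i\), \(m_i=2n-k_i\) followed by \((q,m)\mapsto(n-q,2n-m)\) spells that out. Your arithmetic is right (it recovers Theorem~\ref{thm:lawson-degree} at \(r=3\)), and your range caveat matches the paper's convention for negative cycle indices.
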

\begin{proof}
Apply duality to \eqref{eq:higher-degree}.
\end{proof}

\begin{remark}[Smoothness and range]
The smoothness assumption supplies the duality isomorphism, not merely a convenient presentation of the product. For a singular projective variety, the argument does not define an intersection-based bracket on all ordinary Lawson groups. One must instead work in an appropriate cohomological or bivariant theory, or supply an additional duality hypothesis. Similarly, a negative cycle index is not silently interpreted as an ordinary Lawson group.
\end{remark}

\section{Comparison with singular homology}\label{sec:comparison}
For a closed oriented manifold \(M\), define its singular homology bracket by
\[
 \br{u_1,u_2,u_3}_{\sing}
 =\PD_M\br{\PD_M^{-1}u_1,\PD_M^{-1}u_2,\PD_M^{-1}u_3}_{C^*(M;R)}.
\]
\begin{theorem}[Cycle-map comparison]\label{thm:comparison}
For \(X\) and \(x_i\) as in Theorem~\ref{thm:lawson-degree},
\[
 \Phi\bigl(\br{x_1,x_2,x_3}_L\bigr)
 \subseteq\br{\Phi(x_1),\Phi(x_2),\Phi(x_3)}_{\sing}.
\]
Moreover, \(\Phi(I_L)\subseteq I_{\sing}\).
\end{theorem}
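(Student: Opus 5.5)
The plan is to reduce the statement to naturality of Toda brackets under the multiplicative realization map, and then transport the result through the two dualities using \eqref{eq:compatibility}. Write \(a_i=D_X^{-1}x_i\). By \eqref{eq:compatibility}, \(\PD_X^{-1}\Phi(x_i)=\cl(a_i)\). Since \(\cl\) is multiplicative, the classes \(\cl(a_1)\cl(a_2)=\cl(a_1a_2)=0\) and \(\cl(a_2)\cl(a_3)=0\). So the singular bracket \(\br{\cl(a_1),\cl(a_2),\cl(a_3)}_{C^*(X^{\an};R)}\) is defined, and the singular homology bracket on the right is \(\PD_X\) of it.

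The key step is an inclusion
\[
 \cl\br{a_1,a_2,a_3}_{\mor}\subseteq\br{\cl a_1,\cl a_2,\cl a_3}_{C^*(X^{\an};R)}.
\]
We obtain it by applying Proposition~\ref{prop:naturality} to the multiplicative map of models given by Heller's monoidal factorization \(\id\to Q^{\sst}\to R\operatorname{Sing}_\C L\operatorname{Re}_\C\).

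Concretely, \(\cl\) is induced by a map of associative ring spectra from \(\bigoplus_q\cA_X(q)\) to the function spectrum \(\Map(\Sigma^\infty X^{\an}_+,H\Z)\), after forgetting weights and smashing with \(HR\). The target is modelled by \(C^*(X^{\an};R)\) as an associative \(H\Z\)-algebra. By Shipley's equivalence, as in Lemma~\ref{lem:linear-model}, this map is represented, up to a zigzag of quasi-isomorphisms of DGAs, by a DGA map \(\cC_X\otimes R\to C^*(X^{\an};R)\). Toda brackets are invariant under DGA quasi-isomorphisms, since a defining system can be transported forward, and in the other direction lifted up to boundaries. Applying such a map to a defining system \((A_i,U,V)\) gives a defining system for the images, and it sends \(W\) to the corresponding \(W\). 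This yields the inclusion. Applying \(\PD_X\) and \eqref{eq:compatibility} to both sides then gives
\[
 \Phi\br{x_1,x_2,x_3}_L\subseteq\br{\Phi x_1,\Phi x_2,\Phi x_3}_{\sing}.
\]

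For the indeterminacy, by Theorem~\ref{thm:lawson-degree} it suffices to show that \(\Phi(x_1\bullet y)=\Phi(x_1)\cdot\Phi(y)\) for the Poincaré-dual singular intersection product. This follows from \eqref{eq:compatibility} together with the multiplicativity of \(\cl\):
\[
 \Phi(x_1\bullet y)=\PD\bigl(\cl(a_1)\cl(D^{-1}y)\bigr).
\]
Hence \(\Phi(I_L)\subseteq I_{\sing}\), and the second summand is handled symmetrically. Consequently \(\Phi\) induces a map of quotients, sending the coset class of \(\br{x_1,x_2,x_3}_L\) modulo \(I_L\) to that of the singular bracket modulo \(I_{\sing}\).

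The main obstacle is the second step: ensuring that realization is an \(A_\infty\)/associative ring map at spectrum level, and not merely multiplicative on homotopy groups. The latter would not control null-homotopies. I would rely on the monoidality of Heller's factorization, and on the fact that Toda brackets in a ring spectrum are preserved by ring maps. This avoids choosing a strict DGA map, and the cochain formula \eqref{eq:cochain-bracket} is used only as a representative on each side.
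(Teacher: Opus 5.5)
Your proposal is correct and follows essentially the same route as the paper. Both arguments apply Proposition~\ref{prop:naturality} to Heller's multiplicative realization, identify the target bracket with the ordinary Massey formula in the Eilenberg--Mac~Lane (cochain) model, transport the inclusion to homology via \eqref{eq:compatibility}, and use multiplicativity of \(\cl\) to send each indeterminacy summand into its singular counterpart. Your additional remarks, on invariance of brackets under DGA quasi-isomorphisms and on needing ring-level rather than homotopy-level multiplicativity, make explicit what the paper's short proof leaves implicit.
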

\begin{proof}
Apply Proposition~\ref{prop:naturality} to the multiplicative realization of Section~\ref{sec:models}. In the Eilenberg--Mac Lane target (equivalently, in the associated derived category of chain complexes \cite{SchwedeShipley}), its cochain expression is precisely the ordinary Massey formula \eqref{eq:cochain-bracket}. Equation~\eqref{eq:compatibility} transfers the resulting inclusion to homology. Multiplicativity sends each of the two indeterminacy terms into the corresponding singular term.
\end{proof}

\begin{remark}
Naturality gives an inclusion, not in general equality. Singular null-homotopies need not lift to morphic null-homotopies. This distinction is one possible source of a finer secondary invariant.
\end{remark}

\begin{corollary}[Rational comparison]\label{cor:rational-formality}
On a smooth complex projective variety, every defined rational singular Massey product vanishes. Thus a nonzero rational Lawson quotient class, if constructed, would automatically map to zero in the corresponding rational singular-homology Massey quotient.
\end{corollary}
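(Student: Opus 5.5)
The plan is to derive the corollary from the Deligne--Griffiths--Morgan--Sullivan theorem \cite{DGMS}, combined with the rational version of Theorem~\ref{thm:comparison}.

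\emph{First assertion.} Let \(X\) be smooth complex projective. Then \(X^{\an}\) is a compact K\"ahler manifold, so its real (hence rational) cochain algebra is formal. Formality over \(\Q\) rather than \(\R\) follows from the DGMS descent argument (or Sullivan's theorem that formality is independent of the field extension). I use formality through minimal models. A Sullivan model of \(A_{PL}(X;\Q)\) is then quasi-isomorphic to \((H^*(X;\Q),0)\). Rational Massey brackets in \(C^*(X;\Q)\) agree with those computed in \(A_{PL}\), because the two are connected by a zigzag of multiplicative quasi-isomorphisms and Proposition~\ref{prop:naturality}-type naturality holds in both directions for quasi-isomorphisms. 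In a DGA with zero differential every defined bracket contains \(0\): take \(U=V=0\), which gives \(W=0\). Since a quasi-isomorphism induces a bijection of brackets (the inclusion from naturality, together with equality of indeterminacies), every defined rational triple bracket contains zero. Transporting through \(\PD_X\) gives the same conclusion for the singular homology bracket \(\br{\,\cdot\,}_{\sing}\) of Section~\ref{sec:comparison}.

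\emph{Second assertion.} Let \(x_i\in L_{p_i}H_{k_i}(X;\Q)\) with the Lawson bracket defined. By Theorem~\ref{thm:comparison}, \(\Phi\) maps every value of \(\br{x_1,x_2,x_3}_L\) into \(\br{\Phi(x_1),\Phi(x_2),\Phi(x_3)}_{\sing}\), and it maps \(I_L\) into \(I_{\sing}\). So it induces a map of quotient classes sending the Lawson class to the singular class. By the first assertion the singular bracket contains \(0\), so the singular class is zero. Hence the image of any rational Lawson quotient class, nonzero or not, is zero.

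\emph{Main obstacle.} The only delicate point is that the bracket is invariant under the zigzag of quasi-isomorphisms relating singular cochains to the formal model. Naturality alone gives only an inclusion, and I need equality so that "contains zero" transfers. I would handle this in the standard way: a quasi-isomorphism \(f\) induces isomorphisms on the indeterminacy groups, and \(f^*\) of a bracket is a full coset of \(f^*I\). So the inclusion of cosets modulo the same subgroup is an equality. This is the classical invariance in \cite{May}, which I would cite rather than reprove.
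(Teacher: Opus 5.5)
Your proposal is correct and follows the same route as the paper. DGMS formality of the compact K\"ahler manifold $X^{\an}$ gives vanishing of every defined rational singular Massey product, and then Theorem~\ref{thm:comparison} together with $\Phi(I_L)\subseteq I_{\sing}$ forces the image of any rational Lawson quotient class to be zero. The paper states this tersely; your added details are exactly the standard facts it leaves implicit, namely quasi-isomorphism invariance of triple brackets (via equality of cosets of the same indeterminacy) and descent of formality from $\R$ to $\Q$.
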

\begin{proof}
The analytic manifold is compact K\"ahler. The formality theorem of Deligne--Griffiths--Morgan--Sullivan \cite{DGMS} implies the vanishing of its defined rational Massey products. Now use Theorem~\ref{thm:comparison} and pass to quotients by indeterminacy.
\end{proof}

\section{A nonzero bracket detected by singular homology}\label{sec:ekedahl}
We retain the finite-coefficient construction because it provides a different kind of nonvanishing from the integral example below.

\begin{theorem}[Ekedahl]\label{thm:ekedahl}
For some prime \(\ell\), there is a smooth complex projective surface \(S_0\) and classes \(\alpha_i\in H^1(S_0;\F_\ell)\) such that
\[
 \alpha_1\alpha_2=\alpha_2\alpha_3=0,
 \qquad 0\notin\br{\alpha_1,\alpha_2,\alpha_3}\subset H^2(S_0;\F_\ell).
\]
\end{theorem}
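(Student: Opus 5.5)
The plan is to construct \(S_0\) as a free quotient of a simply connected surface by a finite group whose mod-\(\ell\) cohomology already carries a nonvanishing triple Massey product. Then pull that product back along the classifying map and check that neither the product nor its indeterminacy is lost. Everything takes place in ordinary singular cohomology with \(\F_\ell\) coefficients, so none of the morphic machinery of Sections~\ref{sec:models}--\ref{sec:comparison} is needed here.

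\emph{Step 1: the group.} Fix an odd prime \(\ell\) and let \(G=U_4(\F_\ell)\) be the group of upper unitriangular \(4\times4\) matrices over \(\F_\ell\). Its abelianization is \(\F_\ell^3\), spanned by the three superdiagonal entries, so we may take \(\alpha_1,\alpha_2,\alpha_3\in H^1(G;\F_\ell)=\operatorname{Hom}(G,\F_\ell)\) to be these coordinates. The adjacent products \(\alpha_1\alpha_2\) and \(\alpha_2\alpha_3\) vanish, because the \((1,3)\) and \((2,4)\) entries give cochains bounding them. By the Dwyer correspondence between defining systems and lifts of \(\bar\rho\colon G\to U_3\times_{U_2}U_3\), the bracket \(\br{\alpha_1,\alpha_2,\alpha_3}\) contains zero exactly when \(\bar\rho\) lifts to \(U_4(\F_\ell)\) after twisting by homomorphisms into the corner entries. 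I would verify that this fails, so that \(0\notin\br{\alpha_1,\alpha_2,\alpha_3}\subset H^2(G;\F_\ell)\). The point is that the obstruction is the class of the central extension \(1\to\F_\ell\to U_5\)-type extension restricted along \(\bar\rho\), and it is nonzero modulo the indeterminacy \(\alpha_1H^1+H^1\alpha_3\); this is the standard computation, cf.\ \cite{May}. If \(U_4\) turned out to be inconvenient, I would instead use any finite \(\ell\)-group known to have a nonzero triple Massey product in degree \(2\).

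\emph{Step 2: a Godeaux--Serre surface.} Choose a faithful linear representation \(G\subset GL_{N+1}(\C)\) and take \(Y\subset\PP^N\) to be a \(G\)-invariant smooth complete intersection surface on which \(G\) acts freely. Serre's construction provides one, since the fixed loci have high codimension for \(N\) large and a general invariant complete intersection of high degree avoids them. Put \(S_0=Y/G\). This is a smooth projective surface. By the Lefschetz hyperplane theorem \(Y\) is simply connected, so \(\pi_1(S_0)=G\).

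\emph{Step 3: transfer of the bracket.} Let \(f\colon S_0\to BG\) be the classifying map. The Cartan--Leray (Hochschild--Serre) spectral sequence of \(Y\to S_0\to BG\), combined with \(H^1(Y;\F_\ell)=0\), yields the exact sequence
\[
0\to H^1(G)\to H^1(S_0)\to H^1(Y)^G=0\to H^2(G)\to H^2(S_0).
\]
Hence \(f^*\) is an isomorphism on \(H^1\) and injective on \(H^2\). By naturality of Massey products, \(f^*\br{\alpha_1,\alpha_2,\alpha_3}\subseteq\br{f^*\alpha_1,f^*\alpha_2,f^*\alpha_3}\), and the adjacent products still vanish on \(S_0\). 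Since \(f^*\) is bijective on \(H^1\), it maps the group-side indeterminacy \(\alpha_1H^1(G)+H^1(G)\alpha_3\) onto the surface-side indeterminacy. Both brackets are cosets of their indeterminacies, so the inclusion is an equality of cosets. Injectivity on \(H^2\) then gives \(0\notin\br{f^*\alpha_1,f^*\alpha_2,f^*\alpha_3}\).

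\emph{Main obstacle.} I expect the hardest step to be Step~1. One must verify rigorously that the chosen group's bracket avoids zero modulo its \emph{full} indeterminacy, and not merely for one defining system, and also check the sign conventions of \eqref{eq:cochain-bracket}. Steps~2 and~3 are standard once the group is fixed.
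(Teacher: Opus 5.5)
\paragraph{Step 1 fails.} Your Steps~2 and~3 are sound. Your overall architecture is the route the paper attributes to Ekedahl: a finite $\ell$-group with a nonvanishing degree-two triple product, realized as $\pi_1$ of a Godeaux--Serre quotient $Y/G$ of a simply connected complete intersection, and transported along $S_0\to BG$ using $H^1(Y;\F_\ell)=0$. The paper itself only cites \cite{Ekedahl} and \cite[Theorem~120(i)]{Catanese}. The gap is Step~1: for $G=U_4(\F_\ell)$ with $\alpha_i=a_{i,i+1}$ the superdiagonal coordinates, the bracket \emph{contains} zero. Matrix multiplication gives $(gh)_{14}=g_{14}+h_{14}+g_{12}h_{24}+g_{13}h_{34}$, so $\delta a_{14}=-(a_{12}\cup a_{24}+a_{13}\cup a_{34})$. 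Now take $U=-a_{13}$ and $V=-a_{24}$, which satisfy $dU=A_1A_2$ and $dV=A_2A_3$. The value $A_1V+UA_3$ of \eqref{eq:cochain-bracket} is then exactly the coboundary $\delta a_{14}$. In your own Dwyer formulation, $\bar\rho\colon U_4\to U_4/Z$ lifts tautologically through the identity of $U_4$, so the obstruction is zero. (The relevant obstruction is the pullback of $1\to Z\to U_4\to U_4/Z\to1$; a $U_5$-type extension governs fourfold products, not triple ones.) Step~1 is the only place where nonvanishing is produced. Your fallback, ``any $\ell$-group known to have a nonzero triple product,'' presupposes exactly the missing input, so the proposal does not yet prove the theorem.

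\paragraph{Repair.} Divide out the corner. Take $G=\bar U_4(\F_\ell)=U_4(\F_\ell)/Z$, with $Z$ the $(1,4)$-subgroup. The functions $a_{12},a_{23},a_{34},a_{13},a_{24}$ descend to $G$, so the adjacent products still vanish and the bracket is defined. By the correspondence you cite, it contains zero iff some homomorphism $\phi\colon G\to U_4(\F_\ell)$ has superdiagonal $(a_{12},a_{23},a_{34})$. Suppose such a $\phi$ existed. Then $\psi=\pi\circ\phi$ would induce the identity on $G/[G,G]\cong\F_\ell^3$, so it would be an automorphism by Burnside's basis theorem. Hence $\phi\circ\psi^{-1}$ would split the central extension $1\to Z\to U_4\to G\to 1$. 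A split central extension is a direct product, so $Z\cap[U_4,U_4]=1$. But $Z$ is generated by the commutator of the elementary matrices $1+E_{12}$ and $1+E_{24}$, a contradiction. Hence $0\notin\br{\alpha_1,\alpha_2,\alpha_3}\subset H^2(G;\F_\ell)$ for every prime $\ell$, and your Steps~2--3 transport this to $S_0$ verbatim.
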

This is Ekedahl's construction \cite{Ekedahl}; see also \cite[Theorem~120(i)]{Catanese}. It uses finite-group cohomology and a projective-surface realization of a finite fundamental group.

\begin{lemma}[Low-weight comparison]\label{lem:low-weight}
For every smooth complex projective variety \(V\),
\[
 L^1H^1(V;\Z)=H^1(V;\Z),\qquad L^1H^2(V;\Z)=\NS(V),
\]
and \(L^1H^1(V;\F_\ell)\to H^1(V;\F_\ell)\) is an isomorphism. For a smooth projective surface \(S_0\), the map
\[
 L^2H^2(S_0;\F_\ell)\longrightarrow H^2(S_0;\F_\ell)
\]
is also an isomorphism.
\end{lemma}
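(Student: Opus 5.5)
Via Friedlander--Lawson duality \eqref{eq:duality}, \(L^qH^m(V)\cong L_{n-q}H_{2n-m}(V)\), and each claim becomes a statement about Lawson homology in a range where it is classical.

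\emph{Weight one, integral coefficients.} For \(q=1\) the group \(L^1H^m(V;\Z)\) is dual to \(L_{n-1}H_{2n-m}(V)\), i.e.\ to homotopy groups of the space of divisors \(\cZ_{n-1}(V)\). For divisors on a smooth projective variety there is the Friedlander/Lawson computation: \(\cZ_{n-1}(V)\) is homotopy equivalent to a product involving \(\Pic^0\)-type tori and \(\NS\). This gives
\[
L_{n-1}H_{2n-2}(V)=\NS(V),\qquad L_{n-1}H_{2n-1}(V)=H_{2n-1}(V;\Z),
\]
with all higher homotopy groups vanishing. One can also quote \cite[Theorem~8]{FLcocycles} directly for the morphic identifications \(L^1H^1=H^1\) and \(L^1H^2=\NS\). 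Compatibility \eqref{eq:compatibility} identifies the map to \(H^1\) with \(\cl\). Concretely, \(\pi_1\) of the divisor space is \(\pi_1\) of the Picard variety, namely \(H^1(V;\Z)\).

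\emph{Weight one, \(\F_\ell\) coefficients.} Here I use the exact sequence \eqref{eq:coefficients} with \(q=m=1\):
\[
0\to L^1H^1(V;\Z)/\ell\to L^1H^1(V;\F_\ell)\to L^1H^2(V;\Z)[\ell]\to0.
\]
I compare it with the corresponding singular sequence \(0\to H^1/\ell\to H^1(\F_\ell)\to H^2[\ell]\to0\). The realization map \(\cl\) is a map of \(H\Z\)-modules, so it gives a morphism of these sequences. The left vertical map is an isomorphism by the integral statement. The right vertical map is \(\NS(V)[\ell]\to H^2(V;\Z)[\ell]\). It is an isomorphism because \(H^2(V;\Z)_{\mathrm{tors}}\) injects into \(\NS(V)\) and \(\NS\subset H^2\) (Lefschetz \((1,1)\) together with torsion of \(H^2\) being algebraic). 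The five lemma finishes the proof.

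\emph{Surface, weight two.} For \(\dim S_0=2\), duality gives \(L^2H^2(S_0;\F_\ell)\cong L_0H_2(S_0;\F_\ell)\). For \(0\)-cycles, the Dold--Thom theorem identifies \(\cZ_0(S_0)\) with the infinite symmetric product, so \(L_0H_k=H_k\) integrally. Hence, after \(\wedge^{\mathbf L}_{H\Z}H\F_\ell\), also \(L_0H_k(\F_\ell)=H_k(\F_\ell)\), and \eqref{eq:compatibility} transports this to the morphic map to \(H^2(S_0;\F_\ell)\).

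\emph{Main obstacle.} The expected obstacle is checking that the cycle maps under these identifications coincide with the realization \(\cl\) and that the derived coefficient extension commutes with duality. I plan to handle this with the remark in Section~\ref{sec:models} that the graphing map is a weak equivalence of \(H\Z\)-module spectra, together with naturality of \(\Phi\).
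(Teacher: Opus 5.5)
Your proposal is correct and follows the paper's proof essentially step for step. Weight one comes from \cite[Theorem~8]{FLcocycles}. The \(\F_\ell\) case compares \eqref{eq:coefficients} with the singular coefficient sequence, using the exponential sequence: the kernel of \(c_1\) on \(\Pic(V)\) is \(\Pic^0(V)\), so \(\NS(V)\to H^2(V;\Z)\) is injective, and torsion dies in \(H^2(V,\mathcal O_V)\), so it lies in the image. The surface case is duality plus Dold--Thom.

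One harmless slip, in an aside you never use: the divisor space does not have vanishing higher homotopy, since \(\pi_2\cZ_{n-1}(V)=L_{n-1}H_{2n}(V)\cong\Z\).
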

\begin{proof}
The integral weight-one computation is \cite[Theorem~8]{FLcocycles}. The exponential sequence identifies the torsion of \(\NS(V)\) with the torsion of \(H^2(V;\Z)\): the map to \(H^2(V,\mathcal O_V)\) kills torsion, and \(\Pic^0(V)\) is the kernel of the integral first Chern class. Comparing \eqref{eq:coefficients} with the corresponding singular coefficient sequence gives the finite-coefficient weight-one isomorphism.

For a surface, duality and Dold--Thom give
\[
 L^2H^2(S_0;\F_\ell)\cong L_0H_2(S_0;\F_\ell)
 \cong H_2(S_0;\F_\ell).
\]
Equation~\eqref{eq:compatibility} identifies the cycle map with Poincar\'e duality, proving the last assertion.
\end{proof}

\begin{theorem}\label{thm:ekedahl-lawson}
The threefold \(X_\ell=S_0\times\PP^1\) has classes
\(x_i\in L_2H_5(X_\ell;\F_\ell)\) with
\[
 0\notin\br{x_1,x_2,x_3}_L\subset L_0H_4(X_\ell;\F_\ell).
\]
\end{theorem}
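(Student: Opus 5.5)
Start from Ekedahl's classes \(\alpha_i\in H^1(S_0;\F_\ell)\) (Theorem~\ref{thm:ekedahl}). By Lemma~\ref{lem:low-weight}, they lift uniquely to morphic classes \(b_i\in L^1H^1(S_0;\F_\ell)\) with \(\cl(b_i)=\alpha_i\).

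\textbf{Lifting to the threefold.} Let \(\pi:X_\ell=S_0\times\PP^1\to S_0\) be the projection and set
\[
a_i=\pi^*b_i\,h\in L^2H^3(X_\ell;\F_\ell),
\]
where \(h=c_1(\mathcal O_{\PP^1}(1))\). Then \(x_i=D_{X_\ell}a_i\in L_{3-2}H_{6-3}\)? We must check that this matches the claimed \(L_2H_5\). Lying in \(L_2H_5\) of a threefold means morphic bidegree \((q,m)=(1,1)\). So the choice should instead be simply \(a_i=\pi^*b_i\in L^1H^1(X_\ell;\F_\ell)\).

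\textbf{Vanishing of adjacent products and location of the bracket.} The adjacent products vanish because \(b_1b_2\in L^2H^2(S_0;\F_\ell)\) maps isomorphically to \(\alpha_1\alpha_2=0\) under Lemma~\ref{lem:low-weight}; similarly \(b_2b_3=0\), and these vanishings are then pulled back by \(\pi^*\). By Theorem~\ref{thm:lawson-degree}, the bracket lies in morphic bidegree \((3,2)\), i.e. in \(L_0H_4(X_\ell;\F_\ell)\), as claimed. By Proposition~\ref{prop:naturality}, \(\pi^*\br{b_1,b_2,b_3}_{\mor}\subseteq\br{a_1,a_2,a_3}_{\mor}\), so the bracket is defined.

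\textbf{Nonvanishing via singular homology.} To show \(0\notin\br{x_1,x_2,x_3}_L\), suppose the contrary. Theorem~\ref{thm:comparison} then gives
\[
0\in\br{\pi^*\alpha_1,\pi^*\alpha_2,\pi^*\alpha_3}_{\sing}\subset H^2(X_\ell;\F_\ell).
\]
Restrict along a section \(s:S_0\to X_\ell\), \(s(y)=(y,pt)\). Naturality of singular Massey products together with \(s^*\pi^*=\id\) yields
\[
0\in\br{\alpha_1,\alpha_2,\alpha_3}_{S_0},
\]
contradicting Ekedahl's theorem. This also shows that the singular bracket is nonzero.

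\textbf{Why the product with \(\PP^1\).} Presumably it is needed to keep the Lawson indices in the ordinary range. Indeed, on \(S_0\) itself, \(x_i\in L_1H_3(S_0)\) and the bracket would land in \(L_{3-4}\), which is negative. On the threefold, by contrast, the indices come out as \(p=2+2+2-6=0\) and \(k=15-12+1=4\), both nonnegative, so the bracket is an operation on ordinary Lawson groups.

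\textbf{Main obstacle.} The delicate point is justifying Massey-product naturality for the section \(s\) at the mod-\(\ell\) cochain level (taking care with derived coefficients), and checking that the comparison map of Theorem~\ref{thm:comparison} is compatible with Poincaré duality on \(X_\ell\). Since we work entirely in cohomology via \(D\), this reduces to the cohomological inclusion, which is standard.
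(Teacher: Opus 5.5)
Your argument is correct and uses the paper's ingredients: the unique lift from Lemma~\ref{lem:low-weight}, pullback along \(\pi\), a section of \(\pi\), naturality, and the cycle-map comparison. You apply them in the opposite order. The paper first shows the morphic bracket on \(S_0\) is nonzero by comparison with Ekedahl, then retracts along the section at the morphic level; you apply Theorem~\ref{thm:comparison} on \(X_\ell\) and retract in singular cohomology, which has the small bonus of directly showing that the singular bracket on \(X_\ell\) is nonzero. In the write-up, simply delete the abandoned choice \(a_i=\pi^*b_i\,h\).
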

\begin{proof}
Lift the \(\alpha_i\) uniquely to \(\widetilde\alpha_i\in L^1H^1(S_0;\F_\ell)\). Their adjacent products lie in \(L^2H^2(S_0;\F_\ell)\) and vanish by Lemma~\ref{lem:low-weight}, since their cycle classes vanish. The morphic bracket is therefore defined, and comparison with Ekedahl's bracket shows it does not contain zero.

Pull back along \(\pi:X_\ell\to S_0\). This projection has a section. If the pulled-back morphic bracket contained zero, restriction along the section and naturality would put zero in the original bracket on \(S_0\), a contradiction. Duality in dimension three sends these classes \(L^1H^1\) to \(L_2H_5\) and the target \(L^3H^2\) to \(L_0H_4\).
\end{proof}

\begin{corollary}
For every \(r\geq1\), \(S_0\times\PP^r\) admits a nonzero bracket
\[
 \bigl(L_{r+1}H_{2r+3}(-;\F_\ell)\bigr)^3
 \dashrightarrow L_{r-1}H_{2r+2}(-;\F_\ell).
\]
\end{corollary}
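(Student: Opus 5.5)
The plan is to copy the proof of Theorem~\ref{thm:ekedahl-lawson}, replacing \(\PP^1\) by \(\PP^r\). Put \(X_r=S_0\times\PP^r\), of dimension \(n=r+2\), and let \(\pi:X_r\to S_0\) be the projection. Let \(\widetilde\alpha_i\in L^1H^1(S_0;\F_\ell)\) be the unique lifts of Ekedahl's classes from Theorem~\ref{thm:ekedahl}. The proof of Theorem~\ref{thm:ekedahl-lawson} already shows two things. First, the adjacent products vanish in \(L^2H^2(S_0;\F_\ell)\), by Lemma~\ref{lem:low-weight}. Second, \(0\notin\br{\widetilde\alpha_1,\widetilde\alpha_2,\widetilde\alpha_3}_{\mor}\subset L^2H^2(S_0;\F_\ell)\). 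For this second point we map to singular cohomology via \(\cl\): by Proposition~\ref{prop:naturality} applied to the multiplicative realization, \(\cl\) carries the morphic bracket into Ekedahl's bracket. Since \(L^2H^2(S_0;\F_\ell)\to H^2(S_0;\F_\ell)\) is an isomorphism, a zero in the morphic bracket would give a zero in Ekedahl's bracket.

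Next we pull back. Set \(a_i=\pi^*\widetilde\alpha_i\in L^1H^1(X_r;\F_\ell)\). Their adjacent products are pullbacks of zero, so they vanish and the bracket \(\br{a_1,a_2,a_3}_{\mor}\subset L^2H^2(X_r;\F_\ell)\) is defined. Choose a point \(s\in\PP^r\) and the section \(\sigma:S_0\to X_r\), \(\sigma(y)=(y,s)\). Then \(\sigma^*\pi^*=\id\). By Proposition~\ref{prop:naturality},
\[
\sigma^*\br{a_1,a_2,a_3}_{\mor}\subseteq\br{\widetilde\alpha_1,\widetilde\alpha_2,\widetilde\alpha_3}_{\mor}.
\]
If \(0\) lay in the left-hand bracket, then \(\sigma^*0=0\) would lie in the right-hand one, which is false. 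So \(0\notin\br{a_1,a_2,a_3}_{\mor}\).

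Finally we transfer by \(D_{X_r}\) and check degrees. With \(n=r+2\), the weight \(1\), degree \(1\) classes go to \(L_{n-1}H_{2n-1}=L_{r+1}H_{2r+3}\). The target \(L^3H^2\) (weight \(3\), degree \(1+1+1-1=2\)) goes to \(L_{n-3}H_{2n-2}=L_{r-1}H_{2r+2}\). This agrees with Theorem~\ref{thm:lawson-degree}:
\[
3(r+1)-2(r+2)=r-1,\qquad 3(2r+3)-4(r+2)+1=2r+2.
\]
The cycle index \(r-1\) is nonnegative because \(r\geq1\), so the target is an ordinary Lawson group. Since \(D_{X_r}\) is an isomorphism, the Lawson bracket \(\br{x_1,x_2,x_3}_L\), with \(x_i=D_{X_r}a_i\), does not contain zero. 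It is therefore nonzero in the sense of this paper.

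There is no real obstacle here. The only points to watch are that a zero of the bracket is carried to a zero by naturality, and that the Lawson range holds; both are checked above.
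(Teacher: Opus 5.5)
Your proposal is correct and is the paper's own argument: pull Ekedahl's morphic bracket back along \(S_0\times\PP^r\to S_0\), use a section and naturality to exclude zero, and dualize with \(n=r+2\). One slip to fix: the morphic brackets on \(S_0\) and \(X_r\) lie in \(L^3H^2\), not \(L^2H^2\); only the adjacent products live in \(L^2H^2\), and your last paragraph computes the target correctly. Also, nonvanishing on \(S_0\) needs no isomorphism on the target, since \(\cl\) sends \(0\) to \(0\) and naturality then puts \(0\) in Ekedahl's bracket.
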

\begin{proof}
Repeat the pullback-and-section argument and apply duality with \(n=r+2\).
\end{proof}

For \(r=1\), the target is \(L_0H_4\cong H_4\). Thus the construction proves nonvanishing but not invisibility under the Lawson cycle map. Raising the Lawson cycle index by adding projective factors does not change that distinction.

\section{What invisibility under the Lawson cycle map requires}\label{sec:criterion}
Put \(Q=q_1+q_2+q_3\) and \(M=m_1+m_2+m_3-1\). A defined triple bracket determines a class in
\begin{equation}\label{eq:quotient-map}
 L^QH^M(X;R)/I_{\mor}
 \longrightarrow H^M(X^{\an};R)/I_{\sing}.
\end{equation}
\begin{definition}\label{def:invisible}
A morphic triple bracket is \emph{invisible to singular cohomology} if its class in the morphic quotient on the left of \eqref{eq:quotient-map} is nonzero and its image in the singular-cohomology quotient on the right is zero. For a smooth projective variety, the Lawson-dual bracket is \emph{invisible to singular homology} if its Lawson quotient class is nonzero while its image under the Lawson cycle map is zero in the Poincar\'e-dual singular-homology Massey quotient. By \eqref{eq:compatibility}, these formulations are equivalent under Friedlander--Lawson and Poincar\'e duality.
\end{definition}
A nonzero element in the kernel of the Lawson cycle map need not meet this definition: it may lie in the full indeterminacy, or it may fail to arise from a Massey defining system. The example of Sections~\ref{sec:ingredients}--\ref{sec:eightfold} addresses both issues.

\begin{proposition}[The elliptic--quintic divisor obstruction]\label{prop:quintic}
Let \(E_0\) be an elliptic curve, let \(V\subset\PP^4\) be a smooth quintic threefold, and put \(Z=E_0\times V\). Every defined triple morphic Massey product with all three classes in \(L^1H^2(Z;\Q)\) contains zero. Consequently these classes cannot produce a Lawson bracket invisible to singular homology in \(L_1H_3(Z;\Q)\), even in instances where that group's kernel of the Lawson cycle map is infinite-dimensional.
\end{proposition}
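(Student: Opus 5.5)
The plan is to classify all possible defining triples explicitly: show that, after discarding degenerate cases, every defined triple consists of rational multiples of one divisor class pulled back from \(\PP^1\), and then apply Proposition~\ref{prop:naturality}.

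\emph{Step 1: compute \(L^1H^2(Z;\Q)\).} By Lemma~\ref{lem:low-weight}, \(L^1H^2(Z;\Q)=\NS(Z)\otimes\Q\). The quintic \(V\) has \(H^1(V;\Z)=0\), so \(\Pic^0(V)=0\), and \(\NS(V)=\Z H\) by Lefschetz. There are no nonzero homomorphisms between \(\Pic^0(E_0)\) and \(\operatorname{Alb}(V)=0\), so the divisorial correspondence term vanishes and
\[
\NS(Z)\otimes\Q=\Q f\oplus\Q H,
\]
where \(f=p^*[\mathrm{pt}]\), \(H\) is the pulled-back hyperplane class, and \(p:Z\to E_0\) is the projection.

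\emph{Step 2: determine when adjacent products vanish.} Write \(a_i=s_if+t_iH\). Products of these classes lie in the span of the monomials \(f^iH^j\). Here \(f^2=0\), because it is pulled back from \(L^2H^4(E_0)=0\). The monomials \(H,f,H^2,fH\) have cycle classes that are linearly independent in \(H^*(Z^{\an};\Q)\), since they have distinct K\"unneth bidegrees and nonzero intersection numbers (for example \(fH^3=5\)). Hence a product in \(L^2H^4(Z;\Q)\) of two such classes vanishes iff its singular image vanishes. Expanding,
\[
a_1a_2=(s_1t_2+s_2t_1)\,fH+t_1t_2\,H^2.
\]
So \(a_1a_2=0\) forces either \(a_1=0\) or \(a_2=0\), or else \(t_1=t_2=0\); the same holds for \(a_2a_3\). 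If some \(a_i=0\), choosing the zero representative and zero null-homotopies in \eqref{eq:cochain-bracket} gives \(W=0\), so the bracket contains zero. Otherwise \(a_2\) is proportional to \(f\), and then \(t_1=t_3=0\), so all three classes are multiples \(a_i=s_if\).

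\emph{Step 3: reduce to \(\PP^1\).} Choose a finite morphism \(g:E_0\to\PP^1\) of degree \(d\), and put \(\pi=g\circ p\). Then \(\pi^*h=d\,f\) in \(\NS(Z)\), so rationally \(a_i=(s_i/d)\,\pi^*h\). Scaling a representative and its null-homotopies scales the glued cochain \(W\), which gives \(\br{\lambda_1b_1,\lambda_2b_2,\lambda_3b_3}\supseteq\lambda_1\lambda_2\lambda_3\br{b_1,b_2,b_3}\). Proposition~\ref{prop:naturality} then gives
\[
\br{a_1,a_2,a_3}_{\mor}\supseteq\tfrac{s_1s_2s_3}{d^3}\,\pi^*\br{h,h,h}_{\mor}.
\]
This bracket on \(\PP^1\) is defined because \(h^2=0\). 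It lies in \(L^3H^5(\PP^1;\Q)\). By the projective bundle formula this group equals \(L^3H^5(\mathrm{pt};\Q)\oplus L^2H^3(\mathrm{pt};\Q)\,h\), and both summands vanish because morphic cohomology of a point in positive weight reduces to motivic cohomology of \(\C\) in degree exceeding weight. Hence \(0\) lies in the original bracket. The Lawson consequence in \(L_1H_3(Z;\Q)\) then follows from the definition of \(\br{\cdot}_L\) through \(D_Z\), together with Definition~\ref{def:invisible}.

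\emph{Main obstacle.} I expect the delicate point to be Step 2, namely justifying that vanishing of products in \(L^2H^4(Z;\Q)\) can be tested by singular cohomology. Morphic \(L^2H^4\) can carry classes invisible to the cycle map, so I would not rely on a global injectivity statement. Instead I would argue directly in the subring generated by \(f\) and \(H\): products there are pulled back or exterior products of classes on the factors, namely \(H^2\) from \(V\) and \(f\boxtimes H\). Their images are independent, which suffices, since only coefficients of these two specific monomials enter.
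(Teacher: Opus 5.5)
Your proof is correct and is essentially the paper's argument: you test adjacent vanishing through the multiplicative cycle map to force every nonzero class to be a multiple of the pulled-back point class, and then pull back a bracket from a curve on which the target group vanishes. The paper pulls back along \(p:Z\to E_0\) directly, with \(L^3H^5(E_0;\Q)=H^5(E_0;\Q)=0\) by the stable-weight comparison, so your detour through \(g:E_0\to\PP^1\) and the rescaling inclusion is valid but unnecessary.

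One justification should be replaced. Morphic cohomology of a point does not reduce to motivic cohomology of \(\C\): already \(L^1H^1(\Spec\C)=0\), while \(H^1_M(\Spec\C,\Z(1))=\C^\times\). The vanishing you need nevertheless holds by the same stable-weight comparison, e.g.\ \(L^3H^5(\PP^1;\Q)\cong H^5(\PP^1;\Q)=0\).
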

\begin{proof}
Since \(\Pic(V)=\Z\) and \(H^1(V,\mathcal O_V)=0\),
\[
 L^1H^2(Z;\Q)=\NS(Z)_\Q=\Q e\oplus\Q h,
\]
where \(e\) comes from a point divisor on \(E_0\) and \(h\) from the hyperplane class on \(V\). In singular cohomology, \(e^2=0\), while \(eh,h^2\) are nonzero and linearly independent. If \(u=ae+bh\) and \(v=ce+dh\) are nonzero and \(uv=0\) morphically, then their singular cohomology product gives
\[
 ad+bc=0,\qquad bd=0.
\]
These equations force \(b=d=0\). Thus every pair of nonzero adjacent classes is made of multiples of \(e\).

If one class in a triple is zero, represent it by zero and take the null-homotopy of the product adjacent to that zero class to be zero where needed; the other required null-homotopy may be arbitrary. Formula~\eqref{eq:cochain-bracket} then supplies a value zero. Otherwise all three classes are multiples of \(e\) and are pulled back from \(E_0\). Restriction along a section verifies adjacent-product vanishing on \(E_0\). Their bracket there has target \(L^3H^5(E_0;\Q)=H^5(E_0;\Q)=0\), by the stable-weight comparison \cite[Theorems~5.6 and 5.8]{FLduality}. Pullback gives zero in the bracket on \(Z\). Finally, duality for the fourfold \(Z\) identifies its target \(L^3H^5(Z;\Q)\) with \(L_1H_3(Z;\Q)\). Examples with the large kernel mentioned in the statement occur in \cite{Hu}.
\end{proof}

\section{Geometric ingredients for the integral example}\label{sec:ingredients}
All groups in Sections~\ref{sec:ingredients}--\ref{sec:scope} have integral coefficients unless another coefficient group is explicitly displayed. Write
\[
 A^r(V):=\CH^r(V)/\CH^r(V)_{\alg}=L^rH^{2r}(V;\Z),
\]
where the last identification is the degree-\(2r\) morphic-cycle description modulo algebraic equivalence; see \cite{FLcocycles}. Thus \(A^r(V)\) is the group of codimension-\(r\) cycles modulo algebraic equivalence. A bar, as in \(\bar v\), denotes reduction modulo two in the indicated integral group.

\subsection{A homologically trivial class nonzero modulo two}
\begin{lemma}\label{lem:gamma}
For a very general principally polarized abelian threefold \(B\), there exists
\(\gamma\in\CH^2(B)_{\homol}\) whose image in \(A^2(B)/2\) is nonzero. In fact, the images of homologically trivial cycles span an infinite-dimensional subspace of \(A^2(B)/2\).
\end{lemma}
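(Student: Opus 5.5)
This lemma is an input taken from the literature rather than something proved from scratch, so the plan is to reduce it to the known result on Griffiths groups modulo $\ell$ of very general abelian threefolds. The key result is the theorem of Totaro (building on Schoen's work on Ceresa-type cycles) that for a very general principally polarized complex abelian threefold $B$, the group $\operatorname{Griff}^2(B)/\ell=\bigl(\CH^2(B)_{\homol}/\CH^2(B)_{\alg}\bigr)/\ell$ is infinite for every prime $\ell$, in particular for $\ell=2$. The Griffiths group is exactly $\CH^2(B)_{\homol}/\CH^2(B)_{\alg}$. Once that is imported, it remains to pass from $\operatorname{Griff}^2(B)/2$ to $A^2(B)/2$.

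There is a natural inclusion $\operatorname{Griff}^2(B)\hookrightarrow A^2(B)$, since $\CH^2(B)_{\alg}\subset\CH^2(B)_{\homol}$. I would then compare the two reductions modulo two. The induced map $\operatorname{Griff}^2(B)/2\to A^2(B)/2$ has kernel $(\operatorname{Griff}^2(B)\cap 2A^2(B))/2\operatorname{Griff}^2(B)$. To control it, I would use the exact sequence $0\to\operatorname{Griff}^2(B)\to A^2(B)\to \cl(A^2(B))\to0$, where the last group is the subgroup of algebraic classes in $H^4(B;\Z)$. Since $H^4(B;\Z)$ of an abelian variety is torsion-free, the quotient is torsion-free. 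Hence $\operatorname{Tor}_1(\cl(A^2(B)),\Z/2)=0$, and tensoring with $\Z/2$ shows that $\operatorname{Griff}^2(B)/2\to A^2(B)/2$ is injective.

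The image is exactly the span of the images of homologically trivial cycles. That span is therefore isomorphic to $\operatorname{Griff}^2(B)/2$, which is infinite-dimensional over $\F_2$. Any $\gamma$ whose class is nonzero there gives the first assertion.

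The main obstacle is locating the exact mod-$\ell$ statement with the right genericity hypothesis ("very general" rather than merely "general"). I would cite Totaro's theorem on the complex abelian threefold case in its published form, checking that it applies to $\ell=2$ and to principally polarized very general $B$. If only the $\otimes\Z/\ell$ form is stated, no further work is needed, because $\operatorname{Griff}/\ell=\operatorname{Griff}\otimes\Z/\ell$. The torsion-freeness step is routine.
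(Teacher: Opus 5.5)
Your argument is correct, but it differs from the paper's in what it imports from Totaro and in the homological step that follows.

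\textbf{The paper's route.} The paper imports only the weaker statement that $\CH^2(B)/2$ is infinite \cite[Theorem~3.1]{Totaro}. It then argues in two steps. First, $J=\im(\cl)\subset H^4(B;\Z)$ is finitely generated, so right exactness of $\otimes\Z/2$ shows that $\CH^2(B)_{\homol}$ has infinite-dimensional image in $\CH^2(B)/2$. Second, algebraically trivial cycles are divisible, so $\CH^2(B)_{\alg}\subset 2\CH^2(B)$ and $\CH^2(B)/2\cong A^2(B)/2$.

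\textbf{Your route.} You import the Griffiths-group form, that $\operatorname{Griff}^2(B)/2$ is infinite. You then use torsion-freeness of $H^4(B;\Z)$ to make $\operatorname{Griff}^2(B)/2\to A^2(B)/2$ injective.

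\textbf{Comparison.} Your step is cleaner and gives slightly more: the span is isomorphic to $\operatorname{Griff}^2(B)/2$, not just infinite-dimensional. It also avoids any divisibility argument. However, it relies on a special feature of abelian varieties and on a stronger form of the input. The paper's deduction works for any smooth projective variety with $\CH^2/2$ infinite, at the cost of the divisibility argument.

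\textbf{Caveat on the citation.} If the reference only gives $\CH^2(B)/2$ infinite, your remark that ``no further work is needed'' does not cover that case. Getting to $\operatorname{Griff}^2(B)/2$ then requires exactly the paper's two steps, in particular $\CH^2(B)_{\alg}\subset 2\CH^2(B)$. Without divisibility, the infinitude of $\CH^2(B)/2$ could a priori come entirely from algebraically trivial cycles.
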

\begin{proof}
Totaro proves that \(\CH^2(B)/2\) is infinite \cite[Theorem~3.1]{Totaro}. Let
\[
 G=\CH^2(B),\quad K=\ker(G\xrightarrow{\cl}H^4(B;\Z)),\quad
 J=\im(G\xrightarrow{\cl}H^4(B;\Z)).
\]
The group \(J\) is finitely generated, being a subgroup of the finitely generated group \(H^4(B;\Z)\). Right exactness of tensor product gives
\[
 K/2\longrightarrow G/2\longrightarrow J/2\longrightarrow0.
\]
The middle group is an infinite-dimensional \(\F_2\)-vector space and the last is finite-dimensional. Hence the image of \(K/2\) is infinite-dimensional.

Algebraically trivial cycles over \(\C\) form a divisible group. Indeed, differences in an algebraic family can be taken over a smooth projective curve, and multiplication by two is surjective on its Jacobian. Correspondences carry those divisions to divisions of the cycle. Thus \(\CH^2(B)_{\alg}\subset2\CH^2(B)\), and the quotient map induces an isomorphism
\(\CH^2(B)/2\cong A^2(B)/2\). This proves the assertion.
\end{proof}

\subsection{A nondivisible torsion cycle on a fivefold}
We use the following published result, in precisely its modulo-two form.
\begin{theorem}[Schreieder's exterior product theorem]\label{thm:schreieder}
Let \(V\) be a smooth complex projective variety and let \(T\) be an Enriques surface very general with respect to \(V\). Write \(K_T=\omega_T\) for its canonical line bundle; for an Enriques surface, \(K_T\) is the nontrivial line bundle of order two. Then
\[
 A^r(V)/2\longrightarrow A^{r+1}(T\times V)/2,
 \qquad \bar v\longmapsto\overline{c_1(K_T)\boxtimes v}
\]
is injective. The indicated integral product is killed by two.
\end{theorem}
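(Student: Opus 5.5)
Since the paper quotes this result, the plan is to recover its two assertions from the cited source while doing the elementary part directly. The second assertion is immediate. On an Enriques surface \(K_T^{\otimes2}\cong\mathcal O_T\), so \(2c_1(K_T)=0\) already in \(\CH^1(T)\). Bilinearity of the exterior product \(\CH^1(T)\otimes\CH^r(V)\to\CH^{r+1}(T\times V)\) then gives \(2\,(c_1(K_T)\boxtimes v)=(2c_1(K_T))\boxtimes v=0\). This relation survives in \(A^{r+1}(T\times V)\). The same bilinearity, together with the fact that exterior product preserves algebraic equivalence, shows that \(\bar v\mapsto\overline{c_1(K_T)\boxtimes v}\) is a well-defined homomorphism \(A^r(V)/2\to A^{r+1}(T\times V)/2\).

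For injectivity I would follow Schreieder's strategy rather than rebuild it, organized as follows.

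\emph{First}, spread out. Let \(\mathcal T\to M\) be a family of Enriques surfaces over a smooth base whose generic fibre is a geometric generic Enriques surface. Since \(T\) is very general with respect to \(V\), it suffices to prove the statement for this generic fibre over \(\overline{\C(M)}\). One must then check that nonvanishing modulo two specializes correctly to very general closed points. This is the standard Bloch--Srinivas/countability argument: the relevant relations, namely being divisible by two modulo algebraic equivalence, are defined by countably many algebraic conditions.

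\emph{Second}, degenerate the Enriques surface to a special fibre \(T_0\) on which \(K_{T_0}\) becomes a class one can detect, and use a specialization map on cycles modulo algebraic equivalence and modulo two. Suppose \(c_1(K_T)\boxtimes v\) were \(2w+(\text{alg.\ trivial})\) on the generic fibre. Specialization would then give the same relation on \(T_0\times V\). There one pairs with a suitable correspondence, or a refined unramified cohomology class of degree three attached to the \(\Z/2\) torsion of \(H^3(T;\Z)\) through the K3 double cover. Doing so should recover \(\bar v\in A^r(V)/2\), forcing \(\bar v=0\).

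The \emph{main obstacle} is this detection step. There is no naive correspondence \(T\times V\to V\) recovering \(v\), because \(c_1(K_T)\) is torsion and numerically trivial, so any projection-formula pairing with it vanishes integrally. The detection must therefore go through mod-two refined unramified cohomology of \(T\), or a degeneration in which the torsion class is linked with a transcendental class. The delicate point is controlling algebraic, and not merely rational, equivalence under this detection. That is exactly the content of Schreieder's exterior product theorem. I would treat it as the cited input, verifying only that its hypotheses (\(V\) smooth projective over \(\C\), \(T\) very general relative to \(V\)) match ours and that its conclusion is stated modulo two in \(A^*\).
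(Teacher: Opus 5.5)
Your proposal matches the paper, which gives no independent argument here and simply invokes \cite[Theorem~1.3]{Schreieder}, including its nondivisibility assertion, exactly as you do for injectivity. Your direct check is correct: \(2c_1(K_T)=0\) in \(\CH^1(T)\) makes the exterior product killed by two and gives a well-defined homomorphism on \(A^r(V)/2\); your outline of Schreieder's degeneration strategy is only heuristic context and carries none of the proof.
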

This is \cite[Theorem~1.3]{Schreieder}, including its nondivisibility assertion. Alexandrou's \cite[Theorem~6.1]{Alexandrou} gives a compatible Chow-modulo-\(n\) construction for arbitrary integers \(n\); that generalization is not needed for the present order-two example.

Choose \(B\) as in Lemma~\ref{lem:gamma}, choose \(T\) as in Theorem~\ref{thm:schreieder} relative to \(B\), and set
\begin{equation}\label{eq:Y-z}
 Y=T\times B,\qquad t_T:=c_1^{\mor}(K_T)\in L^1H^2(T),\qquad
 z=t_T\boxtimes\gamma\in L^3H^6(Y).
\end{equation}
Here \(\gamma\) also denotes its image in morphic cohomology. Then
\begin{equation}\label{eq:z-properties}
 2z=0,\qquad \cl(z)=0,\qquad
 \bar z\neq0\quad\text{in }L^3H^6(Y)/2.
\end{equation}
The first two statements follow from \(K_T^{\otimes2}\cong\mathcal O_T\) and \(\cl(\gamma)=0\). The third follows from Theorem~\ref{thm:schreieder}. Nonzero torsion alone would not be sufficient: the nonzero reduction modulo two will be used in the detection theorem.

\subsection{An elliptic exterior-product detector}
The second exterior-product ingredient is the following special case of a theorem of Alexandrou--Zhou.
\begin{theorem}[Elliptic detector]\label{thm:AZ}
For every smooth complex projective variety \(Y\), there is an elliptic curve \(E\) such that, for every nonzero
\(\bar u\in L^1H^1(E)/2\) and all integers \(c,p\geq0\), the map
\begin{equation}\label{eq:AZ}
 \F_2\bar u\otimes_{\F_2}\bigl(L^cH^{2c-p}(Y)/2\bigr)
 \longrightarrow L^{c+1}H^{2c+1-p}(E\times Y)/2
\end{equation}
induced by exterior product is injective.
\end{theorem}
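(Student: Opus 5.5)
Theorem~\ref{thm:AZ} is a specialization of a published theorem, so I would not reprove it from scratch. The plan is to extract it from Alexandrou--Zhou \cite{AZ}, after translating their statement into the morphic notation used here.

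\textbf{Step 1: translation.} Their result asserts that, for \(E\) very general relative to \(Y\), exterior product with a nonzero class of \(H^1(E)\) modulo two is injective on \(Y\)'s cycle-theoretic groups modulo two, in the relevant weight and degree range. I would:
\begin{itemize}
\item identify \(L^1H^1(E)=H^1(E;\Z)\cong\Z^2\), using Lemma~\ref{lem:low-weight}, so that \(L^1H^1(E)/2\cong\F_2^2\) and \(\bar u\) ranges over its three nonzero elements;
\item check the bidegree bookkeeping: exterior product sends weight \(c\), degree \(2c-p\) on \(Y\) to weight \(c+1\), degree \(2c+1-p\) on \(E\times Y\), exactly as in \eqref{eq:AZ};
\item check that reduction modulo two in \eqref{eq:AZ} means \(G/2G\) of the integral groups, as fixed in Section~\ref{sec:models}, and that this matches the quotient used in \cite{AZ} rather than finite-coefficient groups.
\end{itemize}
Since \(\F_2\bar u\) is one-dimensional, the map \eqref{eq:AZ} is just \(\bar y\mapsto\overline{u\boxtimes y}\). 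So injectivity is exactly the statement \(u\boxtimes y\in 2L^{c+1}H^{2c+1-p}(E\times Y)\Rightarrow y\in 2L^cH^{2c-p}(Y)\).

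\textbf{Step 2: uniformity of \(E\).} The statement quantifies over all \(c,p\geq0\) and all three nonzero \(\bar u\) simultaneously. \cite{AZ} presumably produces, for each fixed datum, a countable union of proper closed conditions on the moduli of elliptic curves that must be avoided. There are only countably many \((c,p,\bar u)\), and a countable union of countable unions of proper closed subsets is still meagre in the \(j\)-line over \(\C\). So a very general \(E\) works for all of them at once. I would also confirm that \(\boxtimes\) in \cite{AZ} is the same exterior product as Heller's multiplication \cite[Proposition~5.6]{Heller} pulled back along the two projections.

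\textbf{Main obstacle.} The hard part is substantive, not bookkeeping: there is no cheap retraction. The naive attempt would pick \(u'\) with \(u\cdot u'\) generating \(H^2(E)\), multiply \(u\boxtimes y\) by \(u'\boxtimes1\), and push forward to \(Y\) via \eqref{eq:projective-push}-type formulas. This fails for two reasons:
\begin{itemize}
\item \(u\cdot u'\) lands in weight two, \(L^2H^2(E)\), not in the weight-one point class;
\item the composite therefore recovers \(y\) only after applying the \(s\)-operation, which can kill classes modulo two.
\end{itemize}
This is why \(E\) must depend on \(Y\), and why the genuine input is the Alexandrou--Zhou degeneration or specialization argument. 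If the cited theorem turns out to be phrased for higher Chow groups or for Lawson homology rather than morphic cohomology, I would pass through Friedlander--Lawson duality \eqref{eq:duality} on \(E\times Y\) and on \(Y\). Exterior products are compatible with duality, so injectivity transfers verbatim.
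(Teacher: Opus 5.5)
Your proposal takes essentially the same route as the paper, which also does not reprove the result but simply invokes \cite[Corollary~6.3]{AZ} at $n=2$; the paper notes that at $n=2$ every nonzero class is automatically primitive, which sidesteps a composite-$n$ subtlety in their Lemma~6.1. The uniformity you obtain by a countable-union argument is already built into the cited construction: one takes $E$ with $j$-invariant transcendental over a countable algebraically closed field of definition of $Y$, and the conclusion then holds for every nonzero $\bar u$ and all $c,p$ simultaneously.
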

We invoke \cite[Corollary~6.3]{AZ}, specialized to the prime two.ootnote{Only the \(n=2\) specialization is used here. In the proof chain of \cite{AZ}, the Poincar\'e-duality argument in Lemma~6.1 requires a primitive (equivalently, free rank-one) class for composite \(n\); Corollary~6.3 is formulated with a free rank-one \(\Z/n\)-submodule, and for \(n=2\) every nonzero class is automatically primitive. Thus this issue does not affect the specialization used in the present paper.} In its construction, one can take a countable algebraically closed field of definition for \(Y\) and an elliptic curve with transcendental \(j\)-invariant over that field. The theorem applies to \emph{every} nonzero line in \(L^1H^1(E)/2\), so the line that will arise from the double-cover construction below need not be chosen in advance.

Take \(E\) for the already chosen fivefold \(Y\). Equations~\eqref{eq:z-properties} and \eqref{eq:AZ}, with \(c=3,p=0\), show that
\begin{equation}\label{eq:uz-nonzero}
 \bar u\neq0\quad\Longrightarrow\quad
 \overline{u\boxtimes z}\neq0
 \text{ in }L^4H^7(E\times Y)/2.
\end{equation}
This is the only elliptic exterior-product assertion needed in the proof.

\subsection{A surface extending the prescribed double cover}
The surface providing indeterminacy control is not the Enriques factor \(T\). It is constructed after \(E\) is fixed.

\begin{lemma}[Relative equivariant complete intersection]\label{lem:surface}
Let \(E\) be an elliptic curve, and let \(\widetilde E\to E\) be a connected \emph{\'etale} double cover. There exist a smooth connected projective surface \(S\), a closed embedding \(f:E\hookrightarrow S\), and a connected \'etale double cover \(\widetilde S\to S\) whose pullback to \(E\) is the prescribed cover, such that
\begin{equation}\label{eq:H1S}
 H^1(S;\Z)=0.
\end{equation}
Let \(q:\widetilde S\to S\) denote the double cover. Its associated order-two line bundle \(\lambda_S\in\Pic(S)[2]\) is characterized by the eigensheaf decomposition
\[
 q_*\mathcal O_{\widetilde S}\cong\mathcal O_S\oplus\lambda_S^{-1},\qquad \lambda_S^{\otimes2}\cong\mathcal O_S.
\]
Set \(t_S:=c_1^{\mor}(\lambda_S)\in L^1H^2(S)\). Then
\begin{equation}\label{eq:tS-properties}
 t_S\neq0,\qquad 2t_S=0,\qquad f^*t_S=0\text{ in }L^1H^2(E).
\end{equation}
Let
\[
 \beta:L^1H^1(S;\F_2)\longrightarrow L^1H^2(S;\Z)[2]
\]
be the Bockstein connecting homomorphism in \eqref{eq:coefficients}, induced by the coefficient cofiber sequence \(H\Z\xrightarrow{2}H\Z\to H\F_2\) in the stable category of \(H\Z\)-modules \cite{Adams,SchwedeShipley}. By the \emph{Bockstein preimage} of \(t_S\) we mean a class \(\alpha_S\) with \(\beta(\alpha_S)=t_S\). In the present situation it is unique, and it satisfies
\begin{equation}\label{eq:cover-restriction}
 f^*\alpha_S\neq0\quad\text{in }L^1H^1(E;\F_2)=H^1(E;\F_2).
\end{equation}
\end{lemma}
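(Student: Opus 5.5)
The plan is to build \(S\) as a quotient of a complete intersection with a free involution, then read off the cohomological properties through the low-weight comparison of Lemma~\ref{lem:low-weight}.

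\emph{Construction.} The connected étale double cover \(\widetilde E\to E\) is a quotient by a translation \(\sigma\) of order two on the elliptic curve \(\widetilde E\). I would choose a linear representation \(W\) of \(\Z/2\) and a \(\sigma\)-equivariant embedding \(\widetilde E\hookrightarrow\PP(W)\), for instance using \(\sigma\)-invariant line bundles of large degree. After adding extra summands I would arrange that the fixed locus of the induced involution on \(\PP(W)\), namely the union of the two projectivized eigenspaces, has codimension large enough. Next I would cut \(\widetilde E\) out, inside a surface, by \(\dim\PP(W)-2\) general hypersurfaces that are \(\sigma\)-semi-invariant of high degree and contain \(\widetilde E\). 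A Bertini argument in the equivariant, relative form (as in Serre's construction of quotients of complete intersections by free actions) should then give:
\begin{itemize}
\item the complete intersection \(\widetilde S\) is a smooth connected surface containing \(\widetilde E\);
\item \(\widetilde S\) misses the fixed locus, so the involution acts freely on it.
\end{itemize}
Smoothness along \(\widetilde E\) uses that the ideal of \(\widetilde E\) twisted by a high degree is globally generated by semi-invariant sections. Connectedness and \(H^1(\widetilde S;\Z)=0\) follow from Lefschetz, because \(\widetilde S\) is a complete intersection of ample divisors in a projective space.

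Set \(S=\widetilde S/\sigma\) and \(f:E=\widetilde E/\sigma\hookrightarrow S\). The cover \(\widetilde S\to S\) pulls back to \(\widetilde E\to E\) by construction. Since \(\pi_1(\widetilde S)=1\), we get \(\pi_1(S)=\Z/2\). Hence \(H^1(S;\Z)=\operatorname{Hom}(\Z/2,\Z)=0\), which is \eqref{eq:H1S}, and \(H^2(S;\Z)_{\mathrm{tors}}\cong\Z/2\). The cover corresponds to the nonzero \(\alpha\in H^1(S;\F_2)=\operatorname{Hom}(\pi_1S,\F_2)\). Because \(\pi_1(E)\to\pi_1(S)=\Z/2\) is surjective (the restricted cover is connected), we get \(f^*\alpha\neq0\).

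\emph{Morphic translation.} By Lemma~\ref{lem:low-weight}, \(L^1H^1(S;\F_2)=H^1(S;\F_2)\) and \(L^1H^2(S;\Z)=\NS(S)\), and the torsion of \(\NS(S)\) agrees with the torsion of \(H^2(S;\Z)\). I would compare the morphic Bockstein sequence \eqref{eq:coefficients} with the singular one through \(\cl\). Since \(L^1H^1(S;\Z)=H^1(S;\Z)=0\), both sequences show that \(\beta\) is injective, so the Bockstein preimage \(\alpha_S\) is unique once it exists. It remains to identify \(\beta(\alpha)\) with \(c_1(\lambda_S)\). This is the standard fact that the first Chern class of the \(2\)-torsion line bundle attached to a double cover is the Bockstein of the class of the cover (Kummer theory). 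It gives \(\beta(\alpha)=t_S\), so \(\alpha_S=\alpha\) and \eqref{eq:cover-restriction} holds. The properties \(t_S\neq0\) and \(2t_S=0\) follow because \(\lambda_S\) is a nontrivial \(2\)-torsion bundle; it is nontrivial since \(\widetilde S\) is connected.

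\emph{Vanishing on \(E\).} Finally, \(f^*t_S\) is the class of \(\lambda_E\) in \(L^1H^2(E)=\NS(E)\cong\Z\). That group is torsion-free, so \(f^*t_S=0\) even though \(\lambda_E\) is a nontrivial element of \(\Pic^0(E)[2]\).

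I expect the main obstacle to be the equivariant Bertini step: producing semi-invariant hypersurfaces through \(\widetilde E\) whose intersection is smooth along \(\widetilde E\) and avoids the fixed locus. I would first try it with degrees large enough that \(\mathcal I_{\widetilde E}(d)\) is globally generated by eigen-sections, combined with a dimension count on the fixed locus.
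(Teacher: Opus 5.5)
Your proposal is correct and is essentially the paper's argument. The shared steps are: an equivariant complete intersection surface through \(\widetilde E\) that avoids the fixed locus of the involution; its free quotient \(S\); and the identification \(t_S=\beta(\alpha_S)\), with \(\alpha_S\) the cover class, via Lemma~\ref{lem:low-weight} and exponential/Kummer compatibility. The paper makes your Bertini step concrete with the degree-eight embedding \(\widetilde E\subset\PP^7\), whose fixed locus is \(\PP^3\sqcup\PP^3\), and five general \(\iota\)-invariant sections of \(\mathcal I_{\widetilde E}(d)\) for large even \(d\). Anti-invariant sections of even degree would vanish on the whole fixed locus, so they are not used.

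You obtain \(H^1(S;\Z)=0\) differently: from simple connectivity of \(\widetilde S\), giving \(\pi_1(S)=\Z/2\). The paper instead uses Koszul vanishing of \(H^1(\widetilde S,\mathcal O_{\widetilde S})\), the trace splitting and Hodge theory. Finally, \(t_S\neq0\) in \(\NS(S)\) needs \(\Pic^0(S)=0\), or your injectivity of \(\beta\) together with \(\alpha_S\neq0\). Nontriviality of \(\lambda_S\) in \(\Pic(S)\) alone would not suffice, exactly as your own remark about \(\lambda_E\) shows.
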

\begin{proof}
Let \(\iota\) denote the deck involution. Choose a line bundle \(M\) of degree four on \(E\). Its pullback embeds \(\widetilde E\) in \(\PP^7\), since it has degree eight on an elliptic curve. If \(\eta\in\Pic(E)[2]\setminus\{0\}\) defines the cover, then
\[
 H^0(\widetilde E,\pi_E^*M)
 =H^0(E,M)\oplus H^0(E,M\otimes\eta).
\]
Both summands have dimension four and are the two eigenspaces of \(\iota\). The fixed locus of the resulting involution on \(\PP^7\) is therefore
\(F=\PP^3\sqcup\PP^3\). The embedded \(\widetilde E\) is disjoint from \(F\), since the embedding is equivariant and \(\iota\) has no fixed point on \(\widetilde E\).

For sufficiently large even \(d\), consider the invariant vector space
\[
 V_d=H^0(\PP^7,\mathcal I_{\widetilde E}(d))^{\iota}.
\]
Its sections are base-point-free away from \(\widetilde E\), and along the curve they generate the descended conormal bundle after twisting. Here is a justification that includes the fixed locus. The even power \(\mathcal O_{\PP^7}(d)\) descends to an ample line bundle on the finite quotient \(\PP^7/\langle\iota\rangle\); for large \(d\), Serre vanishing gives generation of the ideal of the image of \(\widetilde E\), and of its conormal bundle in the smooth free locus. Pulling back gives the claims. Equivalently, one can separate jets along the two-point free orbits and average; at fixed points the action on the even-degree fibre is trivial.

Take five general sections in \(V_d\). Off \(\widetilde E\cup F\), their common zero locus is smooth of codimension five by Bertini on the free quotient. Along \(\widetilde E\), its conormal bundle in \(\PP^7\) has rank six. At a point of the curve, the locus where five conormal values have rank at most four has codimension
\[
 (6-4)(5-4)=2
\]
in the space of five-tuples. Since the curve has dimension one, the incidence variety of such rank failures has positive codimension in the parameter space. Thus a general five-tuple has rank five at every point of \(\widetilde E\). Finally, at each point of either fixed \(\PP^3\), simultaneous vanishing of five sections has codimension five. The corresponding incidence over a threefold cannot dominate the parameter space. The five-tuple may therefore also be chosen with no common zero on \(F\).

Their complete intersection \(\widetilde S\subset\PP^7\) is a smooth surface, contains \(\widetilde E\), and has a free involution. It is connected, and its Koszul resolution gives
\(H^1(\widetilde S,\mathcal O_{\widetilde S})=0\). Set
\(S=\widetilde S/\langle\iota\rangle\). The quotient is smooth and projective and contains \(E\) as a closed subvariety. Its pulled-back cover is \(\widetilde E\to E\). The trace splitting gives \(H^1(S,\mathcal O_S)=0\), and Hodge theory gives \(H^1(S;\Z)=0\).

The line bundle \(\lambda_S\) defined by the anti-invariant eigensheaf above is nontrivial of order two: if \(\lambda_S\cong\mathcal O_S\), the corresponding degree-two \'etale algebra would split and the cover would be disconnected. Since \(\Pic^0(S)=0\), its class \(t_S\) is nonzero in \(\NS(S)=L^1H^2(S)\). On the curve \(E\), its restriction has degree zero, and hence its morphic divisor class is zero. This proves \eqref{eq:tS-properties}.

By \eqref{eq:coefficients} and \eqref{eq:H1S}, the connecting map \(\beta\) is an isomorphism onto the two-torsion subgroup containing \(t_S\), so \(t_S\) has a unique Bockstein preimage \(\alpha_S\). Under Lemma~\ref{lem:low-weight}, \(\alpha_S\) is the class in \(H^1(S;\F_2)\) classifying the connected \'etale double cover; compatibility of the coefficient Bockstein with the exponential sequence sends this class to \(c_1(\lambda_S)\). Its restriction is the class of the prescribed connected cover of \(E\), and is nonzero. This proves \eqref{eq:cover-restriction}.
\end{proof}

\begin{remark}[Order of choices]\label{rem:choices}
The sequence is
\[
 B\ \longrightarrow\ T\ \longrightarrow\ Y=T\times B
 \ \longrightarrow\ E\ \longrightarrow\ S.
\]
The elliptic curve is selected relative to \(Y\), not relative to \(S\times Y\). Constructing \(S\) afterward does not alter the hypotheses of Theorem~\ref{thm:AZ}. Also, \(f^*\lambda_S\) is a nontrivial torsion line bundle on \(E\), whereas \(f^*t_S=0\) in \(\NS(E)\). Confusing those two statements would invalidate the Bockstein argument.
\end{remark}

\section{The torsion-linking bracket and its full indeterminacy}\label{sec:detector}
Put \(X_0=S\times Y=S\times T\times B\), a sevenfold, and suppress pullback notation for its factors. Since \(2t_S=2z=0\), the bracket
\begin{equation}\label{eq:sevenfold-bracket}
 \br{t_S,2,z}_{\mor}\subset L^4H^7(X_0)
\end{equation}
is defined. Its middle class is the coefficient class \(2\in L^0H^0(X_0)\).

\subsection{A defining system with a singular-cohomology-zero value}
The following cochain calculation is performed first in a derived tensor-product model for the factors and then mapped to the multiplicative model of the product by the exterior-product map. No K\"unneth quasi-isomorphism is assumed. Choose closed representatives \(A\in\cC_S^2(1)\), \(B_0\in\cC_T^2(1)\), and \(G\in\cC_B^4(2)\) for \(t_S,t_T,\gamma\). Choose cochains \(U\in\cC_S^1(1)\), \(V\in\cC_T^1(1)\) satisfying
\begin{equation}\label{eq:torsion-nullhomotopies}
 dU=2A,\qquad dV=2B_0.
\end{equation}
Set
\begin{equation}\label{eq:defining-system}
 Z=B_0\boxtimes G,\qquad V_Y=V\boxtimes G,
 \qquad W=A\boxtimes V_Y-U\boxtimes Z.
\end{equation}
Then \(dV_Y=2Z\), so \(W\) represents a value \(w\) of \eqref{eq:sevenfold-bracket}.

\begin{lemma}\label{lem:w-properties}
The chosen value satisfies
\[
 2w=0,\qquad \cl(w)=0.
\]
\end{lemma}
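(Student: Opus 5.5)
The plan is to prove both assertions by explicit cochain manipulations in the derived tensor-product model of the factors. The manipulations are then transported to the model of \(X_0\) by the multiplicative exterior-product map, which is legitimate by Lemma~\ref{lem:linear-model} and the conventions following it. Throughout I use only associativity of the exterior product at cochain level and the Leibniz rule with the Koszul sign. No K\"unneth quasi-isomorphism and no commutativity are needed.

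\emph{Step 1: \(2w=0\).} I will exhibit an explicit primitive of \(2W\), namely \(U\boxtimes V_Y\). Since \(U\) has degree one, the Leibniz rule and \eqref{eq:torsion-nullhomotopies} give
\[
 d(U\boxtimes V_Y)=dU\boxtimes V_Y-U\boxtimes dV_Y
 =2A\boxtimes V_Y-2\,U\boxtimes Z=2W .
\]
Here I use \(dV_Y=dV\boxtimes G=2B_0\boxtimes G=2Z\), which holds because \(G\) is closed. Hence \(2W\) is a coboundary, and \(2w=0\) in \(L^4H^7(X_0)\).

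\emph{Step 2: factor the value through \(\gamma\).} By associativity of the exterior product,
\[
 W=(A\boxtimes V-U\boxtimes B_0)\boxtimes G=:W'\boxtimes G .
\]
The cochain \(W'\) on \(S\times T\) is closed:
\[
 d(A\boxtimes V)=A\boxtimes 2B_0,
\]
with a plus sign because \(|A|=2\), and
\[
 d(U\boxtimes B_0)=2A\boxtimes B_0 .
\]
Thus \(W'\) represents a value \(w'\) of \(\br{t_S,2,t_T}_{\mor}\subset L^2H^3(S\times T)\), and \(w=w'\boxtimes\gamma\) in morphic cohomology of \(X_0\).

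\emph{Step 3: \(\cl(w)=0\).} The realization map \(\cl\) is multiplicative, so it is compatible with exterior products. Therefore
\[
 \cl(w)=\cl(w')\boxtimes\cl(\gamma)=0,
\]
since \(\gamma\in\CH^2(B)_{\homol}\) gives \(\cl(\gamma)=0\) by Lemma~\ref{lem:gamma}.

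The step requiring the most care is Step~2. It needs associativity and multiplicativity of the exterior-product map from the factorwise derived tensor model into \(\cC_{X_0}\), together with the consistency of the Koszul signs with the convention \eqref{eq:cochain-bracket}. With \(m_1=2\), that convention has sign \((-1)^{m_1+1}=-1\), matching the minus sign in \eqref{eq:defining-system}. If strict associativity of the three-fold exterior product fails in the chosen model, I would instead argue directly after realization. There \(\cl(G)=dH\) for a singular cochain \(H\), and Step~1's style of primitive, \(\pm A\boxtimes V\boxtimes H\mp U\boxtimes B_0\boxtimes H\), kills the realized \(W\) up to the same sign check.
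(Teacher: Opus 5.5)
Your proposal is correct and follows the paper's proof essentially step for step. The three steps are the primitive $U\boxtimes V_Y$ for $2W$, the factorization $W=W_0\boxtimes G$ with $W_0=A\boxtimes V-U\boxtimes B_0$ closed, and then multiplicativity of realization together with $\cl(\gamma)=0$; your added remarks (that $W_0$ represents a value of $\br{t_S,2,t_T}_{\mor}$, and the fallback via a singular primitive of the realization of $G$) are harmless extras the paper does not need.
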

\begin{proof}
The signs in \eqref{eq:defining-system} use \(|A|=2\), \(|U|=1\), and \(|Z|=6\). The derivation rule gives
\begin{equation}\label{eq:order-two}
 d(U\boxtimes V_Y)=2A\boxtimes V_Y-2U\boxtimes Z=2W.
\end{equation}
For the cycle class, put
\[
 W_0=A\boxtimes V-U\boxtimes B_0.
\]
It is closed, and \(W=W_0\boxtimes G\). Multiplicativity of realization and \(\cl(\gamma)=0\) imply
\begin{equation}\label{eq:factor-zero}
 \cl(w)=\cl([W_0])\boxtimes\cl(\gamma)=0.
\end{equation}
\end{proof}

\subsection{An additive restriction map annihilating all indeterminacy}
The full indeterminacy of \eqref{eq:sevenfold-bracket} is
\begin{equation}\label{eq:I0}
 I_0=t_S L^3H^5(X_0)+L^1H^1(X_0)z.
\end{equation}
Choose \(e_0\in E\), write \(s_0=f(e_0)\), and define
\[
 j=f\times\id_Y:E\times Y\longrightarrow X_0,\qquad
 s:Y\longrightarrow X_0,\quad y\longmapsto(s_0,y),
\]
with \(p:E\times Y\to Y\) the projection. The relevant map is the \emph{additive} homomorphism
\begin{equation}\label{eq:R}
 \mathcal R=j^*-p^*s^*:L^4H^7(X_0)\longrightarrow L^4H^7(E\times Y).
\end{equation}
It is not being asserted to be a ring homomorphism.

\begin{lemma}\label{lem:R-indeterminacy}
One has \(\mathcal R(I_0)=0\).
\end{lemma}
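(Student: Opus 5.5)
The plan is to use only that \(j^*\) and \(s^*\) are ring homomorphisms, each separately, and treat the two summands of \(I_0\) in \eqref{eq:I0} independently. The map \(\mathcal R\) is additive, so it is enough to show that \(\mathcal R\) kills \(t_S\,y\) for every \(y\in L^3H^5(X_0)\), and kills \(y\,z\) for every \(y\in L^1H^1(X_0)\).

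\emph{First summand.} Write \(\mathrm{pr}_S:X_0\to S\) for the projection. Pullback is multiplicative, so
\[
 j^*(t_S y)=j^*t_S\cdot j^*y .
\]
Here \(j^*t_S\) is the pullback to \(E\times Y\) of \(f^*t_S\in L^1H^2(E)\), because \(\mathrm{pr}_S\circ j=f\circ\mathrm{pr}_E\). By \eqref{eq:tS-properties}, \(f^*t_S=0\), so this term vanishes. Similarly
\[
 s^*(t_S y)=s^*t_S\cdot s^*y .
\]
The class \(s^*t_S\) is pulled back from the restriction of \(t_S\) to the point \(s_0\). That restriction lies in \(L^1H^2(\Spec\C)=\NS(\Spec\C)=0\), by Lemma~\ref{lem:low-weight}. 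So both terms of \(\mathcal R(t_S y)\) vanish.

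\emph{Second summand.} Let \(\mathrm{pr}:X_0\to Y\) be the projection. Then \(z=\mathrm{pr}^*z\), and we have
\[
 \mathrm{pr}\circ j=p,\qquad \mathrm{pr}\circ s=\id_Y .
\]
The key claim is that every \(y\in L^1H^1(X_0)\) is of the form \(\mathrm{pr}^*y'\) with \(y'\in L^1H^1(Y)\). Granting this, \(yz=\mathrm{pr}^*(y'z)\). It follows that
\[
 j^*(yz)=p^*(y'z)=p^*s^*\mathrm{pr}^*(y'z)=p^*s^*(yz),
\]
so \(\mathcal R(yz)=0\).

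To prove the claim, use the natural identification \(L^1H^1(-;\Z)=H^1(-;\Z)\) of Lemma~\ref{lem:low-weight}. It is compatible with pullback, since it is induced by the realization map \(\cl\). Because \(H^0\) of each factor is \(\Z\), the integral Künneth formula in degree one gives
\[
 H^1(S\times T\times B;\Z)=H^1(S;\Z)\oplus H^1(T;\Z)\oplus H^1(B;\Z),
\]
with each summand pulled back from the corresponding factor. Here \(H^1(S;\Z)=0\) by \eqref{eq:H1S}, and \(H^1(T;\Z)=0\) because an Enriques surface has finite fundamental group. Hence \(H^1(X_0;\Z)=\mathrm{pr}^*H^1(Y;\Z)\), which transfers to the morphic groups.

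The main point needing care is this last identification. The isomorphism \(L^1H^1\cong H^1\) must be natural, so that the statement about singular \(H^1\) lifts to a statement about morphic classes. Apart from that, the argument uses only multiplicativity of pullbacks, not any ring property of \(\mathcal R\) itself.
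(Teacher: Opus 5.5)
Your proof is correct and is essentially the paper's argument: both terms of \(\mathcal R\) kill \(t_S L^3H^5(X_0)\) because \(t_S\) restricts to zero on \(E\) and on a point, and they agree on \(L^1H^1(X_0)z\) because \(H^1(S;\Z)=0\) makes every class of \(L^1H^1(X_0)\cong H^1(X_0;\Z)\) a pullback from \(Y\). Your extra use of \(H^1(T;\Z)=0\) is harmless but unnecessary, since only pullback from \(Y\), not from \(B\), is needed.
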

\begin{proof}
Both \(j^*t_S\) and \(s^*t_S\) vanish: the first by Lemma~\ref{lem:surface}, the second by restriction to a point. Thus \(\mathcal R\) kills \(t_SL^3H^5(X_0)\), without any hypothesis on the second factor in this product.

For the other summand, Lemma~\ref{lem:low-weight} and \(H^1(S;\Z)=0\) give
\[
 L^1H^1(S\times Y)=H^1(S\times Y;\Z)
 =\operatorname{pr}_Y^*H^1(Y;\Z).
\]
Hence every class in \(L^1H^1(X_0)z\) is pulled back from \(Y\). The two terms of \(\mathcal R\) agree on all such classes. This proves the assertion for the full subgroup \eqref{eq:I0}.
\end{proof}

\subsection{Evaluation on the chosen defining system}
\begin{lemma}\label{lem:R-value}
There is \(u\in L^1H^1(E)\) with \(\bar u\neq0\) modulo two such that
\begin{equation}\label{eq:Rw}
 \mathcal R(w)=-u\boxtimes z.
\end{equation}
\end{lemma}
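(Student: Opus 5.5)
Compute \(\mathcal R(w)\) directly from the cochain representative \(W=A\boxtimes V_Y-U\boxtimes Z\), term by term, using only naturality of the exterior-product model under the pullbacks \(j=f\times\id_Y\) and \(s\times\id\). Both \(j^*\) and \(p^*s^*\) act on an exterior product \(C\boxtimes D\) (with \(C\) on \(S\), \(D\) on \(Y\)) as \(f^*C\boxtimes D\) and \(e_0^*C\boxtimes D\) (pulled back along \(p\)), respectively. Hence
\[
 \mathcal R(W)=(f^*A-\epsilon A)\boxtimes V_Y-(f^*U-\epsilon U)\boxtimes Z,
\]
where \(\epsilon\) denotes restriction to \(s_0\) followed by pullback to \(E\). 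The cochain \(\epsilon A\) lives in weight one, degree two over a point. Its class vanishes, and at the level of the chosen minimal model we may even choose \(A\) and \(U\) so that \(\epsilon A=0\) and \(\epsilon U=0\), since the point has no weight-one cohomology. Alternatively, we absorb these terms into coboundaries.

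The key step is the restriction to \(E\). By \eqref{eq:tS-properties}, \(f^*t_S=0\) in \(L^1H^2(E)\), so \(f^*A=dN\) for some cochain \(N\in\cC_E^1(1)\). Then \(f^*U-2N\) is closed, since \(d(f^*U)=2f^*A=2dN\). Call its class \(u\in L^1H^1(E)\). Substituting, the representative becomes
\[
 dN\boxtimes V_Y-(u\text{-rep}+2N)\boxtimes Z.
\]
Modulo the coboundary \(d(N\boxtimes V_Y)=dN\boxtimes V_Y-N\boxtimes 2Z\), the terms involving \(N\) cancel. What remains is \(-\tilde u\boxtimes Z\), which represents \(-u\boxtimes z\). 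Sign bookkeeping uses \(|N|=1\), matching the convention \eqref{eq:cochain-bracket}.

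It remains to show \(\bar u\neq0\). The class \(u\) is exactly the integral lift produced by the Bockstein construction. Reducing mod two, the mod-two reduction of \(f^*U-2N\) equals \(f^*\) of the mod-two reduction of \(U\). That reduction of \(U\) is closed mod two and represents a class whose Bockstein is \(t_S\); by uniqueness in Lemma~\ref{lem:surface}, this class is \(\alpha_S\). Hence \(\bar u\), viewed in \(L^1H^1(E;\F_2)\), equals \(f^*\alpha_S\), which is nonzero by \eqref{eq:cover-restriction}. We must still check that the image of \(\bar u\) in \(L^1H^1(E)/2\) is nonzero. Since \(L^1H^1(E)/2\hookrightarrow L^1H^1(E;\F_2)\) by \eqref{eq:coefficients}, it suffices that \(\bar u\) maps to \(f^*\alpha_S\ne0\).

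The main obstacle is this identification of the reduction of \(f^*U-2N\) with the pullback of the Bockstein preimage, carried out in the spectrum-level coefficient convention rather than naively. I would handle it by choosing a cofibrant degreewise-flat model, as permitted in Section~\ref{sec:models}, so that the mod-two reduction and the connecting map \(\beta\) are computed by the standard snake-lemma recipe, with \(U/2\) as the witness.
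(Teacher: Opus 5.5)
Your proposal is correct and is essentially the paper's proof: restrict \(W\) along \(j\), choose \(N\) with \(dN=f^*A\), note that \(K=f^*U-2N\) is closed, and cancel the \(N\)-terms against \(d(N\boxtimes V_Y)\) to get \(j^*w=-[K]\boxtimes z\). You then identify \(\bar u\) with \(f^*\alpha_S\) through the Bockstein, and injectivity of \(L^1H^1(E)/2\to L^1H^1(E;\F_2)\) suffices, as you say.

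The only difference is the \(p^*s^*\) term. The paper handles it without any cochain normalization by factoring \(s=j\circ(e_0\times\id_Y)\), so that \(s^*w=-(e_0^*u)\boxtimes z=0\) because \(L^1H^1(\Spec\C)=0\). Your remark that ``the point has no weight-one cohomology'' overstates this, since \(L^1H^0(\Spec\C)\cong\Z\). It is harmless, because only the degree-one and degree-two groups enter and those do vanish.
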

\begin{proof}
Since \(f^*t_S=0\), choose a cochain \(D\in\cC_E^1(1)\) with \(dD=f^*A\), and put
\[
 K=f^*U-2D.
\]
Then \(dK=0\). Write \(u=[K]\in L^1H^1(E)\). Reduction of \(U\) modulo two represents the Bockstein preimage \(\alpha_S\), so
\begin{equation}\label{eq:u-reduction}
 \bar u=f^*\alpha_S\neq0
 \quad\text{in }L^1H^1(E;\F_2).
\end{equation}
Here \(L^1H^1(E)/2\to L^1H^1(E;\F_2)\) is an isomorphism because \(L^1H^2(E)=\NS(E)=\Z\) has no two-torsion. Thus \(u\) is nonzero modulo two in the quotient group required by Theorem~\ref{thm:AZ}.

Now restrict \eqref{eq:defining-system}. Using \(|D|=1\) and \(dV_Y=2Z\),
\begin{align*}
 j^*W
 &=dD\boxtimes V_Y-(K+2D)\boxtimes Z\\
 &=d(D\boxtimes V_Y)-K\boxtimes Z.
\end{align*}
Hence \(j^*w=-u\boxtimes z\). Restriction to \(\{e_0\}\times Y\) gives zero, since the restriction of \(u\) to a point is zero in degree one. Consequently \(s^*w=0\), proving \eqref{eq:Rw}.
\end{proof}

\begin{theorem}[Sevenfold nonvanishing]\label{thm:sevenfold}
The bracket \eqref{eq:sevenfold-bracket} does not contain zero. The value \(w\) has exact order two, has zero singular cohomology realization, and is nonzero modulo the full indeterminacy \(I_0\).
\end{theorem}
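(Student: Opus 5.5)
The plan is to assemble the three preceding lemmas around the additive map \(\mathcal R\) of \eqref{eq:R}. Every value of the bracket \eqref{eq:sevenfold-bracket} has the form \(w+i\) with \(i\in I_0\), by Lemma~\ref{lem:indeterminacy}. Lemma~\ref{lem:R-indeterminacy} shows \(\mathcal R(I_0)=0\), so \(\mathcal R\) is constant on the whole bracket and descends to a homomorphism
\[
 \bar{\mathcal R}:L^4H^7(X_0)/I_0\longrightarrow L^4H^7(E\times Y).
\]
It therefore suffices to show \(\mathcal R(w)\neq0\). By Lemma~\ref{lem:R-value}, \(\mathcal R(w)=-u\boxtimes z\) with \(\bar u\neq0\) in \(L^1H^1(E)/2\).

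To see that \(u\boxtimes z\neq0\), reduce modulo two. By \eqref{eq:uz-nonzero}, which combines Theorem~\ref{thm:AZ} (with \(c=3,p=0\)) and \(\bar z\neq0\) from \eqref{eq:z-properties}, the image \(\overline{u\boxtimes z}\) is nonzero in \(L^4H^7(E\times Y)/2\). In particular \(u\boxtimes z\neq0\), and even \(u\boxtimes z\notin 2L^4H^7(E\times Y)\). We must also check that the reduction in \eqref{eq:AZ} is compatible with the integral exterior product, so that \(\overline{u\boxtimes z}\) is the image of \(\bar u\otimes\bar z\). This follows from bilinearity of the exterior product, since \(\bar x\) denotes reduction in the quotient group \(G/2G\). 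Hence \(\mathcal R(w+i)=-u\boxtimes z\neq0\) for every \(i\in I_0\), so \(0\) is not in the bracket and the class of \(w\) modulo \(I_0\) is nonzero.

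The remaining assertions about \(w\) follow quickly. Lemma~\ref{lem:w-properties} gives \(2w=0\) and \(\cl(w)=0\). Since \(w\notin I_0\), and \(0\in I_0\), we get \(w\neq0\), so \(w\) has exact order two.

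I expect no real obstacle here, since the work lies in the lemmas. The one point needing care is that \(\mathcal R\) is merely additive. The argument uses only that \(\mathcal R\) is a group homomorphism vanishing on \(I_0\), not any multiplicativity. Multiplicativity of \(j^*\) and \(s^*\) was used only inside Lemma~\ref{lem:R-indeterminacy} and Lemma~\ref{lem:R-value}.
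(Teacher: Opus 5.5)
Your proposal is correct and follows the paper's proof essentially step for step. The detector \(\mathcal R\) kills \(I_0\) by Lemma~\ref{lem:R-indeterminacy}, and it sends \(w\) to \(-u\boxtimes z\), which is nonzero modulo two by \eqref{eq:uz-nonzero}; hence \(w\notin I_0\), and Lemma~\ref{lem:w-properties} gives \(2w=0\) and \(\cl(w)=0\). Your explicit observation that \(w\neq0\) because \(0\in I_0\) completes the ``exact order two'' claim, which the paper leaves implicit.
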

\begin{proof}
Equations~\eqref{eq:z-properties}, \eqref{eq:u-reduction}, and Theorem~\ref{thm:AZ} imply
\[
 \overline{u\boxtimes z}\neq0\quad\text{in }L^4H^7(E\times Y)/2.
\]
Thus \(\mathcal R(w)\neq0\), whereas Lemma~\ref{lem:R-indeterminacy} gives \(\mathcal R(I_0)=0\). Therefore \(w\notin I_0\). By Lemma~\ref{lem:indeterminacy}, the bracket is the coset \(w+I_0\) and does not contain zero. Lemma~\ref{lem:w-properties} now proves the assertions about order and cycle class.
\end{proof}

\begin{remark}[Why restriction does not trivialize the argument]\label{rem:restriction}
The restricted bracket on \(E\times Y\) has first class zero and therefore contains zero. We do \emph{not} deduce nonvanishing from the nonvanishing of that restricted bracket. Rather, \(\mathcal R\) annihilates the \emph{image of the global indeterminacy} while its value on the chosen global defining system is nonzero. The class \(u\) is a degree-one class on \(E\) that cannot arise by restriction from \(H^1(S;\Z)\), since the latter group is zero. This failure to extend is exactly what the calculation uses.
\end{remark}

\section{An eightfold with positive-degree classes}\label{sec:eightfold}
\subsection{The projective-line transfer}
Set
\[
 X=X_0\times\PP^1,\qquad \pi:X\longrightarrow X_0,
 \qquad h=c_1(\mathcal O_{\PP^1}(1)).
\]
Take the morphic classes
\begin{equation}\label{eq:eightfold-classes}
 a=\pi^*t_S\in L^1H^2(X),\quad
 b=2h\in L^1H^2(X),\quad
 c=\pi^*z\in L^3H^6(X).
\end{equation}
Their adjacent products vanish. Multiplying each null-homotopy in \eqref{eq:defining-system} by a closed representative of \(h\), using the external product with \(\PP^1\), gives the value
\begin{equation}\label{eq:wtilde}
 \widetilde w=\pi^*w\cdot h
 \in\br{a,b,c}_{\mor}\subset L^5H^9(X).
\end{equation}
In particular, this is a value obtained from a specified defining system, not only a class in the anticipated target.

The full indeterminacy is
\begin{equation}\label{eq:J}
 J=aL^4H^7(X)+L^2H^3(X)c.
\end{equation}
The pushforward \eqref{eq:projective-push} and the projection formula give
\begin{equation}\label{eq:push-J}
 \pi_*(J)\subseteq I_0,\qquad \pi_*(\widetilde w)=w.
\end{equation}
Explicitly, the two groups \(L^4H^7(X)\) and \(L^2H^3(X)\) push to \(L^3H^5(X_0)\) and \(L^1H^1(X_0)\), respectively. Thus \(\widetilde w\in J\) would imply \(w\in I_0\), contrary to Theorem~\ref{thm:sevenfold}. We have proved
\begin{equation}\label{eq:eightfold-nonzero}
 0\notin\br{a,b,c}_{\mor},\qquad
 2\widetilde w=0,\qquad \cl(\widetilde w)=0.
\end{equation}
The nonzero order-two assertion follows also from \(\pi_*\widetilde w=w\neq0\).

\subsection{Lawson degrees and invisibility to singular homology}
Since \(\dim_\C X=8\), define
\[
 x_1=D_X(a),\qquad x_2=D_X(b),\qquad x_3=D_X(c),
 \qquad \xi=D_X(\widetilde w).
\]
The complete degree conversion is
\begin{equation}\label{eq:degree-table}
 \begin{array}{c|c|c}
 \text{class}&\text{morphic group}&\text{Lawson group}\\ \hline
 a,b&L^1H^2(X)&L_7H_{14}(X)\\
 c&L^3H^6(X)&L_5H_{10}(X)\\
 \widetilde w&L^5H^9(X)&L_3H_7(X).
 \end{array}
\end{equation}
The Lawson indeterminacy is exactly
\begin{equation}\label{eq:lawson-final-indeterminacy}
 I_L=x_1\bullet L_4H_9(X)+L_6H_{13}(X)\bullet x_3.
\end{equation}
Duality and \eqref{eq:eightfold-nonzero} give
\[
 0\notin\br{x_1,x_2,x_3}_L,\qquad
 \xi\notin I_L,\qquad 2\xi=0,\qquad \Phi_{3,7}(\xi)=0.
\]
All groups displayed here are in the ordinary Lawson range.

The third class has \(\cl(c)=0\), so the singular cohomology bracket
\(\br{\cl(a),\cl(b),0}\) contains zero. To see this directly, represent its third class by the zero cochain, take the second product's null-homotopy to be zero, and choose any null-homotopy for the first product. Its defining value is then zero. By Theorem~\ref{thm:comparison}, the singular quotient class is the image of the nonzero morphic quotient class. Equivalently, \(\Phi(x_3)=0\) and
\[
 0\in\br{\Phi(x_1),\Phi(x_2),\Phi(x_3)}_{\sing}.
\]
Moreover, \(\Phi(\xi)=0\) holds before quotienting. This proves Theorem~\ref{thm:main}.

\subsection{A cycle-level representative}
The defining system has a concrete higher-Chow expression. Choose divisors \(D_S,D_T\) representing \(\lambda_S,K_T\), and rational functions \(f_S,f_T\) with
\[
 \operatorname{div}(f_S)=2D_S,\qquad
 \operatorname{div}(f_T)=2D_T.
\]
Let \(F_S\in z^1(S,1)\) and \(F_T\in z^1(T,1)\) be the standard admissible cubical chains for these rational functions, with boundary convention
\(\partial F_S=2D_S\), \(\partial F_T=2D_T\). Choose a cycle representative \(\Gamma\) of \(\gamma\) and a point \(P\in\PP^1\). The chain
\begin{equation}\label{eq:chow-chain}
 \Theta=(D_S\boxtimes F_T-F_S\boxtimes D_T)
       \boxtimes\Gamma\boxtimes[P]
\end{equation}
lies in \(z^5(X,1)\) and has boundary zero. All these products are external products on different factors; no unsupported internal intersection of unmoved chains is being used. The monoidal natural transformation \(\id\to Q^{\sst}\) of \cite[Theorem~4.11]{Heller} sends the factorwise motivic defining system determined by \(F_S,F_T,\Gamma,P\) to a defining system for the morphic bracket \(\br{a,b,c}_{\mor}\). Hence the morphic image of \([\Theta]\) is a value of that bracket and differs from the chosen value \(\widetilde w\) by the full indeterminacy \(J\). This weaker, naturality-based statement is all that is used later; no chain-level identification of two independently chosen models is required. The finite-coefficient higher-Chow--morphic comparison is stated explicitly in \cite[Theorem~2.8]{AZ}.

Formula~\eqref{eq:chow-chain} is therefore a geometric presentation of a source defining system. Its boundary cancellation alone does not prove nonvanishing; nonvanishing is supplied after comparison by the relative detector and the full-indeterminacy calculation.

\subsection{Infinitely many brackets on a fixed eightfold}
\begin{corollary}\label{cor:infinite}
The factors can be fixed so that \(X\) supports infinitely many nonzero integral triple Lawson brackets, invisible to singular homology, of the degrees in Theorem~\ref{thm:main}. Their specified values \(\xi_j\) can be chosen linearly independent modulo two in \(L_3H_7(X)\), with \(2\xi_j=0\) and \(\Phi(\xi_j)=0\). In particular, these values generate a subgroup isomorphic to \(\bigoplus_{j\geq1}\Z/2\) in the kernel of the Lawson cycle map.
\end{corollary}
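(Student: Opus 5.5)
The plan is to keep \(S,T,B,E\) fixed and vary only the homologically trivial class on \(B\). By Lemma~\ref{lem:gamma}, the images of \(\CH^2(B)_{\homol}\) span an infinite-dimensional subspace of \(A^2(B)/2\). Choose \(\gamma_1,\gamma_2,\dots\in\CH^2(B)_{\homol}\) whose reductions \(\bar\gamma_j\) are \(\F_2\)-linearly independent in \(A^2(B)/2=L^2H^4(B)/2\). Theorem~\ref{thm:schreieder} is stated for all of \(A^2(B)/2\) at once, and Theorem~\ref{thm:AZ} for all of \(L^cH^{2c-p}(Y)/2\) at once. Hence a single Enriques surface \(T\), elliptic curve \(E\) and surface \(S\) from Lemma~\ref{lem:surface} work for every \(j\) simultaneously; this is the point of Remark~\ref{rem:choices}. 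Put \(z_j=t_T\boxtimes\gamma_j\) and \(c_j=\pi^*z_j\), and keep \(a=\pi^*t_S\) and \(b=2h\). Running Sections~\ref{sec:detector}--\ref{sec:eightfold} verbatim for each \(j\) gives brackets \(\br{a,b,c_j}_{\mor}\) with specified values \(\widetilde w_j=\pi^*w_j\cdot h\). Each \(\widetilde w_j\) satisfies \(2\widetilde w_j=0\) and \(\cl(\widetilde w_j)=0\), and does not lie in the full indeterminacy \(J_j\). Dualizing gives the \(\xi_j=D_X(\widetilde w_j)\in L_3H_7(X)\).

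For independence modulo two, I would use the same two detectors, now applied linearly. Suppose \(\sum_{j\in F}\xi_j\in 2L_3H_7(X)\) for a finite nonempty set \(F\). Apply \(D_X^{-1}\), then \(\pi_*\) (which sends \(2L^5H^9(X)\) into \(2L^4H^7(X_0)\)), then the additive map \(\mathcal R\) of \eqref{eq:R}. By Lemma~\ref{lem:R-value} this yields \(-u\boxtimes\sum_{j\in F}z_j\in 2L^4H^7(E\times Y)\). The class \(u\) comes from \(D\) and \(U\) only and is the same for every \(j\), and \(\boxtimes\) is bilinear, so this is the same as \(-u\boxtimes(t_T\boxtimes\gamma)\) with \(\gamma=\sum_{j\in F}\gamma_j\). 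Since \(\bar\gamma\neq0\) in \(A^2(B)/2\), Theorem~\ref{thm:schreieder} gives \(\overline{t_T\boxtimes\gamma}\neq0\) in \(L^3H^6(Y)/2\). Then \eqref{eq:AZ} with \(c=3,p=0\) shows \(\overline{u\boxtimes t_T\boxtimes\gamma}\neq0\), which is a contradiction. Therefore the \(\xi_j\) are independent modulo two. Since each has order two and lies in \(\ker\Phi\), they span a copy of \(\bigoplus_{j\ge1}\Z/2\) in the kernel of the Lawson cycle map.

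The main obstacle is making this linear combination argument legitimate. First, \(\mathcal R\) must be computed on a sum of values coming from different defining systems. This is fine because \(\mathcal R\), \(\pi_*\) and \(D_X^{-1}\) are additive, and Lemma~\ref{lem:R-value} identifies each \(\mathcal R(w_j)\) separately. Second, I need one \(E\) that detects \(u\boxtimes\) on the entire \(\F_2\)-span of the \(\bar z_j\), not merely on each \(\bar z_j\). I expect \eqref{eq:AZ} to handle this, since its injectivity is over the whole group \(L^3H^6(Y)/2\). The remaining claims, namely \(\xi_j\notin I_{L,j}\), \(2\xi_j=0\) and \(\Phi(\xi_j)=0\), are exactly Theorem~\ref{thm:main} applied to each \(\gamma_j\).
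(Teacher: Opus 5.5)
Your proposal is correct and follows essentially the same route as the paper's proof. Both fix \(T,E,S\) and the class \(u\), vary only \(\gamma_j\), and apply the additive detector \(\mathcal R\pi_*\) to send \(\widetilde w_j\mapsto -u\boxtimes z_j\). Both then invoke the injectivity of Schreieder's map and of \eqref{eq:AZ}. Your reduction of a linear combination to the single nonzero class \(\gamma=\sum_{j\in F}\gamma_j\) simply spells out the step the paper states as ``independence is preserved.''
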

\begin{proof}
By Lemma~\ref{lem:gamma}, choose homologically trivial \(\gamma_j\) whose reductions in \(A^2(B)/2\) are linearly independent. The fixed choice of \(T\) works for all of them, so the reductions of \(z_j=t_T\boxtimes\gamma_j\) are independent. The same elliptic curve \(E\) works for the whole group in \eqref{eq:AZ}, and the same surface \(S\), class \(u\), and linking cochain \(W_0\) can be used. Set \(w_j=[W_0]\boxtimes\gamma_j\), \(\widetilde w_j=h\pi^*w_j\), and \(\xi_j=D_X(\widetilde w_j)\).

The additive map \(\mathcal R\pi_*\) sends \(\widetilde w_j\) to \(-u\boxtimes z_j\), whose reductions modulo two are independent by Theorem~\ref{thm:AZ}. Thus the \(\xi_j\) are independent modulo two. Each has exact order two and zero image under the Lawson cycle map by the preceding construction, and its own bracket survives its full indeterminacy by the same argument. This argument does not identify the individual indeterminacies. Section~\ref{sec:nonzero-classes} will prove the stronger independence statement in a quotient by their sum.
\end{proof}

\section{Nonzero singular homology classes and fixed homological data}\label{sec:nonzero-classes}
The construction of Section~\ref{sec:eightfold} has a zero third singular homology class. This is not an essential limitation of the example. A square-zero projective factor permits a change of the third class without changing either the selected value or the full indeterminacy. Throughout this section all coefficients are integral.

\subsection{A perturbation of the third class}
Keep \(X,X_0,\pi,a,b,c,h,w,\widetilde w\) as in Section~\ref{sec:eightfold}. Fix an ample line bundle \(\Theta\) defining a principal polarization on \(B\), and put \(\vartheta:=c_1^{\mor}(\Theta)\in L^1H^2(B)\); we use the same symbol for its pullbacks. Define
\begin{equation}\label{eq:modified-third}
 c^+=c+a h\vartheta\in L^3H^6(X).
\end{equation}
The equalities \(2a=2c=0\) imply \(ab=bc^+=0\).

\begin{proposition}[Square-zero perturbation]\label{prop:perturbation}
The bracket \(\br{a,b,c^+}_{\mor}\) has exactly the same indeterminacy as \(\br{a,b,c}_{\mor}\), and \(\widetilde w\) is a value of both brackets. In particular,
\begin{equation}\label{eq:equal-cosets}
 \br{a,b,c^+}_{\mor}=\widetilde w+J
   =\br{a,b,c}_{\mor},
 \qquad 0\notin\br{a,b,c^+}_{\mor}.
\end{equation}
All three singular cohomology classes \(\cl(a),\cl(b),\cl(c^+)\) are nonzero.
\end{proposition}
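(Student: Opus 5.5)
First compare the two indeterminacies. By Lemma~\ref{lem:indeterminacy}, the indeterminacy of \(\br{a,b,c^+}_{\mor}\) is \(aL^4H^7(X)+L^2H^3(X)c^+\). Since \(c^+=c+ah\vartheta\), each element \(yc^+\) with \(y\in L^2H^3(X)\) equals \(yc+yah\vartheta\). The second summand lies in \(aL^4H^7(X)\) once \(y a\) is rewritten as \(\pm a y\). Graded commutativity holds on morphic cohomology groups because \(\MZ^{\sst}\) is commutative, even though no commutative cochain model is used. Hence the new indeterminacy is contained in \(J\), and the symmetric argument with \(c=c^+-ah\vartheta\) gives the reverse inclusion. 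So both brackets have indeterminacy exactly \(J\) from \eqref{eq:J}.

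Next, show that \(\widetilde w\) is a value of the perturbed bracket. The perturbation of \(c\) must be matched by a perturbation of the second null-homotopy. In the cochain model, choose closed representatives \(A',H,\Theta'\) of \(a,h,\vartheta\), pulled back to \(X\). The new third representative is \(C^+=C+A'H\Theta'\), so
\[
 B\,C^+=BC+2H\,A'H\Theta'
\]
with \(B=2H\). For the null-homotopy of \(BC^+\), keep the old one \(V'\) for \(BC\) and add \(H\,U'H\Theta'\), where \(dU'=2A'\). The extra product \(BA'H\Theta'\) then equals \(d(HU'H\Theta')\) up to a commutator of \(H\) past \(A'\).

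Any correction needed for these commutators is a cochain multiplied by \(H^2\) or by a closed commutator term times \(H\cdot H\). Because everything is an external product with \(\PP^1\), where \(h^2=0\) and indeed a representative with \(H\cdot H=0\) at cochain level can be chosen on \(\PP^1\) through the external factor, these corrections vanish. The new glued value \(W^+\) then differs from the old one \(\widetilde W\) by \(A'\cdot HU'H\Theta'\) plus a term \(\pm U\cdot A'H\Theta'\). After moving the \(H\)'s together, this difference is a closed cochain of the form (commutator)\(\cdot H^2\), which is zero or exact.

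This is the step I expect to be the main obstacle. The sign bookkeeping and the absence of strict commutativity must be handled carefully. I would first arrange all choices as external products over the factors \(S\), \(T\times B\), and \(\PP^1\), so that \(H\) lives only in the \(\PP^1\) factor and \(H\boxtimes H\)-type terms factor through \(h^2=0\) in \(L^2H^4(\PP^1)=0\). If a cochain-level zero cannot be arranged, I would fall back on showing that the difference class equals \(h^2\) times something, hence \(0\). With \(\widetilde w\) in both brackets and both indeterminacies equal to \(J\), equation \eqref{eq:equal-cosets} follows, and \(0\notin\) comes from \eqref{eq:eightfold-nonzero}.

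Finally, check nonvanishing of the three singular classes. The class \(\cl(b)=2\cl(h)\) is nonzero, since \(H^2(\PP^1;\Z)\) is free. For \(\cl(a)\), Lemma~\ref{lem:low-weight} identifies torsion in \(\NS(S)\) with torsion in \(H^2(S;\Z)\), so \(\cl(t_S)\neq0\) and it pulls back injectively along the projection, which has a section. For \(\cl(c^+)\), we have \(\cl(c)=0\), so \(\cl(c^+)=\cl(a)\,h\,\cl(\vartheta)\). This is nonzero by the Künneth formula in singular cohomology: the exterior product \(\cl(t_S)\boxtimes h\boxtimes\cl(\vartheta)\) has a nonzero torsion factor tensored with primitive free classes. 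Here \(\cl(\vartheta)\) is primitive in \(H^2(B;\Z)\) for a principal polarization, and the Künneth tensor summand injects.
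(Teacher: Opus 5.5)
Your proposal is correct and follows essentially the same route as the paper. It uses the same two-inclusion argument for \(J^+=J\), where you make explicit the graded commutativity of morphic cohomology that the paper's one-line step uses implicitly. It uses the same perturbed defining system (\(V_{23}=HV_Y+UH^2P_\vartheta\) in the paper), with error term \((AU-UA)H^2P_\vartheta\) killed in cohomology by \(h^2=0\), and the same K\"unneth/primitivity/section argument for the three singular classes. You should drop the aside that \(H\) can be chosen with \(H\cdot H=0\) at cochain level: it is unjustified and unnecessary, and your fallback is exactly the paper's argument (\(AU-UA\) is closed since \(d(AU)=d(UA)=2A^2\), so the error represents \([AU-UA]\,h^2\vartheta=0\)).
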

\begin{proof}
Write \(G^{q,m}=L^qH^m(X)\). The new indeterminacy is
\[
 J^+=aG^{4,7}+G^{2,3}(c+a h\vartheta).
\]
For \(v\in G^{2,3}\), the additional term \(v a h\vartheta\) lies in \(aG^{4,7}\), because \(vh\vartheta\in G^{4,7}\). Hence \(J^+\subseteq J\). Substituting \(c=c^+-a h\vartheta\) gives the reverse inclusion. Thus \(J^+=J\).

The assertion about a selected value requires a defining system, not just this equality of indeterminacies. Use the representatives \(A,U,Z,V_Y\) from \eqref{eq:defining-system}, with \(dU=2A\) and \(dV_Y=2Z\). Let \(H\) and \(P_\vartheta\) be closed representatives of \(h\) and \(\vartheta\) in the respective factor models. In the exterior tensor-product model take
\begin{equation}\label{eq:perturbed-system}
 C^+=Z+AHP_\vartheta,\qquad
 U_{12}=UH,\qquad V_{23}=HV_Y+UH^2P_\vartheta.
\end{equation}
The letters \(A,U\) come from \(S\), \(H\) from \(\PP^1\), and \(P_\vartheta\) from \(B\). Products on different tensor factors use the usual graded signs. The displayed cochains satisfy
\[
 dU_{12}=A(2H),\qquad dV_{23}=(2H)C^+.
\]
The corresponding bracket representative is
\begin{equation}\label{eq:perturbed-value}
 AV_{23}-U_{12}C^+
   =H(AV_Y-UZ)+(AU-UA)H^2P_\vartheta.
\end{equation}
Here \(AU-UA\) is closed: both \(d(AU)\) and \(d(UA)\) equal \(2A^2\). Its product with \(H^2P_\vartheta\) represents zero because \(h^2=0\) in the morphic cohomology of \(\PP^1\). Thus the class of \eqref{eq:perturbed-value} is \(\widetilde w\). This argument does not assume either \(AU=UA\) or \(H^2=0\) at the cochain level. Lemma~\ref{lem:indeterminacy} and Theorem~\ref{thm:sevenfold} now give \eqref{eq:equal-cosets}.

Put \(\tau=\cl(t_S)\in H^2(S;\Z)\). It has exact order two, since the integral divisor cycle map identifies \(\NS(S)\) with a subgroup of \(H^2(S;\Z)\). The integral class \(\cl(\vartheta)\) is primitive. Indeed, a principal polarization gives a unimodular alternating form on \(H_1(B;\Z)\); in a symplectic basis its degree-two class is \(\sum_{i=1}^3 e_i\wedge f_i\), and therefore has a coefficient equal to one. The cohomology of \(B\) and \(\PP^1\) is torsion-free, so the integral K\"unneth theorem gives
\[
 \tau\boxtimes\cl(\vartheta)\boxtimes h\neq0
       \quad\text{in }H^6(S\times B\times\PP^1;\Z).
\]
Pullback to \(X\) remains injective, since projection off the \(T\)-factor has a section. Consequently
\begin{equation}\label{eq:nonzero-singular-triple}
 \cl(a)=\tau\neq0,\qquad
 \cl(b)=2h\neq0,\qquad
 \cl(c^+)=\tau h\cl(\vartheta)\neq0.
\end{equation}
The first and third classes have exact order two, and the middle class has infinite order.
\end{proof}

\begin{theorem}[Three nonzero singular homology classes]\label{thm:nonzero-classes}
On the eightfold \(X\), there are classes
\[
 x_1,x_2\in L_7H_{14}(X;\Z),\qquad x_3^+\in L_5H_{10}(X;\Z)
\]
with vanishing adjacent products, all three \(\Phi(x_i)\) nonzero, and
\[
 0\notin\br{x_1,x_2,x_3^+}_L\subset L_3H_7(X;\Z).
\]
The bracket contains a specified value \(\xi\) of exact order two satisfying \(\Phi(\xi)=0\). The corresponding singular homology Massey bracket, despite its three nonzero classes, contains zero.
\end{theorem}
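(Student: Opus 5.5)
The plan is to transport Proposition~\ref{prop:perturbation} through Friedlander--Lawson duality, in the same way the proof of Theorem~\ref{thm:main} was obtained from \eqref{eq:eightfold-nonzero}. Set
\[
 x_1=D_X(a),\qquad x_2=D_X(b),\qquad x_3^+=D_X(c^+),\qquad \xi=D_X(\widetilde w).
\]
The degree table \eqref{eq:degree-table} applies unchanged, since \(c^+\in L^3H^6(X)\) has the same bidegree as \(c\). Hence \(x_1,x_2\in L_7H_{14}(X)\), \(x_3^+\in L_5H_{10}(X)\) and \(\xi\in L_3H_7(X)\).

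For the adjacent products, the definition \(x\bullet y=D_X(D_X^{-1}x\cdot D_X^{-1}y)\) reduces their vanishing to \(ab=0\) and \(bc^+=0\). Both are recorded before Proposition~\ref{prop:perturbation}: they follow from \(2a=0\), from \(2c=0\), and from \(2a\,h^2\vartheta=0\).

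Next, by the definition \(\br{x_1,x_2,x_3^+}_L=D_X\br{a,b,c^+}_{\mor}\) together with \eqref{eq:equal-cosets}, the Lawson bracket equals \(\xi+D_X(J)\). By Theorem~\ref{thm:lawson-degree}, \(D_X(J)\) is the full Lawson indeterminacy. Since \(D_X\) is an isomorphism and \(\widetilde w\notin J\), the bracket does not contain zero.

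The remaining properties of \(\xi\) all come from \eqref{eq:eightfold-nonzero}. Exact order two holds because \(2\widetilde w=0\) and \(\widetilde w\neq0\); the latter follows from \(\pi_*\widetilde w=w\neq0\). For the cycle class, \eqref{eq:compatibility} gives \(\Phi(\xi)=\PD_X\cl(\widetilde w)=0\). Applying \eqref{eq:compatibility} again to \eqref{eq:nonzero-singular-triple} shows that each \(\Phi(x_i)\) equals \(\PD_X\) of a nonzero class, and is therefore nonzero because \(\PD_X\) is an isomorphism.

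For the last assertion, Theorem~\ref{thm:comparison} gives
\[
 \Phi(\xi)\in\br{\Phi(x_1),\Phi(x_2),\Phi(x_3^+)}_{\sing}.
\]
Since \(\Phi(\xi)=0\), the singular bracket contains zero.

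The one step that needs care is that the singular bracket is defined. The inclusion above makes this automatic, but it can also be seen directly: \(\cl(a)\cl(b)=2\tau h=0\) and \(\cl(b)\cl(c^+)=2h\cdot\tau h\cl(\vartheta)=0\), as \(\tau\) has order two and \(h^2=0\).

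The substantive content, namely the choice of a defining system giving the same value \(\widetilde w\), has already been done in \eqref{eq:perturbed-value}. So I expect no genuine obstacle here; this is pure bookkeeping under duality.
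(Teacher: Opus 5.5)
Your proposal is correct and follows the paper's proof essentially step for step: you define \(x_1,x_2,x_3^+,\xi\) by duality from Proposition~\ref{prop:perturbation}, read off nonvanishing, exact order two, and \(\Phi(\xi)=0\) from \eqref{eq:equal-cosets}, \eqref{eq:eightfold-nonzero} and \eqref{eq:compatibility}, get \(\Phi(x_i)\neq0\) from \eqref{eq:nonzero-singular-triple}, and show the singular bracket contains zero by realizing the defining system with value \(\widetilde w\). The only cosmetic difference is that you phrase this last step through the inclusion of Theorem~\ref{thm:comparison} applied to \(\xi\), whereas the paper speaks directly of the realization of the morphic defining system; these are the same argument.
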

\begin{proof}
Take \(x_1=D_X(a)\), \(x_2=D_X(b)\), and \(x_3^+=D_X(c^+)\). Proposition~\ref{prop:perturbation} gives the same value \(\xi=D_X(\widetilde w)\) and the same full Lawson indeterminacy as in \eqref{eq:lawson-final-indeterminacy}. Its exact order and vanishing image under the Lawson cycle map were proved in Section~\ref{sec:eightfold}. Equations~\eqref{eq:nonzero-singular-triple} and \eqref{eq:compatibility} show that \(\Phi(x_1),\Phi(x_2),\Phi(x_3^+)\) are all nonzero. Finally, the multiplicative realization sends the morphic defining system to a singular-cohomology defining system whose selected value is zero; Poincar\'e duality therefore gives \(\Phi(\xi)=0\) and shows that the singular-homology Massey bracket contains zero. This proves the assertion without using a zero singular-homology class.
\end{proof}

\subsection{Infinitely many secondary classes over one singular homology triple}
We can strengthen Corollary~\ref{cor:infinite} by imposing fixed, nonzero singular homology data and then removing \emph{all} the indeterminacies simultaneously. Choose the independent cycles \(\gamma_j\) from that corollary on the same \(B\), and put
\[
 c_j=\pi^*(t_T\boxtimes\gamma_j),\qquad
 c_j^+=c_j+a h\vartheta,\qquad
 \widetilde w_j=h\pi^*w_j.
\]
For brevity write
\begin{equation}\label{eq:sum-indeterminacy}
 J_j=aL^4H^7(X)+L^2H^3(X)c_j,
 \qquad J_\Sigma=\sum_{j\geq1}J_j\subset L^5H^9(X).
\end{equation}
Each sum means finite sums of elements. By Proposition~\ref{prop:perturbation}, \(J_j\) is also the full indeterminacy for \(\br{a,b,c_j^+}_{\mor}\). The individual groups \(J_j\) are not asserted to coincide.

\begin{theorem}[A common quotient over fixed nonzero singular homology data]\label{thm:fixed-data}
All the brackets \(\br{a,b,c_j^+}_{\mor}\) have the same three singular cohomology classes, namely the nonzero triple in \eqref{eq:nonzero-singular-triple}. Their specified values \(\widetilde w_j\) remain linearly independent modulo
\[
 J_\Sigma+2L^5H^9(X).
\]
In particular, their images generate a subgroup isomorphic to
\(\bigoplus_{j\geq1}\Z/2\) in \(L^5H^9(X)/J_\Sigma\).

Equivalently, put \(I_\Sigma=D_X(J_\Sigma)\), and let \(I_{\sing}\) be the ordinary indeterminacy of the one fixed singular homology triple. Then the comparison
\begin{equation}\label{eq:common-quotient-comparison}
 \frac{L_3H_7(X;\Z)}{I_\Sigma}
 \longrightarrow
 \frac{H_7(X;\Z)}{I_{\sing}}
\end{equation}
has a subgroup \(\bigoplus_{j\geq1}\Z/2\) in its kernel represented by these Lawson--Massey values. Each value also lies in the unquotiented kernel of the Lawson cycle map.
\end{theorem}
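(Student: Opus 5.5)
The plan is to build a single additive detector that kills every $J_j$ at once and then evaluate it on the specified values. Take the composite $\mathcal R\pi_*:L^5H^9(X)\to L^4H^7(E\times Y)$, with $\mathcal R=j^*-p^*s^*$ as in \eqref{eq:R}. The first step is to show $\mathcal R\pi_*(J_j)=0$ for every $j$.

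For the summand $aL^4H^7(X)$, the projection formula gives $\pi_*(a\,y)=t_S\,\pi_*y\in t_SL^3H^5(X_0)$. This is killed by $\mathcal R$ because $j^*t_S=s^*t_S=0$, exactly as in Lemma~\ref{lem:R-indeterminacy}. For the summand $L^2H^3(X)c_j$, the projection formula gives $\pi_*(v\,\pi^*z_j)=\pi_*(v)\,z_j$ with $\pi_*v\in L^1H^1(X_0)=\operatorname{pr}_Y^*H^1(Y;\Z)$. Since $z_j$ is also pulled back from $Y$, the product is pulled back from $Y$, so $j^*$ and $p^*s^*$ agree on it. Neither argument uses anything about the individual $c_j$ beyond being pulled back from $Y$, so $\mathcal R\pi_*$ kills $J_\Sigma$, which consists of finite sums. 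It also kills $2L^5H^9(X)$ after reduction modulo two. So we obtain a homomorphism
\[
 \overline{\mathcal R\pi_*}:L^5H^9(X)/(J_\Sigma+2L^5H^9(X))\longrightarrow L^4H^7(E\times Y)/2.
\]

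The second step is evaluation. Since $\pi_*(h\pi^*w_j)=w_j$, Lemma~\ref{lem:R-value} applied to the defining system with $\gamma_j$ in place of $\gamma$ gives $\mathcal R\pi_*\widetilde w_j=-u\boxtimes z_j$. Here $u$ is the same class for all $j$, because $u$ depends only on $S$, $E$ and $U$. The reductions $\bar z_j$ are linearly independent in $L^3H^6(Y)/2$, by Lemma~\ref{lem:gamma} and the injectivity in Theorem~\ref{thm:schreieder}. Theorem~\ref{thm:AZ} is injective on the whole tensor product $\F_2\bar u\otimes L^3H^6(Y)/2$, not only on single elements, so the $\overline{u\boxtimes z_j}$ are independent. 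Hence any finite $\F_2$-combination of the $\widetilde w_j$ lying in $J_\Sigma+2L^5H^9(X)$ must be trivial. The main care point is precisely here: we need Theorem~\ref{thm:AZ} as an injectivity statement on the full group, which it is, rather than just nonvanishing of each image. Since each $2\widetilde w_j=0$, the images generate $\bigoplus_{j\ge1}\Z/2$ in $L^5H^9(X)/J_\Sigma$.

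For the singular data, note that $\cl(c_j^+)=\cl(c_j)+\tau h\cl(\vartheta)=\tau h\cl(\vartheta)$ because $\cl(\gamma_j)=0$. By Proposition~\ref{prop:perturbation} and \eqref{eq:nonzero-singular-triple}, the three realized classes are the same nonzero triple for every $j$. Proposition~\ref{prop:perturbation} also shows that $\widetilde w_j$ is a value of $\br{a,b,c_j^+}_{\mor}$, with full indeterminacy $J_j$. To pass to Lawson homology, apply $D_X$ and set $I_\Sigma=D_X(J_\Sigma)$. Theorem~\ref{thm:comparison} gives $\Phi(I_{L,j})\subseteq I_{\sing}$ for the fixed triple, hence $\Phi(I_\Sigma)\subseteq I_{\sing}$, so \eqref{eq:common-quotient-comparison} is well defined. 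Finally, $\Phi(\xi_j)=\PD\,\cl(\widetilde w_j)=\PD(h\,\pi^*(\cl[W_0]\boxtimes\cl\gamma_j))=0$ by \eqref{eq:compatibility} and \eqref{eq:factor-zero}. So each $\xi_j$ lies in the unquotiented kernel, and a fortiori the subgroup they generate lies in the kernel of \eqref{eq:common-quotient-comparison}.
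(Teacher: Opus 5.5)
Your proposal is correct and is essentially the paper's proof. It uses the same detector $\mathcal R\pi_*$, shown to kill each $J_j$ (hence $J_\Sigma$) via the projection formula and the argument of Lemma~\ref{lem:R-indeterminacy}, and evaluated as $-u\boxtimes z_j$ on $\widetilde w_j$; independence then comes from Theorem~\ref{thm:schreieder} and the injectivity in Theorem~\ref{thm:AZ}, and the singular side is handled by Theorem~\ref{thm:comparison} together with $\cl(\gamma_j)=0$ and \eqref{eq:factor-zero}.
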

\begin{proof}
The equality of the singular cohomology triples follows from \(\cl(c_j)=0\). Define
\[
 \mathcal D=\mathcal R\pi_*:
 L^5H^9(X)\longrightarrow L^4H^7(E\times Y),
\]
where \(\mathcal R\) is \eqref{eq:R}. Projection along \(\PP^1\) sends \(J_j\) into
\[
 t_SL^3H^5(X_0)+L^1H^1(X_0)z_j,
 \qquad z_j=t_T\boxtimes\gamma_j.
\]
The proof of Lemma~\ref{lem:R-indeterminacy} applies to every \(z_j\); hence
\begin{equation}\label{eq:common-detector}
 \mathcal D(J_\Sigma)=0,\qquad
 \mathcal D(\widetilde w_j)=-u\boxtimes z_j.
\end{equation}
The reductions of \(z_j\) are independent by Theorem~\ref{thm:schreieder}, and Theorem~\ref{thm:AZ} preserves this independence after multiplication by the fixed nonzero \(\bar u\). If a finite combination with coefficients \(\epsilon_j\in\{0,1\}\) satisfies
\[
 \sum_j\epsilon_j\widetilde w_j\in J_\Sigma+2L^5H^9(X),
\]
apply \(\mathcal D\) and reduce modulo two. All \(\epsilon_j\) must be zero. Each \(\widetilde w_j\) has order two, so the asserted direct sum follows.

For every \(j\), the comparison theorem sends \(D_X(J_j)\) into the indeterminacy of the same singular triple. It therefore also sends their sum into that subgroup, making \eqref{eq:common-quotient-comparison} well-defined. Each \(\Phi(D_X\widetilde w_j)\) is zero before quotienting. This proves the kernel assertion.
\end{proof}

\begin{remark}
The point of \(J_\Sigma\) is that the independence assertion is stronger than a list of unrelated nonzero brackets. Even after imposing every indeterminacy relation arising from the whole family, the specified order-two values remain independent. On the other hand, this statement does not say that all integral singular secondary operations on \(X\) vanish. It concerns the one displayed singular triple and its fixed quotient class.
\end{remark}

\section{Transfer and varieties with ample canonical bundle}\label{sec:transfer}
The examples are not confined to products or to varieties with a projective-line factor. This section applies the ordinary projection formula to \emph{full Massey indeterminacy}. Pushforward itself is not claimed to preserve Massey brackets.

\subsection{An indeterminacy-compatible transfer principle}
Write \(\mathcal H^{q,m}(V)=L^qH^m(V;\Z)\). Let \(f:W\to V\) be a morphism of smooth projective varieties. Suppose that, in all the bidegrees involved, there is an additive, bidegree-preserving map
\[
 T:\mathcal H^{*,*}(W)\longrightarrow\mathcal H^{*,*}(V)
\]
satisfying the bimodule projection formula and a scalar trace identity:
\begin{equation}\label{eq:transfer-axioms}
 \begin{split}
 T(f^*r\cdot v)&=r\cdot T(v),\\
 T(v\cdot f^*r)&=T(v)\cdot r,\\
 Tf^*&=d\,\id
 \end{split}
 \qquad(d\in\Z).
\end{equation}
These conditions are conditions on cohomology groups, not a claim that \(T\) is multiplicative.

\begin{proposition}[Transfer of nonvanishing]\label{prop:transfer}
Let \(\br{a_1,a_2,a_3}_{\mor}=w+I\) be a defined triple bracket on \(V\), and let \(I'\) be the full indeterminacy of the pulled-back triple on \(W\). Then
\[
 T(I')\subseteq I.
\]
If \(d[w]\neq0\) in \(\mathcal H^{Q,M}(V)/I\), the bracket on \(W\) does not contain zero. In particular, a nonzero quotient class of exact order \(N\) survives whenever \(\gcd(d,N)=1\).

If additionally \(Nw=0\) and \(\cl(w)=0\), then \(f^*w\) has exact order \(N\) and zero singular cohomology realization whenever \([w]\) has exact order \(N\) and \(\gcd(d,N)=1\).
\end{proposition}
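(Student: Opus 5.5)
The argument is pure bookkeeping with the axioms \eqref{eq:transfer-axioms}. Set \(a_i'=f^*a_i\) on \(W\). By Proposition~\ref{prop:naturality}, the pulled-back triple is defined and \(f^*w\) is one of its values, so
\[
 \br{a_1',a_2',a_3'}_{\mor}=f^*w+I',
 \qquad
 I'=a_1'\,\mathcal H^{q_2+q_3,m_2+m_3-1}(W)+\mathcal H^{q_1+q_2,m_1+m_2-1}(W)\,a_3',
\]
by Lemma~\ref{lem:indeterminacy}. First we prove \(T(I')\subseteq I\). For \(v\in\mathcal H^{q_2+q_3,m_2+m_3-1}(W)\), the left projection formula gives
\[
 T(f^*a_1\cdot v)=a_1\,T(v)\in a_1\mathcal H^{q_2+q_3,m_2+m_3-1}(V).
\]
Symmetrically, the right projection formula gives \(T(v'\cdot f^*a_3)=T(v')\,a_3\). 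Since \(T\) is additive and bidegree-preserving, it carries finite sums of such elements into \(I\).

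Next comes the nonvanishing. Suppose \(0\in f^*w+I'\), so \(f^*w\in I'\). Applying \(T\) and the trace identity gives \(dw=Tf^*w\in T(I')\subseteq I\), hence \(d[w]=0\) in \(\mathcal H^{Q,M}(V)/I\). This contradicts the hypothesis. For the gcd refinement, suppose \([w]\) has exact order \(N\) and \(\gcd(d,N)=1\). Multiplication by \(d\) is then injective on the cyclic group \(\langle[w]\rangle\cong\Z/N\), so \(d[w]\neq0\). Applying the same argument to multiples \(k f^*w\) shows further that the class of \(f^*w\) modulo \(I'\) has order divisible by \(N\).

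For the final assertion, \(Nf^*w=f^*(Nw)=0\), and \(\cl(f^*w)=f^*\cl(w)=0\) by multiplicativity and naturality of realization. It remains to see that no proper divisor \(k\mid N\), \(k<N\), kills \(f^*w\). If \(kf^*w=0\), then
\[
 0=T(kf^*w)=kd\,w,
\]
so \(kd[w]=0\) in the quotient. Then \(N\mid kd\), and since \(\gcd(d,N)=1\) this forces \(N\mid k\), a contradiction.

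The only point needing care is the compatibility of \(T\) with products in the correct order and with the exact bidegrees of the indeterminacy summands, since the cochain model is not assumed commutative. This is exactly why both the left and right projection formulas are imposed, and each is used on the corresponding summand of \(I'\). Nothing requires \(T\) to be multiplicative or to act on defining systems: the argument uses only the pulled-back value \(f^*w\) and cohomology-level identities.
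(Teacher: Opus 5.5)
Your proof is correct and follows the paper's argument step for step: the left and right projection formulas on the two indeterminacy summands, naturality to supply the value \(f^*w\), the trace identity to turn \(f^*w\in I'\) into \(dw\in I\), and the \(kd[w]=0\Rightarrow N\mid k\) argument for exact order. Your additional observation, that the class of \(f^*w\) modulo \(I'\) has order divisible by \(N\), is also correct, though the statement does not require it.
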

\begin{proof}
The two terms of \(I'\) are \(f^*a_1\mathcal H^{q_2+q_3,m_2+m_3-1}(W)\) and \(\mathcal H^{q_1+q_2,m_1+m_2-1}(W)f^*a_3\). Applying \eqref{eq:transfer-axioms} sends them into the respective terms of \(I\). Naturality supplies the value \(f^*w\) of the pulled-back bracket. If that bracket contained zero, its coset description would imply \(f^*w\in I'\), and hence \(dw\in I\), contrary to the assumption.

If a positive integer \(k\) kills \(f^*w\), then \(kdw=0\), and therefore \(kd[w]=0\). The order and coprimality assumptions force \(N\mid k\); since \(Nf^*w=0\), its exact order is \(N\). Vanishing of the cycle class follows by naturality.
\end{proof}

\begin{corollary}[Generically finite morphisms and blowups]\label{cor:odd-degree}
Let \(f:W\to V\) be a surjective generically finite morphism of degree \(d\) between smooth connected complex projective varieties of the same dimension. Then \(T=f_*\) satisfies \eqref{eq:transfer-axioms}. Consequently every example above survives pullback under an odd-degree such morphism. This includes every smooth projective birational morphism to an example, and in particular a blowup along a smooth center.
\end{corollary}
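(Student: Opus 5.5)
The plan is to check the three conditions of \eqref{eq:transfer-axioms} for \(T=f_*\), the proper pushforward in morphic cohomology. Since \(W\) and \(V\) are smooth projective of the same dimension \(n\), this pushforward preserves bidegree. Concretely, it is the Lawson-homology pushforward \(f_*:L_{n-q}H_{2n-m}(W)\to L_{n-q}H_{2n-m}(V)\) conjugated by the Friedlander--Lawson dualities \(D_W,D_V\) of \eqref{eq:duality}; this is the same pushforward used in \eqref{eq:projective-push}. Additivity is immediate.

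The two projection-formula identities are the morphic/Lawson projection formula \(f_*(f^*r\cdot v)=r\cdot f_*v\), which is compatible with duality and cited in Section~\ref{sec:models} from \cite{FG,HuLi}. The formula is usually stated with \(f^*r\) on one side. The other-sided version \(f_*(v\cdot f^*r)=f_*v\cdot r\) follows from graded commutativity of the morphic product on cohomology groups. Graded commutativity holds because \(\MZ^{\sst}\) is a commutative ring spectrum, so the cohomology ring is graded commutative even though we do not use a strictly commutative cochain model. Both sides then acquire the same sign \((-1)^{|v||r|}\).

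The trace identity \(f_*f^*=d\,\id\) follows from the first projection formula with \(v=1\): \(f_*f^*r=r\cdot f_*(1)\). It therefore suffices to show \(f_*1=d\cdot1\) in \(L^0H^0(V)\). Under duality, \(1\) corresponds to the fundamental cycle \([W]\in L_nH_{2n}(W)=\Z[W]\), and \(f_*[W]=d[V]\) because \(f\) is surjective and generically finite of degree \(d\). This uses connectedness of \(V\), which gives \(L_nH_{2n}(V)=\Z\). I expect this step, and the precise citation of the projection formula for morphic cohomology of smooth projective varieties in the needed bidegrees, to be the main point to get right. Apart from that, the argument is the standard cycle-level computation.

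Proposition~\ref{prop:transfer} then applies. Every example above has a specified value of exact order \(N=2\) that is nonzero modulo the full indeterminacy and has \(\cl(w)=0\). For odd \(d\) we have \(\gcd(d,2)=1\) and \(d[w]=[w]\neq0\), so the pulled-back brackets do not contain zero, and \(f^*w\) has exact order two and zero realization. Lawson duality on \(W\) transports this to Lawson homology. For the independence statement of Theorem~\ref{thm:fixed-data}, the map \(T\) sends \(\sum_j J'_j\) into \(J_\Sigma\) and multiplication by odd \(d\) is bijective on \(\F_2\), so independence also survives. A smooth projective birational morphism has \(d=1\), which covers blowups along smooth centers.
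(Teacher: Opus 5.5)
Your proposal is correct and follows essentially the same route as the paper. The paper cites the morphic projection formula \cite[Proposition~2.4(3)]{HuLi}, observes that \(f_*1=d\) because the fundamental cycle pushes forward with generic degree \(d\), deduces \(f_*f^*r=dr\), and applies Proposition~\ref{prop:transfer} using that every relevant quotient class has exact order two. Your extra steps fill in details the paper leaves implicit or defers to Proposition~\ref{prop:transfer-family}: the right-sided projection formula from graded commutativity (valid because the sign depends only on bidegrees, which \(f_*\) and \(f^*\) preserve), and the fixed-data independence.
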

\begin{proof}
Morphic pushforward, defined using duality, satisfies the projection formula; see \cite[Proposition~2.4(3)]{HuLi}. The pushforward of the unit is \(d\), since the corresponding top-dimensional cycle pushes forward with generic degree \(d\). Thus \(f_*f^*r=d r\). Every nonzero quotient class used here has exact order two, so Proposition~\ref{prop:transfer} applies when \(d\) is odd.
\end{proof}

\begin{corollary}[Projective bundles without a chosen section]\label{cor:projective bundle}
Let \(p:\PP(\mathcal E)\to V\) be the projective bundle of a vector bundle of rank \(r+1\), and let \(\zeta=c_1(\mathcal O_{\PP(\mathcal E)}(1))\). Then
\begin{equation}\label{eq:bundle-transfer}
 T(\eta)=p_*(\zeta^r\eta)
\end{equation}
satisfies \eqref{eq:transfer-axioms} with \(d=1\). All the nonvanishing and singular-homology-invisibility statements therefore survive pullback to \(\PP(\mathcal E)\), whether or not the bundle admits a section.
\end{corollary}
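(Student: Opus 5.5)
We need to verify the three identities of \eqref{eq:transfer-axioms} for \(T(\eta)=p_*(\zeta^r\eta)\), and then invoke Proposition~\ref{prop:transfer} with \(d=1\). Since \(d=1\) is coprime to every \(N\), the statement about exact order two and vanishing realization will follow at once.

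For the two bimodule identities, I will use the projection formula for the proper smooth morphism \(p\) in morphic cohomology, defined through Friedlander--Lawson duality as in Corollary~\ref{cor:odd-degree} \cite[Proposition~2.4(3)]{HuLi}. For the left identity,
\[
 T(p^*r\cdot v)=p_*(\zeta^r p^*r\, v).
\]
Here I would like to move \(p^*r\) to the front, using graded commutativity of morphic cohomology. Heller's \(\MZ^{\sst}\) is a commutative ring spectrum, so the cohomology ring is graded commutative even though no commutative cochain model is used. Moreover \(\zeta^r\) has even degree \(2r\), so no sign appears and \(\zeta^r p^*r=p^*r\,\zeta^r\). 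The projection formula then gives \(r\cdot p_*(\zeta^r v)=r\cdot T(v)\). The right identity is easier: \(\zeta^r v\,p^*r\) is already in the required form, and the projection formula on the other side gives \(T(v)\cdot r\).

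The scalar trace identity says \(p_*(\zeta^r p^*r)=r\). By the projection formula, this reduces to \(p_*(\zeta^r)=1\) in \(L^0H^0(V)\). I would check this by the following steps.
\begin{enumerate}
\item Pass to the Lawson side: \(\zeta^r\) is dual to the class of a linear sub-bundle section locus, which is a family of points over \(V\), one in each fibre.
\item Note that \(p_*\) of this locus is the fundamental cycle \([V]\), i.e.\ the unit. Equivalently, restrict to a fibre \(\PP^r\), where \(\zeta^r\) is the point class of degree one.
\item Alternatively, use the projective bundle formula \cite{FG,HuLi}, which expresses the morphic cohomology of \(\PP(\mathcal E)\) as a free module on \(1,\zeta,\dots,\zeta^r\) with \(p_*\) extracting the \(\zeta^r\)-coefficient. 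This is the same normalization as \eqref{eq:projective-push} with \(\pi_*h=1\).
\end{enumerate}
Locally, where \(\mathcal E\) is trivial, this reduces to that \(\PP^1\) case (or \(\PP^r\)). Since \(L^0H^0(V)=\Z\) for connected \(V\), one global degree computation suffices.

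Finally, apply Proposition~\ref{prop:transfer} to each example: Theorems~\ref{thm:main}, \ref{thm:nonzero-classes} and \ref{thm:fixed-data}. For the family version I would apply \(T\) to \(J'_\Sigma\), landing in \(J_\Sigma\) termwise. Then \(\mathcal D\circ T\) serves as the detector for the pulled-back values \(p^*\widetilde w_j\), because \(Tp^*=\id\). Invisibility to singular homology transfers by naturality of \(\cl\). The singular triples remain nonzero because \(p^*\) is injective on singular cohomology, by the Leray--Hirsch theorem.

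I expect the main obstacle to be the left projection formula. It needs commutativity at the level of cohomology classes and the correct sign conventions for the morphic pushforward. If graded commutativity were unavailable, I would instead define \(T\) separately on left and right via \(p_*(\zeta^r\cdot-)\) and \(p_*(-\cdot\zeta^r)\). I would then check that both are \(p_*\) of products with the central even class \(\zeta^r\).
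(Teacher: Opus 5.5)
Your proposal is correct and matches the paper's proof. The projection formula gives the two bimodule identities, the projective bundle formula gives \(p_*(\zeta^r)=1\) and hence \(Tp^*=\id\), and \(T\) is bidegree-preserving because multiplication by \(\zeta^r\) raises the bidegree by \((r,2r)\) and \(p_*\) lowers it by the same amount. Proposition~\ref{prop:transfer} then transports the examples.

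Your added details simply spell out what the paper leaves to the projection formula and to Proposition~\ref{prop:transfer-family}: graded commutativity with the even-degree class \(\zeta^r\) for the left identity, Leray--Hirsch injectivity for the nonzero singular classes, and \(\mathcal D\circ T\) as the common detector. For \(p_*(\zeta^r)=1\), rely on your third check, since the first tacitly presumes a section-like representative of \(\zeta^r\).
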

\begin{proof}
The projective bundle formula \cite[Corollary~5.1]{HuLi} gives \(p_*(\zeta^r)=1\), and pushforward lowers weight by \(r\) and cohomological degree by \(2r\). Thus \eqref{eq:bundle-transfer} preserves bidegrees. The projection formula proves the other assertions in \eqref{eq:transfer-axioms}. Apply Proposition~\ref{prop:transfer}.
\end{proof}

\begin{corollary}[Products with arbitrary smooth projective factors]\label{cor:product-stability}
Let \(W\) be any nonempty smooth connected complex projective variety. Pullback along \(p:V\times W\to V\) preserves every nonzero bracket and every fixed-data family constructed above.
\end{corollary}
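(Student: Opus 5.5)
The plan is to deduce this from Proposition~\ref{prop:transfer} with \(d=1\). The transfer will be restriction along a section of \(p\), which does the job of a pushforward here. Since \(W\) is nonempty and connected, it has a closed point \(w_0\in W\), and \(\sigma:V\to V\times W\), \(v\mapsto(v,w_0)\), satisfies \(p\circ\sigma=\id_V\). Set \(T=\sigma^*\). This map is additive and bidegree-preserving. It is multiplicative because it is a pullback, and \(\sigma^*p^*=\id\). So \(T(p^*r\cdot x)=\sigma^*p^*r\cdot\sigma^*x=r\cdot T(x)\), and similarly on the right. All three identities in \eqref{eq:transfer-axioms} therefore hold with \(d=1\), and the only input is functoriality of morphic pullback; no projection formula or pushforward is needed.

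Given a nonzero bracket \(\br{a_1,a_2,a_3}_{\mor}=w+I\) from the earlier sections, Proposition~\ref{prop:transfer} then shows that the pulled-back bracket on \(V\times W\) does not contain zero. It also shows that \(p^*w\) has exact order two and zero cycle class. Alternatively, one can argue directly: Proposition~\ref{prop:naturality} for \(\sigma^*\), together with \(\sigma^*p^*=\id\), sends any zero value upstairs to a zero value downstairs. This is the same pullback-and-section argument used for Theorem~\ref{thm:ekedahl-lawson}.

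For the fixed-data families of Theorem~\ref{thm:fixed-data}, I would reuse the detector \(\mathcal D\sigma^*\). The sum indeterminacy \(J'_\Sigma\) on \(V\times W\) is a sum of terms \(p^*a\cdot\mathcal H+\mathcal H\cdot p^*c_j\). Each term maps under \(\sigma^*\) into the corresponding \(J_j\), so \(\sigma^*J'_\Sigma\subseteq J_\Sigma\). Hence any relation \(\sum\epsilon_j p^*\widetilde w_j\in J'_\Sigma+2\mathcal H\) pulls back to a relation downstairs, which is excluded. The singular classes \(\cl(p^*a)\), etc., remain nonzero because \(\sigma^*\) splits \(p^*\) in singular cohomology as well. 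They are independent of \(j\) by naturality.

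The step that needs care is the Lawson-side bookkeeping rather than the morphic argument. Dimension rises by \(\dim W\), so the Lawson degrees shift, and I would note that the classes stay in the ordinary Lawson range because cycle indices only increase. Comparison with singular homology follows from \eqref{eq:compatibility} on \(V\times W\) together with Theorem~\ref{thm:comparison}.
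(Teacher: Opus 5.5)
Your proposal is correct and follows the paper's own argument. The paper likewise takes the section \(v\mapsto(v,w_0)\) and uses \(i^*p^*=\id\) together with naturality, both to exclude zero from a pulled-back bracket and to preserve independence modulo the sum of the pulled-back indeterminacies. Its alternative transfer \(p_*([w_0]\cdot\eta)\) is your \(\sigma^*\) rewritten through the projection formula, so your version, which needs no pushforward, is the same idea stated slightly more directly.
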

\begin{proof}
Choose a point \(w_0\in W(\C)\). The section \(i(v)=(v,w_0)\) satisfies \(i^*p^*=\id\). Naturality of the bracket therefore shows that a pulled-back bracket containing zero would force the original bracket to contain zero. The same retraction proves preservation of linear independence modulo the sums of the pulled-back indeterminacies. Alternatively, with \(r=\dim W\), the operator \(T(\eta)=p_*([w_0]\cdot\eta)\) satisfies \eqref{eq:transfer-axioms} with \(d=1\).
\end{proof}

\begin{proposition}[Transfer of the fixed-data family]\label{prop:transfer-family}
The fixed nonzero singular homology triple and the independent family of Theorem~\ref{thm:fixed-data} persist under any succession of projective bundle pullbacks and odd-degree generically finite morphisms as above. Independence holds modulo the sum of the new full indeterminacies and modulo twice the new target group.
\end{proposition}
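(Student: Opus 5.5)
The proof is an induction on the number of steps. A single step is either a projective bundle pullback $p:\PP(\mathcal E)\to V$ with the transfer $T(\eta)=p_*(\zeta^r\eta)$ and $d=1$ (Corollary~\ref{cor:projective bundle}), or an odd-degree generically finite morphism $f:W\to V$ with $T=f_*$ and odd $d$ (Corollary~\ref{cor:odd-degree}). It therefore suffices to treat one such step $f:W\to V$ with transfer $T$ satisfying \eqref{eq:transfer-axioms} and $d$ odd. We assume that on $V$ we have classes $a,b,c_j^+$ with fixed nonzero singular classes, values $\widetilde w_j$ with $2\widetilde w_j=0$ and $\cl(\widetilde w_j)=0$, and independence of the $\widetilde w_j$ modulo $J_\Sigma+2\mathcal H^{Q,M}(V)$. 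These hypotheses hold on $X$ by Theorem~\ref{thm:fixed-data}.

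The steps, in order:
\begin{enumerate}
\item \emph{Values.} By Proposition~\ref{prop:naturality}, $f^*\widetilde w_j$ is a value of $\br{f^*a,f^*b,f^*c_j^+}_{\mor}$. It is killed by two and has zero cycle class by naturality of realization.
\item \emph{Indeterminacy.} Let $J'_j$ be the full indeterminacy on $W$ and $J'_\Sigma=\sum_j J'_j$. Proposition~\ref{prop:transfer} gives $T(J'_j)\subseteq J_j$ for each $j$. Since $T$ is additive, $T(J'_\Sigma)\subseteq J_\Sigma$.
\item \emph{Independence.} Suppose $\sum_j\epsilon_j f^*\widetilde w_j\in J'_\Sigma+2\mathcal H(W)$ with $\epsilon_j\in\{0,1\}$. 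Applying $T$ and using $Tf^*=d\,\id$ gives $d\sum_j\epsilon_j\widetilde w_j\in J_\Sigma+2\mathcal H(V)$. The quotient $\mathcal H(V)/(J_\Sigma+2\mathcal H(V))$ is an $\F_2$-vector space and $d$ is odd, so multiplication by $d$ is the identity on it. Hence $\sum_j\epsilon_j\widetilde w_j\in J_\Sigma+2\mathcal H(V)$, and all $\epsilon_j=0$.
\item \emph{Exact order.} Independence shows in particular that $f^*\widetilde w_j\neq0$. Combined with $2f^*\widetilde w_j=0$, each value has exact order two, and each pulled-back bracket is nonzero. Iterating over the succession of steps finishes the family statement.
\end{enumerate}

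The remaining point is that the singular homology triple stays fixed, nonzero, and independent of $j$. Independence of $j$ is immediate: $\cl(f^*c_j^+)=f^*\cl(c_j^+)$, and $\cl(c_j^+)$ does not depend on $j$. For nonvanishing, I would use the singular analogue of the transfer. In singular cohomology $f_*f^*=d$, and the analogous bundle identity $p_*(\zeta^r p^*x)=x$ holds. This shows that $f^*$ is injective on classes of exact order two when $d$ is odd, which covers $\cl(a)$ and $\cl(c^+)$. For $\cl(b)$, which has infinite order, $d\,\cl(b)\neq0$ by torsion-freeness of that class, so $f^*\cl(b)\neq0$. The cycle-map compatibility of pushforward needed here is the statement I expect to require the most care. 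The projection formula is compatible with realization and duality, so singular $f_*$ and $p_*(\zeta^r\cdot)$ are also available with the same identities.

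Finally, Theorem~\ref{thm:comparison} applied to each pulled-back bracket sends $D_W(J'_j)$ into the indeterminacy of the fixed singular triple. Hence $I'_\Sigma$ does too, so the comparison map on the common quotient is well defined. Its kernel contains the independent order-two classes $D_W(f^*\widetilde w_j)$.
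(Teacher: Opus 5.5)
Your proof is correct and is essentially the paper's argument: the projection formula sends each new full indeterminacy (hence their sum) into the old one, the odd trace $Tf^*=d$ acts as the identity modulo $J_\Sigma+2\mathcal H$, and the purely topological identities $f_*f^*=d$ and $p_*(\zeta^r p^*x)=x$ keep the order-two and infinite-order singular classes nonzero. No compatibility of $\cl$ with pushforward is actually needed, only its naturality under pullback. The remaining differences are packaging. The paper composes all transfers into one with odd trace and reapplies the detector $\mathcal D$ to obtain $-d\,u\boxtimes z_j$, whereas you induct one step at a time and use the quotient independence of Theorem~\ref{thm:fixed-data} as a black box. For that iteration, carry ``exact order two'' and ``infinite order'' of the singular classes in your inductive hypothesis; the same trace identity preserves both.
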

\begin{proof}
The transfers compose; their scalar trace is the product of the odd degrees and is therefore odd. For each bracket, the new full indeterminacy maps into its original one. Hence the sum maps into \(J_\Sigma\). Composing this transfer with \(\mathcal D\) in \eqref{eq:common-detector} sends the pulled-back specified value to \(-d\,u\boxtimes z_j\). Reduction modulo two gives the same independent family as before. This proves the common-quotient independence claim.

The corresponding singular-(co)homology transfers satisfy the same trace identity. Neither a nonzero class of order two nor a class of infinite order is killed by multiplication by an odd integer. Thus none of the three fixed singular homology classes becomes zero after pullback. The singular-homology images of all selected values remain zero by naturality.
\end{proof}

\begin{remark}[Limits of transfer]
A general proper pushforward is not a map of cochain algebras, and the proof does not treat it as one. Also, the degree-one assertion concerns a morphism \emph{to} a known example. It does not assert birational invariance for arbitrary smooth projective models related only by a rational map. Such an assertion would require a separate descent argument.
\end{remark}

\subsection{Ample canonical examples in every dimension at least eight}
\begin{theorem}[General type applications]\label{thm:general-type}
For every integer \(d\geq8\), there is a smooth connected complex projective \(d\)-fold \(Z_d\) with ample canonical bundle and classes
\[
 y_1,y_2\in L_{d-1}H_{2d-2}(Z_d;\Z),\qquad
 y_{3,j}\in L_{d-3}H_{2d-6}(Z_d;\Z)\quad(j\geq1)
\]
whose adjacent products vanish and whose Lawson brackets do not contain zero. The three singular homology classes are nonzero and do not depend on \(j\). There are specified values
\begin{equation}\label{eq:general-type-values}
 \eta_j\in\br{y_1,y_2,y_{3,j}}_L
       \subset L_{d-5}H_{2d-9}(Z_d;\Z)
\end{equation}
with \(2\eta_j=0\) and \(\Phi(\eta_j)=0\), independent modulo two and modulo the sum of all the full Lawson indeterminacies. In particular they generate a subgroup \(\bigoplus_{j\geq1}\Z/2\) both in the unquotiented kernel of the Lawson cycle map and in the corresponding common quotient kernel.
\end{theorem}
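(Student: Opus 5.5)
The plan is to start from the fixed-data family on the eightfold \(X=S\times T\times B\times\PP^1\) of Theorem~\ref{thm:fixed-data}. First raise the dimension by products, then pass to an odd-degree cyclic cover with ample canonical bundle, and finally transport everything with Proposition~\ref{prop:transfer-family}.

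For \(d\geq8\), put \(X_d=X\times\PP^{d-8}\), with \(X_8=X\). Corollary~\ref{cor:product-stability}, or the projective-bundle case Corollary~\ref{cor:projective bundle} for the trivial bundle, shows that the whole fixed-data family survives on \(X_d\). This covers nonvanishing, independence modulo the sum of the pulled-back full indeterminacies and modulo two, the order-two values with zero cycle class, and the three fixed nonzero singular classes.

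Next choose a very ample line bundle \(L\) on \(X_d\) and a smooth divisor \(D\in|3L|\), which exists by Bertini. Let \(f:Z_d\to X_d\) be the cyclic triple cover branched along \(D\), so that \(Z_d\subset\mathrm{Tot}(L)\) is given by \(s^3=\sigma_D\). It is smooth and connected because \(D\) is smooth and nonempty. Its canonical bundle is
\[
 K_{Z_d}=f^*\bigl(K_{X_d}\otimes L^{\otimes2}\bigr).
\]
Replace \(L\) by a large multiple \(L=M^{\otimes k}\), with \(M\) ample, so that \(K_{X_d}\otimes L^{2}\) is ample. Since \(f\) is finite, \(K_{Z_d}\) is then ample. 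The morphism \(f\) is finite surjective of degree \(3\), which is odd. Corollary~\ref{cor:odd-degree} and Proposition~\ref{prop:transfer-family} therefore transfer the family to \(Z_d\): the values \(f^*\widetilde w_j\) stay independent modulo the sum of the new indeterminacies and twice the target, they have exact order two, and their cycle class is zero. The three singular classes are multiplied by \(3\) under \(f_*f^*\), so they stay nonzero; this is because the first and third have order two and the second has infinite order.

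Finally, apply \(D_{Z_d}\) to the morphic classes \(f^*a\), \(f^*b\), and \(f^*c_j^+\) in bidegrees \((1,2)\), \((1,2)\), \((3,6)\). This produces \(y_1,y_2\in L_{d-1}H_{2d-2}\) and \(y_{3,j}\in L_{d-3}H_{2d-6}\). The values \(\eta_j=D_{Z_d}(f^*\widetilde w_j)\) have morphic bidegree \((5,9)\) and land in \(L_{d-5}H_{2d-9}\), matching \eqref{eq:general-type-values}. Naturality, Proposition~\ref{prop:naturality}, ensures that \(f^*\widetilde w_j\) is a value of the pulled-back bracket. Theorem~\ref{thm:comparison} together with \eqref{eq:compatibility} gives the kernel statements, both unquotiented and in the common quotient. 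The common quotient comparison remains well defined because each new indeterminacy maps into the fixed singular indeterminacy.

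The main obstacle will be confirming that the transfer axioms hold for \(f_*\) and for the product projection in every bidegree that occurs, including full indeterminacy groups with small weight such as \(L^2H^3\). This matters especially when the composite transfer is the detector \(\mathcal D\circ p_*\circ f_*\), which must still annihilate the sum of all new indeterminacies. I would check this through Proposition~\ref{prop:transfer}'s bimodule projection formula applied factor by factor, using \(T(I')\subseteq I\) for each \(j\) and then summing.
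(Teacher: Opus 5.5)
Your proposal is correct and follows essentially the same route as the paper. You pass to \(X\times\PP^{d-8}\) by the trivial projective-bundle or product transfer, take the cyclic triple cover branched along a smooth divisor in \(|L^{\otimes3}|\) with \(K\otimes L^{\otimes2}\) ample, transport the fixed-data family by Proposition~\ref{prop:transfer-family} because \(3\) is odd, and read off the Lawson degrees by duality in dimension \(d\); the small differences (the section-retraction option in the first step, and connectedness via total ramification over the nonempty \(D\)) do not change the argument.
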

\begin{proof}
Start with \(V_d=X\times\PP^{d-8}\). Corollary~\ref{cor:projective bundle} and Proposition~\ref{prop:transfer-family} transfer the entire fixed-data family to \(V_d\). Choose a sufficiently positive line bundle \(L\) on \(V_d\) so that \(L^{\otimes3}\) is very ample and \(K_{V_d}\otimes L^{\otimes2}\) is ample. A general section \(s\) of \(L^{\otimes3}\) has a nonempty smooth reduced zero divisor \(D\).

Take the cyclic triple cover
\begin{equation}\label{eq:cyclic-cover}
 f:Z_d=\Spec_{V_d}
       \bigl(\mathcal O_{V_d}\oplus L^{-1}\oplus L^{-2}\bigr)
       \longrightarrow V_d,
\end{equation}
with multiplication determined by \(s\). This is the standard cyclic-cover construction; see \cite[Section~3.5]{EV}. It is finite flat of degree three. Locally along \(D\) its equation is \(t^3=x_1\), where \(x_1\) is a smooth local equation for the divisor; hence \(Z_d\) is smooth there. Away from \(D\) the cover is \mbox{\'etale}. The section has valuation one along each component of \(D\), so its generic rational expression is not a cube. The cubic polynomial is irreducible, and the cover is connected. Finiteness over a projective variety makes \(Z_d\) projective.

Let \(R\) be its reduced ramification divisor. Then \(f^*D=3R\) and \(\mathcal O_{Z_d}(R)=f^*L\). The ramification formula gives
\begin{equation}\label{eq:canonical-cover}
 K_{Z_d}\cong f^*K_{V_d}\otimes\mathcal O_{Z_d}(2R)
             \cong f^*(K_{V_d}\otimes L^{\otimes2});
\end{equation}
compare \cite[Section~3.16]{EV}. A finite pullback of an ample line bundle is ample, proving the canonical-bundle assertion.

Degree three is odd. Proposition~\ref{prop:transfer-family} therefore preserves the full fixed-data family and its independence. The bidegrees of the three morphic classes are still \((q,m)=(1,2),(1,2),(3,6)\), and the value bidegree is \((5,9)\). Duality in dimension \(d\) sends them to precisely the Lawson groups displayed in the statement. All these groups are in the ordinary Lawson range. For completeness, the individual Lawson indeterminacy is
\[
 y_1\bullet L_{d-4}H_{2d-7}(Z_d)
 +L_{d-2}H_{2d-3}(Z_d)\bullet y_{3,j}.
\]
This is the dual of the full morphic indeterminacy, so no additional quotient terms have been omitted.
\end{proof}

\section{Motivic Massey values and higher Chow torsion}\label{sec:higher-chow}
The cycle presentation in Section~\ref{sec:eightfold} has consequences before passing to semi-topological cycles. We use Bloch's higher Chow groups \cite{Bloch}. Write
\[
 H^m_M(V,\Z(q))=\CH^q(V,2q-m)
\]
in the range used here. The motivic cycle complexes, or equivalently the corresponding weight-graded motivic mapping spectra, have a multiplicative comparison to the morphic models of Section~\ref{sec:models}; see \cite[Theorem~4.11 and Section~5.2]{Heller}. We use this comparison, not an identification of motivic and morphic groups with integral coefficients.

For clarity, the associative integral model on the motivic side is justified in the same way as Lemma~\ref{lem:linear-model}. The weight-zero coefficient mapping spectrum for \(\MZ\) has homotopy groups \(H_M^m(\Spec\C,\Z(0))\), which are \(\Z\) for \(m=0\) and zero otherwise. It is therefore equivalent to \(H\Z\), and its weight-graded mapping spectra are \(H\Z\)-linear. Applying the associative comparison of \cite{Shipley} supplies, for each fixed \(V\), a model \(\mathcal M_V\) with \(H^m(\mathcal M_V(q))=H_M^m(V,\Z(q))\). Naturality of exterior products is kept at the multiplicative-spectrum level, exactly as in Section~
ef{sec:models}. The motivic bracket means the Toda bracket of this fixed multiplicative theory, or its cochain Massey representative after choosing such a model. No arbitrary internal intersection of unmoved higher Chow chains is used.

\subsection{Exact order two in higher Chow groups}
Use the divisors \(D_S,D_T\) and admissible cubical degree-one chains \(F_S,F_T\) from \eqref{eq:chow-chain}. Thus
\[
 \partial F_S=2D_S,\qquad \partial F_T=2D_T.
\]
For the independent homologically trivial cycles \(\gamma_j\), choose cycle representatives \(\Gamma_j\). Let \(P\) be a point of \(\PP^1\), and set
\begin{equation}\label{eq:higher-chow-family}
 \Theta_j=(D_S\boxtimes F_T-F_S\boxtimes D_T)
          \boxtimes\Gamma_j\boxtimes[P]\in z^5(X,1).
\end{equation}

\begin{theorem}[Scalar-indecomposable higher Chow torsion]\label{thm:higher-chow}
The cycles \(\Theta_j\) define classes of exact order two in \(\CH^5(X,1)\). Their reductions are linearly independent modulo two, and their images under the composite
\[
 \CH^5(X,1)\longrightarrow L^5H^9(X)
        \xrightarrow{\cl}H^9(X;\Z)
\]
are zero. If \(\omega_j\in L^5H^9(X)\) denotes the morphic image of \([\Theta_j]\), then \(\omega_j\) is a value of \(\br{a,b,c_j}_{\mor}\), so \(\omega_j-\widetilde w_j\in J_j\).

Define the scalar-decomposable subgroup by
\begin{equation}\label{eq:scalar-decomposables}
 D^5(X)=\im\bigl(\CH^4(X)\otimes_{\Z}\C^\times
                   \longrightarrow\CH^5(X,1)\bigr).
\end{equation}
The classes \([\Theta_j]\) remain independent modulo \(D^5(X)+2\CH^5(X,1)\). In particular, their images generate \(\bigoplus_{j\geq1}\Z/2\) in the scalar-indecomposable quotient \(\CH^5(X,1)/D^5(X)\).
\end{theorem}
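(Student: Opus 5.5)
The plan is to transport everything to the morphic side and then reuse the detector \(\mathcal D=\mathcal R\pi_*\) of Theorem~\ref{thm:fixed-data}. The one genuinely new step is to show that \(\mathcal D\) kills \(D^5(X)\) after reduction modulo two.

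\emph{Cycle facts.} The boundary of \(\Theta_j\) cancels because \(\partial F_S=2D_S\) and \(\partial F_T=2D_T\), so \([\Theta_j]\in\CH^5(X,1)\) is defined. For the order, put \(\Lambda_j=F_S\boxtimes F_T\boxtimes\Gamma_j\boxtimes[P]\in z^5(X,2)\). Its cubical boundary is \(\pm 2\Theta_j\), by the same sign bookkeeping as \eqref{eq:order-two}. Hence \(2[\Theta_j]=0\).

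\emph{Morphic value and realization.} By the naturality paragraph following \eqref{eq:chow-chain}, the monoidal transformation \(\id\to Q^{\sst}\) sends the factorwise motivic defining system to a defining system of \(\br{a,b,c_j}_{\mor}\). Lemma~\ref{lem:indeterminacy} then gives \(\omega_j-\widetilde w_j\in J_j\). The realization \(\cl(\omega_j)\) factors through \(\cl(\gamma_j)=0\), exactly as in \eqref{eq:factor-zero}, so it vanishes. This leaves exact order two and independence.

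\emph{Independence.} Apply the morphic map to the hypothetical relation \(\sum\epsilon_j[\Theta_j]\in D^5(X)+2\CH^5(X,1)\). Then apply \(\mathcal D\) and reduce modulo two. On the left, \(\mathcal D(J_\Sigma)=0\) by \eqref{eq:common-detector}, so we obtain \(-\sum\epsilon_j\,\overline{u\boxtimes z_j}\). These classes are independent by Theorems~\ref{thm:schreieder} and~\ref{thm:AZ}. The terms coming from \(2\CH^5(X,1)\) vanish mod two. It remains to show that \(\mathcal D\) kills the morphic image of \(D^5(X)\) modulo two; this is the step I expect to be the main obstacle.

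\emph{Killing the decomposables.} My first approach is to observe that \(\C^\times=\CH^1(\Spec\C,1)\) is divisible. For \(\lambda\in\C^\times\), write \(\lambda=\mu^2\). Then \(\zeta\otimes\lambda=2(\zeta\otimes\mu)\), so \(D^5(X)\subseteq 2\CH^5(X,1)\). If this holds, the term is absorbed and nothing morphic is needed. I would check that the product \(\CH^4(X)\otimes\CH^1(\Spec\C,1)\to\CH^5(X,1)\) is bilinear, which it is, so the divisibility argument is valid. As a consistency check, the morphic image of \(\C^\times\) is \(L^1H^1(\Spec\C)=0\), so these classes also vanish in \(L^5H^9(X)\).

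Once the relation is reduced modulo two, every \(\epsilon_j\) vanishes. Exact order two follows as well, because \([\Theta_j]\) is nonzero modulo \(2\CH^5(X,1)\). The subgroup generated in \(\CH^5(X,1)/D^5(X)\) is therefore \(\bigoplus_{j\ge1}\Z/2\).
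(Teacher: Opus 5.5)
Your proposal is correct, and apart from one step it follows the paper's proof. The shared steps are the boundary computation giving $2[\Theta_j]=0$, the naturality argument giving $\omega_j-\widetilde w_j\in J_j$, the vanishing realization through the factor $\gamma_j$, and detection by $\mathcal D=\mathcal R\pi_*$ modulo two.

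The difference is how you dispose of $D^5(X)$.

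\emph{Paper's route.} The paper shows that the morphic comparison kills $D^5(X)$ outright. The group $\CH^1(X,1)=\C^\times$ is pulled back from $\Spec\C$, and its morphic image lies in $L^1H^1(\Spec\C)=0$. Multiplicativity of the motivic-to-morphic comparison then kills every product of these classes with $\CH^4(X)$.

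\emph{Your route.} You observe that $\CH^4(X)\otimes_\Z\C^\times$ is divisible, because $\C^\times$ is. Its image $D^5(X)$ is therefore a divisible subgroup, so $D^5(X)=2D^5(X)\subseteq 2\CH^5(X,1)$.

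\emph{Comparison.} Your argument is more elementary. It needs neither multiplicativity of the comparison on scalar classes nor the computation of morphic cohomology of a point. It also makes explicit that $D^5(X)+2\CH^5(X,1)=2\CH^5(X,1)$. Consequently the scalar-indecomposability assertion is a formal consequence of independence modulo two, and the same holds modulo any integer. The paper's argument gives the stronger integral fact that $D^5(X)$ maps to zero in $L^5H^9(X)$ before any reduction. That version would still apply to a detector not taken modulo an integer.
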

\begin{proof}
Closedness follows from the two boundary identities. There is also the explicit degree-two boundary calculation
\begin{equation}\label{eq:chow-order-two}
 \partial\bigl(F_S\boxtimes F_T\boxtimes\Gamma_j\boxtimes[P]\bigr)
       =2\Theta_j.
\end{equation}
The minus sign comes from the cubical degree one of \(F_S\). Products are on different variety factors, and products of admissible cubical chains are admissible. Thus \(2[\Theta_j]=0\).

By multiplicative naturality, the motivic-to-morphic comparison sends \([\Theta_j]\) to a value \(\omega_j\) of the morphic bracket \(\br{a,b,c_j}_{\mor}=\widetilde w_j+J_j\). The common detector \(\mathcal D=\mathcal R\pi_*\) kills every \(J_j\), hence
\[
 \mathcal D(\omega_j)=\mathcal D(\widetilde w_j)=-u\boxtimes z_j.
\]
The reductions of the right-hand side are linearly independent by Theorem~\ref{thm:AZ}; therefore the \(\omega_j\), and hence the \([\Theta_j]\), are independent modulo two. Since \(2[\Theta_j]=0\), each has exact order two. The singular realization of the displayed factorwise defining cycle contains the homologically trivial factor \(\Gamma_j\), so its singular cohomology image is zero.

For a connected smooth projective complex variety, \(\CH^1(X,1)=\mathcal O(X)^\times=\C^\times\); these classes are pulled back from \(\Spec\C\). Their morphic images vanish, since \(L^1H^1(\Spec\C)=0\). Multiplicativity therefore kills the entire subgroup \(D^5(X)\). If a finite \(\F_2\)-linear combination of the \([\Theta_j]\) belonged to \(D^5(X)+2\CH^5(X,1)\), its morphic image would be zero modulo two, contrary to the detector independence of the morphic images \(\omega_j\). This proves the last assertion.
\end{proof}

\begin{remark}[Meaning of indecomposable and relation to earlier results]\label{rem:chow-priority}
Here ``indecomposable'' refers exactly to the scalar-product subgroup \eqref{eq:scalar-decomposables}, not to every possible exterior product with cycles on other varieties. The cycles \(\Theta_j\) visibly admit exterior-product expressions. Infinite torsion in higher Chow groups modulo integers, and its separation from scalar-decomposable classes, are already studied in \cite{AZ}; our assertion is not a first existence theorem for such group-theoretic phenomena, nor a dimension-optimal one. The additional feature is that these particular cycles are specified Massey values surviving all the morphic indeterminacies, including the common quotient of Theorem~\ref{thm:fixed-data}. No vanishing of a Deligne regulator or an Abel--Jacobi invariant is asserted here.
\end{remark}

\subsection{Motivic brackets with the same nonzero singular cohomology data}
Let \(\widehat a\in H^2_M(X,\Z(1))\) be the class of \(D_S\), let \(\widehat b=2h\), and let
\[
 \widehat c_j=[D_T]\boxtimes\gamma_j\in H^6_M(X,\Z(3)),
 \qquad
 \widehat c_j^+=\widehat c_j+\widehat a h\widehat\vartheta.
\]
Here \(\widehat\vartheta\) denotes the polarization divisor class in motivic cohomology, with all pullbacks implicit. We have \(2\widehat a=2\widehat c_j=0\) already in the relevant Chow groups, because the two line bundles are of order two.

\begin{corollary}[Nonzero motivic secondary operations]\label{cor:motivic-brackets}
The integral motivic bracket
\begin{equation}\label{eq:motivic-bracket}
 \br{\widehat a,\widehat b,\widehat c_j^+}_M
       \subset H^9_M(X,\Z(5))=\CH^5(X,1)
\end{equation}
is defined and does not contain zero. It has \([\Theta_j]\) as a specified value. Its three singular cohomology classes are the same nonzero classes as in Proposition~\ref{prop:perturbation}, whereas that specified value has zero singular cohomology image.
\end{corollary}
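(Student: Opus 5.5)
The plan is to build a motivic defining system and then push it into the morphic theory, so that the nonvanishing proved there pulls back. The bracket is defined because the adjacent products vanish already at the level of motivic cohomology. Indeed \(2\widehat a=0\) and \(2\widehat c_j=0\) in the Chow groups, and \(2\widehat a h\widehat\vartheta=0\) as well. Hence \(\widehat a\widehat b=2h\widehat a=0\) and \(\widehat b\widehat c_j^+=0\).

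For the defining system I would copy the perturbed cochain system \eqref{eq:perturbed-system} in the motivic associative model \(\mathcal M_X\), working factorwise with exterior products as in Section~\ref{sec:higher-chow}. The null-homotopy of \(2\widehat a\) comes from \(F_S\), and that of \(2[D_T]\) from \(F_T\). The third class is \(C^+=Z+AHP_\vartheta\), where \(Z\) is represented by \(D_T\boxtimes\Gamma_j\). Then I set \(U_{12}=F_SH\) and \(V_{23}=HF_T\Gamma_j+F_SH^2P_\vartheta\). The glued value is \(H\,\Theta'_j+(AU-UA)H^2P_\vartheta\), with \(\Theta'_j=D_S\boxtimes F_T\boxtimes\Gamma_j-F_S\boxtimes D_T\boxtimes\Gamma_j\). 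The first term is the class \([\Theta_j]\), since a point of \(\PP^1\) represents \(h\) in \(\CH^1(\PP^1)\). For the second term, the commutator \(AU-UA\) is closed, and it is multiplied by a representative of \(h^2=0\) in \(H_M^4(\PP^1,\Z(2))=\CH^2(\PP^1)=0\). So that term vanishes in cohomology without assuming any strict commutativity, exactly as in the proof of Proposition~\ref{prop:perturbation}. This exhibits \([\Theta_j]\) as a specified value.

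For nonvanishing, I apply the multiplicative motivic-to-morphic comparison \cite[Theorem~4.11]{Heller}, using the naturality of Proposition~\ref{prop:naturality} for that map of models. The comparison sends \(\widehat a,\widehat b,\widehat c_j^+\) to \(a,b,c_j^+\) and sends the motivic bracket into \(\br{a,b,c_j^+}_{\mor}=\widetilde w_j+J_j\). Suppose \(0\) were in the motivic bracket. Then \(0\) would lie in the morphic bracket, contradicting Theorem~\ref{thm:fixed-data} (or Proposition~\ref{prop:perturbation} applied to each \(j\)).

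It remains to treat the singular classes. Under realization the three motivic classes go to \(\cl(a),\cl(b),\cl(c_j^+)\), which are nonzero by \eqref{eq:nonzero-singular-triple}. The specified value has zero singular image by Theorem~\ref{thm:higher-chow}.

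The main obstacle is making the perturbed defining system legitimate on the motivic side. Products of \(F_S\) with \(D_S\) on the same factor \(S\), as in \(AU\), are internal products, and admissibility of such products of unmoved chains is not automatic. I would avoid chain-level intersections by interpreting \(A,U\) in the fixed associative model \(\mathcal M_S\), where the products exist by Shipley's comparison. I would then map exterior products into \(\mathcal M_X\) at the spectrum level, and identify the first term with \([\Theta_j]\) through the factorwise exterior-product compatibility with cycle-level chains.
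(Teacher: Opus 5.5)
Your proposal is correct and follows essentially the same route as the paper. Both use the defining system built from \(F_S\) and \(F_T\boxtimes\Gamma_j\) with the \(\PP^1\)-factor, and the perturbed system of \eqref{eq:perturbed-system}, whose error term is a closed commutator times a representative of \(h^2=0\) in \(\CH^2(\PP^1)\). Both prove nonvanishing by sending a hypothetical zero value through the multiplicative motivic-to-morphic comparison into \(\br{a,b,c_j^+}_{\mor}=\widetilde w_j+J_j\). Your explicit check of the adjacent products, and your citation of Theorem~\ref{thm:fixed-data} for the \(j\)-indexed morphic nonvanishing, are small refinements of the paper's terser argument, which cites Proposition~\ref{prop:perturbation} directly.
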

\begin{proof}
Use the null-homotopies given by \(F_S\) and \(F_T\boxtimes\Gamma_j\). With the extra \(\PP^1\)-factor, they give the value \([\Theta_j]\) for the unmodified third class. The calculation \eqref{eq:perturbed-system}--\eqref{eq:perturbed-value} applies in an associative motivic model as well: the error term is a closed commutator multiplied by \(h^2\), and \(h^2=0\) in \(\CH^2(\PP^1)\). Thus the modified bracket also contains the specified value \([\Theta_j]\).

If zero lay in the motivic bracket, multiplicative naturality would send it to zero in \(\br{a,b,c_j^+}_{\mor}\), contradicting Proposition~\ref{prop:perturbation}. This checks nonvanishing modulo the \emph{full} motivic indeterminacy: its image lies in the full morphic indeterminacy, so none of its elements can cancel the given value. The assertions about singular cohomology classes follow from the same comparison.
\end{proof}

\section{Integral and two-local nonformality}\label{sec:nonformality}
A nonzero Massey quotient is an obstruction to replacing a multiplicative complex by its cohomology algebra with zero differential. This is a standard principle of secondary operations. We record carefully the coefficient ring to which it applies in the present examples.

\begin{definition}
A weight-graded associative differential graded algebra \(A\) over a commutative ring \(R\) is \emph{formal over \(R\)} if there is a zigzag of weight-preserving quasi-isomorphisms of associative differential graded \(R\)-algebras between \(A\) and \(H^*(A)\) equipped with zero differential.
\end{definition}

\begin{lemma}[The Massey obstruction]\label{lem:formality-obstruction}
A formal differential graded algebra has zero in every defined triple Massey bracket. If an integral triple bracket has a nonzero quotient class of exact order two, it remains nonzero after flat extension to \(\Z_{(2)}\).
\end{lemma}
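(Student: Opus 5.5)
The first assertion follows from the invariance of Massey brackets under quasi-isomorphisms of associative differential graded algebras. The second follows from the identification of localization with flat base change, plus the elementary fact that $\Z_{(2)}$-localization keeps $2$-primary torsion.

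\emph{Formal case.} Let $\varphi:A\to A'$ be a quasi-isomorphism in the zigzag. The naturality argument of Proposition~\ref{prop:naturality} gives
\[
 \varphi_*\br{a_1,a_2,a_3}\subseteq\br{\varphi_*a_1,\varphi_*a_2,\varphi_*a_3}.
\]
Each side is a single coset of its full indeterminacy (Lemma~\ref{lem:indeterminacy}), and $\varphi_*$ is a ring isomorphism on cohomology. So $\varphi_*$ carries the indeterminacy of the source bracket onto that of the target bracket, and the inclusion of cosets is an equality. The bracket, as a subset, is therefore transported bijectively along each arrow of the zigzag, in either direction. It remains to treat $H^*(A)$ with zero differential. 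There we take the representatives $A_i=a_i$ themselves, and $A_1A_2=0$, $A_2A_3=0$ hold on the nose, so $U=V=0$ are admissible null-homotopies. Formula \eqref{eq:cochain-bracket} then gives the value $W=0$. Transporting back along the zigzag puts $0$ in the original bracket.

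\emph{Flat extension.} $\Z_{(2)}$ is flat over $\Z$, so $H^*(A\otimes\Z_{(2)})=H^*(A)\otimes\Z_{(2)}$, with the same product. Defining systems tensor along, so the localized bracket contains $w\otimes1$. Its indeterminacy is
\[
 a_1H^*\otimes\Z_{(2)}+H^*a_3\otimes\Z_{(2)}=I\otimes\Z_{(2)}.
\]
Hence the localized quotient class is $[w]\otimes1$ in $(H/I)\otimes\Z_{(2)}$, again by exactness of localization. A class of exact order two in an abelian group survives localization at $2$: if $[w]/1=0$, some odd $s$ kills $[w]$, and together with $2[w]=0$ this gives $\gcd(s,2)[w]=[w]=0$, a contradiction. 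So the localized bracket does not contain zero.

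\emph{Main point to check.} The step needing the most care is the equality of cosets under a quasi-isomorphism. One must check that the image of the full indeterminacy is exactly the full indeterminacy of the target. This uses surjectivity of $\varphi_*$ on cohomology in all bidegrees, not only in the degrees of the $a_i$. The weight grading is preserved throughout, so bidegrees match.
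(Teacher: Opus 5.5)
Your proposal is correct and follows the paper's proof essentially step by step. In both, quasi-isomorphisms carry a defined bracket onto the corresponding bracket, with indeterminacy onto indeterminacy, so whether zero belongs to the bracket is preserved along the zigzag; the zero-differential algebra gives the value zero with $U=V=0$; and flat localization commutes with cohomology and with the two indeterminacy summands, so an order-two quotient class survives because no odd integer kills it. Your explicit gcd argument, and your remark that $\varphi_*$ must be surjective in all bidegrees, just spell out points the paper states in one line each.
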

\begin{proof}
A multiplicative quasi-isomorphism maps a nonempty triple bracket into the corresponding triple bracket and maps its indeterminacy onto the full target indeterminacy. Both brackets are cosets, so the inclusion is equality under the cohomology isomorphism. Thus a zigzag of quasi-isomorphisms preserves whether zero belongs to a bracket. In a zero-differential algebra, the adjacent products of a defined triple are literally zero, and choosing both null-homotopies to be zero produces the value zero.

Flat localization commutes with cohomology and with the images of multiplication that define the two indeterminacy summands. Consequently, the localized quotient is the localization of the integral quotient, and the chosen defining system maps to a defining system there. A nonzero element of order two cannot be killed by an odd denominator. Therefore its image in the \(\Z_{(2)}\)-localized quotient remains nonzero.
\end{proof}

\begin{theorem}[Nonformality of the cycle-cohomology algebras]\label{thm:nonformality}
The weight-graded associative morphic cochain algebra \(\cC_X\) is not formal over \(\Z\), and its derived extension to \(\Z_{(2)}\) is not formal over \(\Z_{(2)}\). The same statements hold for the integral motivic cochain algebra of \(X\). The morphic nonformality statements also hold for every variety obtained in Section~\ref{sec:transfer}, including the \(Z_d\) with ample canonical bundle.
\end{theorem}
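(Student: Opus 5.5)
The plan is to combine Lemma~\ref{lem:formality-obstruction} with the nonvanishing results already proved. For the morphic algebra over \(\Z\), I start from Proposition~\ref{prop:perturbation}, or equally from \eqref{eq:eightfold-nonzero}. Inside \(\cC_X\) the triple \((a,b,c)\) has a defined bracket \(\br{a,b,c}_{\mor}=\widetilde w+J\), and this bracket does not contain zero. By Lemma~\ref{lem:linear-model}, the brackets computed in \(\cC_X\) are exactly the morphic Toda brackets. If \(\cC_X\) were formal over \(\Z\), the first half of Lemma~\ref{lem:formality-obstruction} would force zero to lie in every defined triple bracket. That is a contradiction.

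For \(\Z_{(2)}\) I use the second half of the lemma. The quotient class \([\widetilde w]\in L^5H^9(X)/J\) is nonzero and killed by two, so it has exact order two. Since \(\Z_{(2)}\) is flat over \(\Z\), the derived extension \(\cC_X\otimes\Z_{(2)}\) has cohomology \(L^*H^*(X)\otimes\Z_{(2)}\). Its bracket quotient is the localization of the integral quotient. The class therefore stays nonzero, and nonformality over \(\Z_{(2)}\) follows from the first half of the lemma again. I must also check that the localized defining system gives a defined triple, i.e.\ that the adjacent products still vanish. This holds because the products vanish integrally.

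For the motivic algebra \(\mathcal M_X\) of Section~\ref{sec:higher-chow}, I use Corollary~\ref{cor:motivic-brackets}. It gives a defined integral bracket \(\br{\widehat a,\widehat b,\widehat c_j^+}_M\) that does not contain zero. I first check that its quotient class has exact order two. The value \([\Theta_j]\) satisfies \(2[\Theta_j]=0\) by \eqref{eq:chow-order-two}, and the class is nonzero. The same two arguments then apply, using flat localization of \(\mathcal M_X\).

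For the varieties of Section~\ref{sec:transfer}, Proposition~\ref{prop:transfer-family} and Theorem~\ref{thm:general-type} supply pulled-back brackets that do not contain zero. Each has a value \(f^*\widetilde w\) whose quotient class is nonzero. That class has exact order two, by Proposition~\ref{prop:transfer} with odd trace degree. The lemma then gives nonformality over \(\Z\) and over \(\Z_{(2)}\).

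The main obstacle is justifying that the bracket in the DGA model agrees with the intrinsic Toda bracket, and that a zigzag of quasi-isomorphisms preserves zero-membership in a bracket. Lemma~\ref{lem:linear-model} and the proof of Lemma~\ref{lem:formality-obstruction} handle this. For the localized statement I will verify one point directly: by right exactness of \(\otimes\Z_{(2)}\), the indeterminacy subgroup localizes to the indeterminacy subgroup.
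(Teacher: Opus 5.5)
Your proposal is correct and follows the paper's proof. Both apply Lemma~\ref{lem:formality-obstruction} to the nonzero, exact-order-two quotient classes supplied by Proposition~\ref{prop:perturbation} with \eqref{eq:eightfold-nonzero} (morphic), Corollary~\ref{cor:motivic-brackets} with \eqref{eq:chow-order-two} (motivic), and Proposition~\ref{prop:transfer} (transferred varieties); your extra checks on flat localization of the indeterminacy spell out what the lemma's proof already asserts.
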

\begin{proof}
For \(\cC_X\), the quotient class of \(\widetilde w\) is nonzero and has exact order two by Proposition~\ref{prop:perturbation} together with \eqref{eq:eightfold-nonzero}. Apply Lemma~\ref{lem:formality-obstruction}. For motivic cohomology, Corollary~\ref{cor:motivic-brackets} gives a nonzero Massey quotient represented by \([\Theta_j]\), and \eqref{eq:chow-order-two} shows that this quotient class has exact order two. The same lemma applies. Every transferred morphic example has a selected value and quotient class of exact order two by Proposition~\ref{prop:transfer}, so it has the same formality obstruction.
\end{proof}

\begin{remark}[What this does not imply]\label{rem:no-rational-formality}
These statements concern integral and two-local multiplicative cycle models. They do not imply rational nonformality: rationalization kills the torsion classes used here. Nor does reduction modulo two automatically produce a nonzero triple bracket; the middle class \(2h\) becomes zero, so the reduced triple contains zero. Finally, we do not assert formality of integral singular cochains. The singular cohomology assertion is that one specified ordinary Massey quotient vanishes, not that all integral topological secondary operations vanish.
\end{remark}

\section{Scope, dependencies, and remaining questions}\label{sec:scope}
Theorem~\ref{thm:main} establishes a nonzero integral Lawson bracket invisible to singular homology, with a selected value in the actual kernel of the Lawson cycle map. Theorem~\ref{thm:nonzero-classes} removes the zero-singular-homology-class restriction of that initial construction, and Theorem~\ref{thm:fixed-data} supplies infinitely many secondary classes over one fixed nonzero singular homology triple. In particular, the stronger requirement that all three singular-homology classes be nonzero is achieved by the perturbation \eqref{eq:modified-third}; thus the initial zero-class feature is not intrinsic to the construction.

The proof of the original torsion example still depends on the representation and duality theorems, Totaro's nondivisibility result, Schreieder's Enriques exterior-product theorem, and the elliptic exterior-product theorem of Alexandrou--Zhou. The last ingredient is cited in its available preprint form \cite{AZ}. None of the applications replaces that ingredient by an assumed Lawson K\"unneth formula. The added transfer argument uses established projection and projective bundle formulas \cite{FG,HuLi}; product stability is a direct retraction argument; the general-type realization uses the classical cyclic-cover construction \cite{EV}.

The contribution of the secondary calculation is the control of the full indeterminacy, its invariance under the third-class perturbation, and the common-quotient detector. The transfer and nonformality principles are familiar structural arguments applied here to those values. The higher Chow infinitude result is not asserted as a first or dimension-optimal existence result; its additional feature is its explicit realization by the same nonzero motivic and Lawson Massey values, as explained in Remark~\ref{rem:chow-priority}. No exhaustive priority claim for these applications is made.

Several qualifications remain essential. The exhibited values and the first and third singular homology classes have order two. All three singular homology classes are nonzero, but they are not all nontorsion. The chosen ordinary singular homology Massey bracket has zero quotient class; no claim is made that every integral singular Massey product on these varieties vanishes. The factors are selected by very-general existence theorems and an equivariant complete-intersection construction, not by numerical equations for a single explicit abelian threefold and cycle. No minimality claim for the dimension is made.

\begin{question}[The rational problem]\label{q:rational}
Does a smooth complex projective variety admit a defined triple Lawson--Massey product with rational coefficients whose quotient class is nonzero?
\end{question}
By Corollary~\ref{cor:rational-formality}, any such bracket would map to zero in the rational singular-homology Massey quotient. Rationalization kills the torsion classes of the present construction, so the new applications do not answer this question.

\begin{question}[Nontorsion singular homology classes]\label{q:nontorsion}
Can an integral nonzero Lawson bracket invisible to singular homology be constructed with all three singular homology classes of infinite order?
\end{question}
Theorem~\ref{thm:nonzero-classes} only requires the three integral singular homology classes to be nonzero. The stronger nontorsion requirement is not established here. Neither projective bundle transfer nor odd-degree pullback changes the torsion nature of the two relevant classes.

\appendix
\section{A compact verification of the torsion-linking system}\label{app:checks}
The following records the calculations on which the new example depends, separately from the existence of the cycle classes used in the construction.

On \(X_0=S\times Y\), the bidegrees of the three classes \((q,m)\) are \((1,2),(0,0),(3,6)\). Therefore the output bidegree is \((4,7)\), and the two null-homotopy bidegrees are \((1,1),(3,5)\). With \(dU=2A\), \(dV_Y=2Z\), one has
\[
 \begin{aligned}
 d(A\boxtimes V_Y-U\boxtimes Z)&=2A\boxtimes Z-2A\boxtimes Z=0,\\
 d(U\boxtimes V_Y)&=2(A\boxtimes V_Y-U\boxtimes Z).
 \end{aligned}
\]
The same sign convention yields
\[
 f^*U=K+2D,\quad dD=f^*A
 \quad\Longrightarrow\quad
 j^*W=d(D\boxtimes V_Y)-K\boxtimes Z.
\]
The detector kills the two full subgroups
\[
 t_SL^3H^5(X_0),\qquad L^1H^1(X_0)z,
\]
for different reasons: the torsion divisor restricts to zero, while all degree-one classes on \(X_0\) are pulled back from \(Y\). No decomposition of \(L^3H^5(X_0)\) is used.

After adding \(\PP^1\), the bidegrees of the three classes are \((1,2),(1,2),(3,6)\), the output is \((5,9)\), and the indeterminacy factors have bidegrees \((4,7),(2,3)\). Integration along \(\PP^1\) subtracts \((1,2)\) and sends these factors to \((3,5),(1,1)\), exactly the sevenfold indeterminacy factors. Duality in dimension eight sends the output to \((p,k)=(3,7)\), and the bidegrees of the three classes to \((7,14),(7,14),(5,10)\).

The logical chain is thus
\[
 \overline{\mathcal R(w)}\neq0,\quad \mathcal R(I_0)=0
 \ \Longrightarrow\ w\notin I_0
 \ \Longrightarrow\ \widetilde w\notin J
 \ \Longrightarrow\ \xi\notin I_L.
\]
The independent factorization \(W=W_0\boxtimes G\), with \(\cl(\gamma)=0\), proves vanishing of the selected singular cohomology value. This is stronger than, and distinct from, the elementary fact that a singular bracket with a zero third class contains zero.

\section{Checks for the perturbation and transfer}\label{app:application-checks}
For the perturbation, \(A,U,H,P_\vartheta\) have bidegrees \((1,2),(1,1),(1,2),(1,2)\). Thus \(AHP_\vartheta\) has bidegree \((3,6)\), while \((AU-UA)H^2P_\vartheta\) has bidegree \((5,9)\), as required. Only its cohomology class is set to zero: the equality used is \(h^2=0\), not strict commutativity of the integral cochains. For \(v\in L^2H^3\), the product \(vh\vartheta\) lies in \(L^4H^7\). This checks both the forward and reverse inclusions in \(J^+=J\).

For the fixed-data family, the same additive map \(\mathcal R\pi_*\) kills \(J_j\) for every \(j\), and hence kills the subgroup generated by them all. Reducing its values modulo two proves independence even after adding twice the entire target group. Thus the argument does not mistake different Massey quotients for one pre-existing common quotient.

For a transfer \(T\), the implication is
\[
 f^*w\in I'\ \Longrightarrow\ dw\in I.
\]
It uses the projection formula on the two complete indeterminacy summands and never applies a multiplicative-naturality statement to \(T\). In the projective bundle case, multiplication by \(\zeta^r\) raises bidegree by \((r,2r)\), and pushforward subtracts the same amount. For the cyclic triple cover, \(d=3\) is prime to the order of the quotient class. The dimension-dependent duality formula is always \((q,m)\mapsto(d-q,2d-m)\).

Finally, the higher Chow torsion assertion has its own boundary, \eqref{eq:chow-order-two}. It is not inferred merely from a torsion morphic image. Nonvanishing and independence are then detected by that image, while the scalar-decomposable subgroup maps to zero. The two-local formality obstruction follows from flat localization of the actual full-indeterminacy quotient; it is not an assertion about reducing the bracket modulo two.


\begin{thebibliography}{99}
\bibitem{Adams}
J.~F.~Adams,
\emph{Stable Homotopy and Generalised Homology},
Chicago Lectures in Mathematics, University of Chicago Press, Chicago, 1974.

\bibitem{Alexandrou}
T.~Alexandrou,
\emph{Torsion in Griffiths groups},
to appear in \emph{Algebraic Geometry}; \arxiv{2303.04083}, version~2, 2025; Theorem~6.1.

\bibitem{AZ}
T.~Alexandrou and L.~Zhou,
\emph{Torsion higher Chow cycles modulo \(\ell\)},
\arxiv{2503.20004}, 2025; Theorem~2.8 and Corollary~6.3.

\bibitem{BauesMuro}
H.-J.~Baues and F.~Muro,
\emph{Toda brackets and cup-one squares for ring spectra},
Comm. Algebra \textbf{37} (2009), no.~1, 56--82.
\doi{10.1080/00927870802241188}.

\bibitem{Bloch}
S.~Bloch,
\emph{Algebraic cycles and higher K-theory},
Adv. Math. \textbf{61} (1986), 267--304.

\bibitem{Catanese}
F.~Catanese,
\emph{Topological methods in moduli theory},
Bull. Math. Sci. \textbf{5} (2015), 287--449.
\doi{10.1007/s13373-015-0070-1}.

\bibitem{DGMS}
P.~Deligne, P.~Griffiths, J.~Morgan, and D.~Sullivan,
\emph{Real homotopy theory of K\"ahler manifolds},
Invent. Math. \textbf{29} (1975), 245--274.

\bibitem{Ekedahl}
T.~Ekedahl,
\emph{Two examples of smooth projective varieties with non-zero Massey products},
in \emph{Algebra, Algebraic Topology and their Interactions} (Stockholm, 1983),
Lecture Notes in Math., vol.~1183, Springer, 1986, pp.~128--132.
\doi{10.1007/BFb0075454}.

\bibitem{EV}
H.~Esnault and E.~Viehweg,
\emph{Lectures on Vanishing Theorems},
DMV Seminar, vol.~20, Birkh\"auser, Basel, 1992.
\url{https://page.mi.fu-berlin.de/esnault/books/esvibuch.pdf}.

\bibitem{Friedlander}
E.~M.~Friedlander,
\emph{Bloch--Ogus properties for topological cycle theory},
Ann. Sci. \'Ecole Norm. Sup. (4) \textbf{33} (2000), 57--79.

\bibitem{FriedlanderFiltrations}
E.~M.~Friedlander,
\emph{Filtrations on algebraic cycles and homology},
Ann. Sci. \'Ecole Norm. Sup. (4) \textbf{28} (1995), no.~3, 317--343.
\doi{10.24033/asens.1716}.

\bibitem{FG}
E.~M.~Friedlander and O.~Gabber,
\emph{Cycle spaces and intersection theory},
in \emph{Topological Methods in Modern Mathematics},
Publish or Perish, 1993, pp.~325--370.

\bibitem{FLcocycles}
E.~M.~Friedlander and H.~B.~Lawson, Jr.,
\emph{A theory of algebraic cocycles},
Ann. of Math. (2) \textbf{136} (1992), 361--428.

\bibitem{FLduality}
E.~M.~Friedlander and H.~B.~Lawson, Jr.,
\emph{Duality relating spaces of algebraic cocycles and cycles},
Topology \textbf{36} (1997), 533--565.

\bibitem{Heller}
J.~Heller,
\emph{Motivic strict ring spectra representing semi-topological cohomology theories},
Homology Homotopy Appl. \textbf{17} (2015), no.~2, 107--135.
\arxiv{1304.6288}.

\bibitem{HSS}
M.~Hovey, B.~Shipley, and J.~Smith,
\emph{Symmetric spectra},
J. Amer. Math. Soc. \textbf{13} (2000), no.~1, 149--208.
\doi{10.1090/S0894-0347-99-00320-3}.

\bibitem{Hu}
W.~Hu,
W.~Hu, \emph{Generalized Abel--Jacobi map on Lawson homology}, Amer. J. Math. \textbf{131} (2009), no.~5, 1241--1260.



\bibitem{HuLi}
W.~Hu and L.~Li,
\emph{Lawson homology, morphic cohomology and Chow motives},
Math. Nachr. \textbf{284} (2011), nos.~8--9, 1024--1047.


\bibitem{Kraines}
D.~Kraines,
\emph{Massey higher products},
Trans. Amer. Math. Soc. \textbf{124} (1966), 431--449.
\doi{10.1090/S0002-9947-1966-0202136-1}.

\bibitem{Lawson}
H.~B.~Lawson, Jr.,
\emph{Algebraic cycles and homotopy theory},
Ann. of Math. (2) \textbf{129} (1989), 253--291.

\bibitem{Massey1958}
W.~S.~Massey,
\emph{Some higher order cohomology operations},
in \emph{Symposium Internacional de Topolog\'ia Algebraica},
Universidad Nacional Aut\'onoma de M\'exico and UNESCO, Mexico City, 1958, pp.~145--154.

\bibitem{MasseyLinking}
W.~S.~Massey,
\emph{Higher order linking numbers},
J. Knot Theory Ramifications \textbf{7} (1998), no.~3, 393--414.
\doi{10.1142/S0218216598000206}.

\bibitem{May}
J.~P.~May,
\emph{Matric Massey products},
J. Algebra \textbf{12} (1969), 533--568.

\bibitem{MaySimplicial}
J.~P.~May,
\emph{Simplicial Objects in Algebraic Topology},
Van Nostrand Mathematical Studies, vol.~11, D.~Van Nostrand, Princeton, 1967.

\bibitem{Schreieder}
S.~Schreieder,
\emph{Infinite torsion in Griffiths groups},
J. Eur. Math. Soc. \textbf{27} (2025), no.~6, 2571--2601.
\doi{10.4171/JEMS/1419}.

\bibitem{SchwedeShipley}
S.~Schwede and B.~Shipley,
\emph{Stable model categories are categories of modules},
Topology \textbf{42} (2003), no.~1, 103--153.
\doi{10.1016/S0040-9383(02)00006-X}.

\bibitem{Shipley}
B.~Shipley,
\emph{\(H\Z\)-algebra spectra are differential graded algebras},
Amer. J. Math. \textbf{129} (2007), no.~2, 351--379.
\arxiv{math/0209215}.

\bibitem{Sullivan}
D.~Sullivan,
\emph{Infinitesimal computations in topology},
Publ. Math. Inst. Hautes \'Etudes Sci. \textbf{47} (1977), 269--331.
\doi{10.1007/BF02684341}.

\bibitem{Toda}
H.~Toda,
\emph{Composition Methods in Homotopy Groups of Spheres},
Annals of Mathematics Studies, vol.~49,
Princeton University Press, 1962.

\bibitem{Totaro}
B.~Totaro,
\emph{Complex varieties with infinite Chow groups modulo 2},
Ann. of Math. (2) \textbf{183} (2016), 363--375.
\arxiv{1502.02071}.
\end{thebibliography}
\end{document}